\newcommand{\Xm} {\ensuremath{X}}
\newcommand{\Ym} {\ensuremath{y}}
\newcommand{\Strue} {\ensuremath{\mathcal{S}}}
\newcommand{\Struec} {\ensuremath{\mathcal{S}^c}}
\newcommand{\Shat} {\ensuremath{\widehat{\mathcal{S}}_d}}
\newcommand{\numb}{\ensuremath{t}}
\newcommand{\betamin}{\ensuremath{\beta_{\min}}}
\newcommand{\iter}{\ensuremath{l}}
\newcommand{\Sminus}{\ensuremath{{\Strue}^c}}
\newcommand\eqb{\stackrel{\mathclap{\normalfont\mbox{(b)}}}{=}}
\newcommand\eqd{\stackrel{\mathclap{\normalfont\mbox{(d)}}}{=}}
\newcommand\leqc{\stackrel{\mathclap{\normalfont\mbox{(c)}}}{\leq}}
\newcommand\geqa{\stackrel{\mathclap{\normalfont\mbox{(a)}}}{\geq}}
\newcommand{\Gn}{\ensuremath{\mathcal{G}_{\eta}}}
\newcommand{\Gp}{\ensuremath{\mathcal{G}_{p}}}
\newcommand{\ja}{\ensuremath{j_1}}
\newcommand{\jb}{\ensuremath{j_2}}
\newcommand{\Gubar}{\ensuremath{\mathcal{G}_{v}^{'}}}
\DeclarePairedDelimiter\ceil{\lceil}{\rceil}
\mathchardef\mhyphen="2D 
\newtheorem{lemma}{Lemma}
\newtheorem{theorem}{Theorem}
\newtheorem{definition}{Definition}
\newtheorem{corollary}{Corollary}[theorem]
\newcommand{\todel}[1] {}
\newcommand{\toedit}[1]{{#1}}
\begin{document}
\title{ExSIS: Extended Sure Independence Screening for Ultrahigh-dimensional Linear Models}
\author{Talal Ahmed and Waheed U. Bajwa%
\thanks{This work is supported in part by the National Science Foundation under awards CCF-1525276 and CCF-1453073, and by the Army Research Office under award W911NF-14-1-0295. \toedit{Some of the results reported here were presented at IEEE Int. Workshop on Comput. Advances in Multi-Sensor Adaptive Process., 2017~\cite{ahmed2017correlation}}. The authors are with the Department of Electrical and Computer Engineering, Rutgers, The State University of New Jersey, 94 Brett Road, Piscataway, NJ 08854, USA. (Emails: {\tt talal.ahmed@rutgers.edu} and {\tt
waheed.bajwa@rutgers.edu}).}
}

\maketitle
\begin{abstract}
Statistical  inference can be computationally prohibitive in ultrahigh-dimensional linear models. Correlation-based variable screening, in which one leverages marginal correlations for removal of irrelevant variables from the model prior to statistical inference, can be used to overcome this challenge. Prior works on correlation-based variable screening either impose statistical priors on the linear model or assume specific post-screening inference methods. This paper first extends the analysis of correlation-based variable screening to arbitrary linear models and post-screening inference techniques. In particular, ($i$) it shows that a condition---termed the screening condition---is sufficient for successful correlation-based screening of linear models, and ($ii$) it provides insights into the dependence of marginal correlation-based screening on different problem parameters. Numerical experiments confirm that these insights are not mere artifacts of analysis; rather, they are reflective of the challenges associated with marginal correlation-based variable screening. Second, the paper explicitly derives the screening condition for \todel{two families of linear models, namely, sub-Gaussian linear models and arbitrary (random or deterministic) linear models}\toedit{arbitrary (random or deterministic) linear models and,} \todel{sub-Gaussian linear models}in the process, it establishes that---under appropriate conditions---it is possible to reduce the dimension of an ultrahigh-dimensional, arbitrary linear model to almost the sample size even when the number of active variables scales almost linearly with the sample size. \toedit{Third, it specializes the screening condition to sub-Gaussian linear models and contrasts the final results to those existing in the literature. This specialization formally validates the claim that the main result of this paper generalizes existing ones on correlation-based screening.} 
\end{abstract}
\emph{Keywords}: Variable selection; Variable screening; Sure screening; Linear models; High-dimensional statistics; Sparse signal processing

\section{Introduction}
\label{sec:Introduction}
The ordinary linear model $y = X\beta + \text{noise}$, despite its apparent simplicity, has been the bedrock of signal processing, statistics, and machine learning for decades. The last decade, however, has witnessed a marked transformation of this model: instead of the classical low-dimensional setting in which the dimension, $n$, of $y$ (henceforth, referred to as the sample size) exceeds the dimension, $p$, of $\beta$ (henceforth, referred to as the number of features/predictors/variables), we are increasingly having to operate in the  high-dimensional setting in which the number of variables far exceeds the sample size (i.e., $p \gg n$). While the high-dimensional setting should ordinarily lead to ill-posed problems, the \emph{principle of parsimony}---which states that only a small number of variables typically affect the response $y$---\todel{helps obtain unique solutions to inference problems based on high-dimensional linear models}\toedit{has been employed in the literature to obtain unique solutions for myriad inference problems involving high-dimensional linear models. These inference problems range from sparse regression~\cite{tibshirani1996regression} to sparse representations~\cite{donoho2003optimally} and sparse signal recovery~\cite{candes2008introduction}, with guarantees provided in terms of various properties of $X$ such as \emph{spark}~\cite{donoho2003optimally}, \emph{mutual coherence}~\cite{donoho2003optimally}, \emph{null-space property}~\cite{gribonval2003sparse, cohen2009compressed} \emph{restricted isometry property}~\cite{candes2006robust, candes2008restricted}, etc.}

Our focus in this paper is on \emph{ultrahigh-dimensional} linear models, in which the number of variables can scale exponentially with the sample size: $\log{p} = \mathcal{O}(n^\alpha)$ for $\alpha \in (0,1)$.\footnote{Recall Landau's \emph{big-$\mathcal{O}$} notation: $f(n) = \mathcal{O}(g(n))$ if $\exists C > 0: f(n) \leq C g(n)$ and $f(n) = \Omega(g(n))$ if $g(n) = \mathcal{O}(f(n))$.} Such linear models are increasingly becoming common in application areas ranging from genomics~\cite{golub1999molecular, huang2003linear, helleputte2009partially, stingo2011incorporating} and proteomics~\cite{yasui2003data, clarke2008properties, nesvizhskii2010survey} to sentiment analysis~\cite{pang2008opinion, liu2012sentiment, feldman2013techniques} and hyperspectral imaging~\cite{landgrebe2002hyperspectral, plaza2009recent, bioucas2012hyperspectral}. While there exist a number of techniques in the literature---such as forward selection/matching pursuit, backward elimination~\cite{james2013introduction}, least absolute shrinkage and selection operator (LASSO) \cite{tibshirani1996regression}, elastic net \cite{zou2005regularization}, bridge regression \cite{fu1998penalized, huang2008asymptotic}, adaptive LASSO \cite{zou2006adaptive}, group LASSO \cite{yuan2006model}, and Dantzig selector \cite{candes2007dantzig}---that can be employed for inference from high-dimensional linear models, all these techniques have super-linear (in the number of variables $p$) computational complexity. In the ultrahigh-dimensional setting, therefore, use of the aforementioned methods for statistical inference can easily become computationally prohibitive. Variable selection-based dimensionality reduction, commonly referred to as \emph{variable screening}, has been put forth as a practical means of overcoming this \emph{curse of dimensionality}~\cite{donoho2000high}: since only a small number of (independent) variables actually contribute to the response (dependent variable) in the ultrahigh-dimensional setting, one can first---in principle---discard most of the variables (the screening step) and then carry out inference on a relatively low-dimensional linear model using any one of the sparsity-promoting techniques \toedit{(the inference step). In this work, our focus is on the former step, i.e., the screening step.} There are two main challenges that arise in the context of variable screening in ultrahigh-dimensional linear models. First, the screening algorithm should have low computational complexity (ideally, $\mathcal{O}(np)$). Second, the screening algorithm should be accompanied with mathematical guarantees that ensure the reduced linear model contains \emph{all} relevant variables that affect the response. Our goal in this paper is to revisit one of the simplest screening algorithms, which uses marginal correlations between the variables $\{X_i\}_{i=1}^p$ and the response $y$ for screening purposes~\cite{donoho2006most, genovese2012}, and provide a comprehensive theoretical understanding of its screening performance for arbitrary (random or deterministic) ultrahigh-dimensional linear models.

\subsection{Relationship to Prior Work}
Researchers have long intuited that the (absolute) marginal correlation $|X_i^\top y|$ is a strong indicator of whether the $i$-th variable contributes to the response variable. Indeed, methods such as stepwise forward regression are based on this very intuition. It is only recently, however, that we have obtained a rigorous understanding of the role of marginal correlations in variable screening. One of the earliest screening works in this regard that is agnostic to the choice of the subsequent inference techniques is termed \emph{sure independence screening} (SIS)~\cite{fan2008sure}. SIS is based on simple thresholding of marginal correlations and satisfies the so-called \emph{sure screening} property---which guarantees that all important variables survive the screening stage with high probability---for the case of normally distributed variables. An iterative variant of SIS, termed ISIS, is also discussed in~\cite{fan2008sure}, while~\cite{fan2009ultrahigh} presents variants of SIS and ISIS that can lead to reduced false selection rates of the screening stage. Extensions of SIS to generalized linear models are discussed in~\cite{fan2009ultrahigh,fan2010sure}, while its generalizations for semi-parametric (Cox) models and non-parametric models are presented in~\cite{fan2011nonparametric,fan2014nonparametric} and \cite{fan2010high,zhao2012principled}, respectively.

The marginal correlation $|X_i^\top y|$ can be considered an empirical measure of the Pearson correlation coefficient, which is a natural choice for discovering linear relations between the independent variables and the response. In order to perform ultrahigh-dimensional variable screening in the presence of non-linear relations between $X_i$'s and $y$ and/or heavy-tailed variables,~\cite{hall2009} and~\cite{li2012robust} have put forth screening using generalized (empirical) correlation and Kendall $\tau$ rank correlation, respectively.

The defining characteristics of the works referenced above is that they are agnostic to the inference technique that follows the screening stage. In recent years, screening methods have also been proposed for specific optimization-based inference techniques. To this end,~\cite{ghaoui2010safe} formulates a marginal correlations-based screening method, termed SAFE, for the LASSO problem. \toedit{The goal in SAFE is to discard features that LASSO will not select, and ~\cite{ghaoui2010safe} shows that it} \todel{and shows that SAFE}results in zero false selection rate. In~\cite{tibshirani2012strong}, the so-called \emph{strong rules} for variable screening in LASSO-type problems are proposed that are still based on marginal correlations and that result in discarding of far more variables than the SAFE method. The screening tests of~\cite{ghaoui2010safe,tibshirani2012strong} for the LASSO problem are further improved in \cite{xiang2012fast,dai2012ellipsoid,wang2013lasso} by analyzing the dual of the LASSO problem. We refer the reader to~\cite{xiang2014screening} for an excellent review of these different screening tests for LASSO-type problems.

Notwithstanding these prior works, we have holes in our understanding of variable screening in ultrahigh-dimensional linear models. Works such as~\cite{ghaoui2010safe,tibshirani2012strong,xiang2012fast,dai2012ellipsoid,wang2013lasso} necessitate the use of LASSO-type inference techniques after the screening stage. In addition, these works do not help us understand the relationship between the problem parameters and the dimensions of the reduced model. Stated differently, it is difficult to a priori quantify the computational savings associated with the screening tests proposed in~\cite{ghaoui2010safe,tibshirani2012strong,xiang2012fast,dai2012ellipsoid,wang2013lasso}. Similar to~\cite{fan2008sure,fan2009ultrahigh,hall2009,li2012robust}, and in contrast to~\cite{ghaoui2010safe,tibshirani2012strong,xiang2012fast,dai2012ellipsoid,wang2013lasso}, our focus in this paper is on screening that is agnostic to the post-screening inference technique. To this end,~\cite{hall2009} lacks a rigorous theoretical understanding of variable screening using the generalized correlation. While~\cite{fan2008sure,fan2009ultrahigh,li2012robust} overcome this shortcoming of~\cite{hall2009}, these works have two major limitations. First, their results are derived under the assumption of\todel{restrictive} statistical priors on the linear model (e.g., normally distributed $X_i$'s). In many applications, however, it can be a challenge to ascertain the distribution of the independent variables. Second, the analyses in~\cite{fan2008sure,fan2009ultrahigh,li2012robust} assume the variance of the response variable to be bounded by a constant; this assumption, in turn, imposes the condition $\|\beta\|_2 = \mathcal{O}(1)$. In contrast, defining $\betamin := \min_i |\beta_i|$, we establish in the sequel that the ratio $\frac{\betamin}{\|\beta\|_2}$ (and not $\|\beta\|_2$) directly influences the performance of marginal correlation-based screening procedures.

\subsection{Our Contributions}
Our focus in this paper is on marginal correlation-based screening of ultrahigh-dimensional linear models that is agnostic to the post-screening inference technique. To this end, we provide an extended analysis of the thresholding-based SIS procedure of \cite{fan2008sure}. The resulting screening procedure, which we term \emph{extended sure independence screening} (ExSIS), provides new insights into marginal correlation-based screening of arbitrary (random or deterministic) ultrahigh-dimensional linear models. Specifically, we first provide a simple, distribution-agnostic sufficient condition---termed the \emph{screening condition}---for (marginal correlation-based) screening of linear models. This sufficient condition, which succinctly captures joint interactions among both the active and the inactive variables, is then leveraged to explicitly characterize the performance of ExSIS as a function of various problem parameters, including noise variance, the ratio $\frac{\betamin}{\|\beta\|_2}$, and model sparsity. The numerical experiments reported at the end of this paper confirm that the dependencies highlighted in this screening result are reflective of the actual challenges associated with marginal correlation-based screening and are not mere artifacts of our analysis.

Next, despite the theoretical usefulness of the screening condition, it cannot be explicitly verified in polynomial time for any given linear model. This is reminiscent of related conditions such as the \emph{incoherence condition}~\cite{wainwright2009sharp}, the \emph{irrepresentable condition}~\cite{zhao2006model}, the \emph{restricted isometry property}~\cite{candes2008restricted}, and the \emph{restricted eigenvalue condition}~\cite{raskutti2010restricted} studied in the literature on high-dimensional linear models. In order to overcome this limitation of the screening condition, we explicitly derive it for \todel{two families}\toedit{a family} of linear models \todel{The first family corresponds to sub-Gaussian linear models, in which the independent variables are independently drawn from (possibly different) sub-Gaussian distributions. We show that the ExSIS results for this family of linear models generalize the SIS results derived in~\mbox{\cite{fan2008sure}} for normally distributed linear models. The second first family}\toedit{that} corresponds to arbitrary (random or deterministic) linear models in which the (empirical) correlations between independent variables satisfy certain polynomial-time verifiable conditions. The ExSIS results for this family of linear models establish that, under appropriate conditions, it is possible to reduce the dimension of an ultrahigh-dimensional linear model to almost the sample size even when the number of active variables scales almost linearly with the sample size. This, to the best of our knowledge, is the first screening result that provides such explicit and optimistic guarantees \emph{without} imposing a statistical prior on the distribution of the independent variables. 

\toedit{Finally, in order to highlight the generalizability of our results and their explicit relationship to existing results}, we also derive the screening condition for a family of sub-Gaussian linear models in which the independent variables are independently drawn from (possibly different) sub-Gaussian distributions. We show that the ExSIS results for this family of linear models \toedit{match}\todel{coincide} with the SIS results derived in~\cite{fan2008sure} for the case of normally distributed linear models. \toedit{This validates}\todel{, and thus they corroborate} the generality of the screening condition in relation to existing works.

\subsection{Notation and Organization}
The following notation is used throughout this paper. Lower-case letters are used to denote scalars and vectors, while upper-case letters are used to denote matrices. Given $a \in \mathbb{R}$, $\lceil a\rceil$ denotes the smallest integer greater than or equal to $a$. Given $q \in \mathbb{Z}_+$, we use $[[q]]$ as a shorthand for $\{1,\dots,q\}$. Given a vector $v$, $\|v\|_p$ denotes its $\ell_p$ norm. Given a matrix $A$, $A_j$ denotes its $j$-th column and $A_{i,j}$ denotes the entry in its $i$-th row and $j$-th column. Further, given a set $\mathcal{I} \subset \mathbb{Z}_+$, $A_{\mathcal{I}}$ (resp., $v_{\mathcal{I}}$) denotes a submatrix (resp., subvector) obtained by retaining columns of $A$ (resp., entries of $v$) corresponding to the indices in $\mathcal{I}$. Finally, the superscript $(\cdot)^{\top}$ denotes the transpose operation.

The rest of this paper is organized as follows. We formulate the problem of marginal correlation-based screening in Sec.~\ref{sec:ProblemFormulation}. Next, in Sec.~\ref{sec:GeneralConditions}, we define the screening condition and present one of our main results that establishes the screening condition as a sufficient condition for ExSIS. \todel{In Sec.~\ref{sec:RandomMatrices}, we derive the screening condition for sub-Gaussian linear models and discuss the resulting ExSIS guarantees in relation to prior work.} In Sec.~\ref{sec:ArbitraryMatrices}, we derive the screening condition for arbitrary linear models based on the correlations between independent variables and discuss implications of the derived ExSIS results. \toedit{In Sec.~\ref{sec:RandomMatrices}, we derive the screening condition for sub-Gaussian linear models and discuss the resulting ExSIS guarantees in relation to prior work.} Finally, results of extensive numerical experiments on both synthetic and real data are reported in Sec.~\ref{sec:Results}, while concluding remarks are presented in Sec.~\ref{sec:Conclusion}. 
\section{Problem Formulation}
\label{sec:ProblemFormulation}
Our focus in this paper is on the ultrahigh-dimensional ordinary linear model $y = X \beta + \eta$, where $y\in \mathbb{R}^n$, $X \in \mathbb{R}^{n \times p}$, and $\log{p} = \mathcal{O}(n^\alpha)$ for $\alpha \in (0,1)$. In the \todel{statistics}literature, $X$ is referred to as data/design/observation/sensing matrix with the rows of $X$ corresponding to individual observations and the columns of $X$ corresponding to individual features/predictors/variables, $y$ is referred to as \toedit{measurement}/observation/response vector with individual responses given by $\{y_i\}_{i=1}^{n}$, $\beta$ is referred to as the parameter vector, and $\eta$ is referred to as modeling error or \toedit{measurement}/observation noise. Throughout this paper, we assume $X$ has unit $\ell_2\mhyphen$norm columns, $\beta \in \mathbb{R}^{p}$ is $k\mhyphen$sparse with $k < n$ (i.e., $\big|\{i \in [[p]]:\beta_i \not= 0\}\big| = k < n$), and $\eta \in \mathbb{R}^p$ is a zero-mean Gaussian vector with (entry-wise) variance $\sigma^2$ and covariance $C_\eta = \sigma^2 I$. Here, $\eta$ is taken to be Gaussian with covariance $\sigma^2 I$ for the sake of this exposition, but our analysis is trivially generalizable to other noise distributions and/or covariance matrices. Further, we make no a priori assumption on the distribution of $X$. Finally, we define $\Strue := \{i \in [[p]]:\beta_i \not= 0\}$ to be the set that indexes the non-zero components of $\beta$. Using this notation, the linear model can equivalently be expressed as
\begin{align}
\Ym = \Xm \beta + \eta = X_{\Strue} \beta_{\Strue} + \eta \text{.}
\label{eq:sys}
\end{align}

Given \eqref{eq:sys}, the goal of variable screening is to reduce the number of variables in the linear model from $p$ (since $p \ggg n$) to a moderate scale $d$ (with $d \lll p$) using a fast and efficient method. Our focus here is in particular on screening methods that satisfy the so-called \emph{sure screening} property~\cite{fan2008sure}; specifically, a method is said to carry out sure screening if the $d\mhyphen$dimensional model returned by it is guaranteed with high probability to retain all the columns of $X$ that are indexed by $\Strue$. The motivation here is that once one obtains a moderate-dimensional model through sure screening of \eqref{eq:sys}, one can use computationally intensive model selection, regression and estimation techniques on the $d\mhyphen$dimensional model for reliable model selection (identification of $\Strue$), prediction (estimation of $X \beta$), and reconstruction (estimation of $\beta$), respectively.

In this paper, we study sure screening using marginal correlations between the response vector and the columns of $X$. The resulting screening procedure is outlined in Algorithm~\ref{algo:correlation_screening}, which is based on the principle that the higher the correlation of a column of $X$ with the response vector, the more likely it is that the said column contributes to the response vector (i.e., it is indexed by the set $\Strue$).
\begin{algorithm}[!h]
\algsetup{indent=1em}
\begin{algorithmic}[1]
\STATE \textbf{Input:} Design matrix $X \in \mathbb{R}^{n\times p}$, response vector $y\in \mathbb{R}^{n}$, and $d\in\mathbb{Z}_{+}$
\STATE \textbf{Output:} $\Shat \subset [[p]]$ such that $|\Shat|=d$
\STATE $w \gets X^\top y$
\STATE $\Shat \gets \{ i \in [[p]] : |w_i| \text{ is among the $d$ largest of all marginal correlations} \}$
\caption{Marginal Correlation-based Independence Screening}
\label{algo:correlation_screening}
\end{algorithmic}
\end{algorithm}

The computational complexity of Algorithm~\ref{algo:correlation_screening} is only $\mathcal{O}(np)$ and its ability to screen ultrahigh-dimensional linear models has been investigated in recent years by a number of researchers~\cite{donoho2006most, genovese2012}. The fundamental difference among these works stems from the manner in which the parameter $d$ (the dimension of the screened model) is computed from \eqref{eq:sys}. Our goal in this paper is to provide an extended understanding of the screening performance of Algorithm~\ref{algo:correlation_screening} for \emph{arbitrary} (random or deterministic) design matrices. The term \emph{sure independence screening} (SIS) was coined in~\cite{fan2008sure} to refer to screening of ultrahigh-dimensional \emph{sub-Gaussian} linear models using Algorithm~\ref{algo:correlation_screening}.\footnote{\toedit{The analysis in \cite{fan2008sure} requires a certain concentration property that has only been shown in \cite{fan2008sure} to hold for Gaussian matrices. However, this property can also be proved for sub-Gaussian matrices by appealing to matrix concentration inequalities; see, e.g., \cite{rudelson2009smallest}.}} In this vein, we refer to variable screening using Algorithm~\ref{algo:correlation_screening} and the analysis of this paper as \emph{extended sure independence screening} (ExSIS). The main research challenge for ExSIS is specification of $d$ for arbitrary matrices such that $\Strue \subset \Shat$ with high probability. Note that there is an inherent trade-off in addressing this challenge: the higher the value of $d$, the more likely is $\Shat$ to satisfy the sure screening property; however, the smaller the value of $d$, the lower the computational cost of performing model selection, regression, estimation, etc., on the $d\mhyphen$dimensional problem. This leads us to the following research questions for ExSIS: ($i$) What are the conditions on $X$ under which $\Strue \subset \Shat$? ($ii$) How small can $d$ be for arbitrary matrices such that $\Strue \subset \Shat$? ($iii$) What are the constraints on the sparsity parameter $k$ under which $\Strue \subset \Shat$? Note that there is also an interplay between the sparsity level $k$ and the allowable value of $d$ for sure screening: the lower the sparsity level, the easier it should be to screen a larger number of columns of $X$. Thus, an understanding of ExSIS also requires characterization of this relationship between $k$ and $d$ for marginal correlation-based screening. In the sequel, we not only address the aforementioned questions for ExSIS, but also characterize this relationship.

\section{Sufficient Conditions for Sure Screening}
\label{sec:GeneralConditions}
In this section, we derive the most general sufficient conditions for ExSIS of ultrahigh-dimensional linear models. The results reported in this section provide important insights into the workings of ExSIS \emph{without} imposing any statistical priors on $X$ and $\beta$. We begin with a definition of the \emph{screening condition} for the design matrix $X$.
\begin{definition}[$(k, b)\mhyphen$Screening Condition]
Fix an arbitrary $\beta \in \mathbb{R}^{p}$ that is $k\mhyphen$sparse. The (normalized) matrix $X$ satisfies the $(k, b)\mhyphen$screening condition if there exists $0<b(n,p)<\frac{1}{\sqrt{k}}$ such that the following hold:
\begin{align*}
\max \limits_{i\in \Strue} | \sum \limits_{\mathclap{\substack{j\in \Strue \\ j \neq i}}} X_i^{\top} \Xm_{j} \beta_j | \leq b(n,p) \|\beta \|_2 \text{,} \tag{SC-1}\label{eqn:SC-1}
\nonumber\\
\max \limits_{i\in \Sminus} |\sum \limits_{j\in \Strue} X_i^{\top} \Xm_{j} \beta_j | \leq b(n,p) \|\beta \|_2 \text{.} \tag{SC-2}\label{eqn:SC-2}
\end{align*}
\label{def:sc1}
\end{definition}
\noindent The screening condition is a statement about the collinearity of the independent variables in the design matrix. The parameter $b(n,p)$ in the screening condition captures the similarity between ($i$) the columns of $X_{\Strue}$, and ($ii$) the columns of $X_{\Strue}$ and $X_{\Struec}$; the smaller the parameter $b(n,p)$ is, the less similar the columns are. Furthermore, since $k<(b(n,p))^{-2}$ in the screening condition, the parameter $b(n,p)$ reflects constraints on the sparsity parameter $k$.

We now present one of our main screening results for arbitrary design matrices, which highlights the significance of the screening condition and the role of the parameter $b(n,p)$ within ExSIS.
\begin{theorem}[Sufficient Conditions for ExSIS]
Let $y=X\beta+\eta$ with $\beta$ a $k\mhyphen$sparse vector and the entries of $\eta$ independently distributed as $\mathcal{N}(0,\sigma^2)$. Define $\betamin := \min \limits_{i\in\Strue} |\beta_i|$ and $\tilde{\eta} := X^\top \eta$, and let $\Gn$ be the event $\{\|\tilde{\eta}\|_{\infty} \leq 2 \sqrt{\sigma^2 \log p} \}$. Suppose $X$ satisfies the screening condition and assume $\frac{\betamin}{\|\beta\|_2} > 2 b(n,p) + 4 \frac{\sqrt{\sigma^2 \log p}}{\|\beta\|_2}$. Then, conditioned on $\Gn$, Algorithm 1 satisfies $\Strue \subset \Shat$ as long as $d \geq \ceil*{\frac{\sqrt{k}}{\frac{\betamin}{\|\beta\|_2} - 2 b(n,p) - \frac{4 \sqrt{\sigma^2 \log p}}{\|\beta\|_2}}}$.
\label{th:main_result_gen}
\end{theorem}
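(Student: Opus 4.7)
The plan is to analyze the marginal correlation vector $w = X^\top y = X^\top X \beta + \tilde{\eta}$ coordinate by coordinate and exhibit a strict magnitude gap between entries indexed by $\Strue$ and those indexed by $\Sminus$. Using $\|X_i\|_2 = 1$ and the $k$-sparsity of $\beta$, I would write, for $i \in \Strue$,
\begin{equation*}
w_i = \beta_i + \sum_{\substack{j \in \Strue \\ j \neq i}} X_i^\top X_j \beta_j + \tilde{\eta}_i,
\end{equation*}
and, for $i \in \Sminus$,
\begin{equation*}
w_i = \sum_{j \in \Strue} X_i^\top X_j \beta_j + \tilde{\eta}_i.
\end{equation*}
Applying the reverse triangle inequality, then invoking (SC-1) and (SC-2) to bound the cross-term sums by $b(n,p)\|\beta\|_2$ and the event $\Gn$ to bound $|\tilde{\eta}_i|$ by $2\sqrt{\sigma^2 \log p}$, would yield the uniform estimates $\min_{i \in \Strue} |w_i| \geq L := \betamin - b(n,p)\|\beta\|_2 - 2\sqrt{\sigma^2 \log p}$ and $\max_{i \in \Sminus} |w_i| \leq U := b(n,p)\|\beta\|_2 + 2\sqrt{\sigma^2 \log p}$.

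The second step is to observe that the hypothesis $\frac{\betamin}{\|\beta\|_2} > 2b(n,p) + \frac{4\sqrt{\sigma^2\log p}}{\|\beta\|_2}$ is algebraically equivalent to $L > U$. Thus, on $\Gn$, every active marginal correlation strictly dominates every inactive one in absolute value, so the top $k$ entries of $|w|$ are precisely those indexed by $\Strue$, which gives $\Strue \subset \Shat$ for any $d \geq k$.

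The final step is to check that the stated lower bound on $d$ implies $d \geq k$. Since $\|\beta\|_2^2 \geq k\,\betamin^2$, one has $\betamin/\|\beta\|_2 \leq 1/\sqrt{k}$, so the denominator inside the ceiling is at most $1/\sqrt{k}$ and the quotient is at least $\sqrt{k}\cdot\sqrt{k} = k$, which the ceiling preserves. I expect no serious analytic obstacle here: the only probabilistic input is the event $\Gn$, which is given by assumption. The main (mild) subtlety is lining up the constraint $b(n,p) < 1/\sqrt{k}$ built into the screening condition with the pigeonhole bound $\betamin/\|\beta\|_2 \leq 1/\sqrt{k}$, so that both the magnitude gap $L > U$ and the implication ``theorem's $d$-bound $\Rightarrow d \geq k$'' emerge cleanly from the same $1/\sqrt{k}$ scale, matching the form in which the theorem is stated.
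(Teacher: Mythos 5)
Your proof is correct, and it takes a genuinely different---and more elementary---route than the paper's. You argue by direct separation: on $\Gn$, condition \eqref{eqn:SC-1} together with the unit-norm columns gives $\min_{i\in\Strue}|w_i|\geq L:=\betamin-b(n,p)\|\beta\|_2-2\sqrt{\sigma^2\log p}$, condition \eqref{eqn:SC-2} gives $\max_{i\in\Struec}|w_i|\leq U:=b(n,p)\|\beta\|_2+2\sqrt{\sigma^2\log p}$, and the MSR hypothesis is algebraically exactly $L>U$; hence the $k$ largest marginal correlations (in absolute value) are precisely those indexed by $\Strue$, any $d\geq k$ succeeds, and the stated lower bound on $d$ implies $d\geq k$ because $\betamin/\|\beta\|_2\leq 1/\sqrt{k}$ forces the denominator inside the ceiling to be at most $1/\sqrt{k}$. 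The paper instead runs an iterative peeling argument: Lemma~\ref{lemma:frac0} bounds the number $t_i$ of coordinates with $|w_j|\geq\min_{i\in\Strue}|w_i|$ by a Markov-type ratio of an $\ell_1$ upper bound on $\sum_j |w_j|$ to that threshold, Lemma~\ref{lemma:gen_one_iter_stoc} checks that this bound strictly decreases, and the recursion is iterated until the surviving model size drops below $\sqrt{k}$ divided by the theorem's denominator. Your route avoids both lemmas and in fact yields the strictly sharper conclusion that $d\geq k$ already suffices under the stated hypotheses; the paper's $\ell_1$-aggregation is lossy precisely because it does not exploit that \eqref{eqn:SC-2} prevents \emph{every} inactive coordinate from crossing the threshold $L$. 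The only thing the paper's counting argument would buy is tolerance of a weakened, aggregate version of \eqref{eqn:SC-2}, but with the screening condition as actually stated nothing is lost by your approach. One small point to make explicit in a final write-up: $L>U$ is a strict inequality, so no tie can displace an element of $\Strue$ from the top $d$ in Algorithm~\ref{algo:correlation_screening}.
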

We refer the reader to Sec.~\ref{ssec:proof.thm.mainresult} for a proof of this theorem.

\subsection{Discussion}\label{ssec:gen.cond.disc}
Theorem~\ref{th:main_result_gen} highlights the dependence of ExSIS on the observation noise, the ratio $\frac{\betamin}{\|\beta\|_2}$, the parameter $b(n,p)$, and model sparsity. We first comment on the relationship between ExSIS and observation noise $\eta$. Notice that the statement of Theorem \ref{th:main_result_gen} is dependent upon the event $\Gn$. However, for any $\epsilon>0$, we have (see, e.g., \cite[Lemma~6]{bajwa2010gabor})
\begin{align}
\Pr(\|\tilde{\eta}\|_{\infty} \geq \sigma \epsilon) < \frac{4 p}{\epsilon \sqrt{2 \pi}} \exp \Big( - \frac{\epsilon^2}{2} \Big)\text{.}
\label{eq:eta}
\end{align}
Therefore, substituting $\epsilon = 2 \sqrt{\log p}$ in \eqref{eq:eta}, we obtain
\begin{align}
\Pr( \Gn ) \geq 1 - 2 (p \sqrt{2 \pi \log p})^{-1} \text{.}
\label{eq:eta2.final}
\end{align}
Thus, Algorithm~\ref{algo:correlation_screening} possesses the sure screening property in the case of the observation noise $\eta$ distributed as $\mathcal{N}(0,\sigma^2 I)$. We further note from the statement of Theorem~\ref{th:main_result_gen} that the higher the \emph{signal-to-noise ratio} (SNR), defined here as $\text{SNR} := \frac{\|\beta\|_2}{\sigma}$, the more Algorithm~\ref{algo:correlation_screening} can screen irrelevant/inactive variables. It is also worth noting here trivial generalizations of Theorem~\ref{th:main_result_gen} for other noise distributions. In the case of $\eta$ distributed as $\mathcal{N}(0,C_\eta)$, Theorem~\ref{th:main_result_gen} has $\sigma^2$ replaced by the largest eigenvalue of the covariance matrix $C_\eta$. In the case of $\eta$ following a non-Gaussian distribution, Theorem~\ref{th:main_result_gen} has $2 \sqrt{\sigma^2 \log p}$ replaced by distribution-specific upper bound on $\|X^\top \eta\|_{\infty}$ that holds with high probability.

In addition to the noise distribution, the performance of ExSIS also seems to be impacted by the \emph{minimum-to-signal ratio} (MSR), defined here as $\text{MSR} := \frac{\betamin}{\|\beta\|_2} \in \big(0,\frac{1}{\sqrt{k}}\big]$. Specifically, the higher the MSR, the more Algorithm~\ref{algo:correlation_screening} can screen inactive variables. Stated differently, the independent variable with the weakest contribution to the response determines the size of the screened model. Finally, the parameter $b(n,p)$ in the screening condition also plays a central role in characterization of the performance of ExSIS. First, the smaller the parameter $b(n,p)$, the more Algorithm~\ref{algo:correlation_screening} can screen inactive variables. Second, the smaller the parameter $b(n,p)$, the more independent variables can be active in the original model; indeed, we have from the screening condition that $k<(b(n,p))^{-2}$. Third, the smaller the parameter $b(n,p)$, the lower the smallest allowable value of MSR; indeed, we have from the theorem statement that $\text{MSR} > 2 b(n,p) + 4 \frac{\sqrt{\sigma^2 \log p}}{\|\beta\|_2}$.

It is evident from the preceding discussion that the screening condition (equivalently, the parameter $b(n,p)$) is one of the most important factors that helps understand the workings of ExSIS and helps quantify its performance. Unfortunately, the usefulness of this knowledge is limited in the sense that the screening condition cannot be utilized in practice. Specifically, the screening condition is defined in terms of the set $\Strue$, which is of course unknown. We overcome this limitation of Theorem~\ref{th:main_result_gen} by implicitly deriving the screening condition for \todel{sub-Gaussian design matrices in Sec.~\mbox{\ref{sec:RandomMatrices}} and for}a class of arbitrary (random or deterministic) design matrices in Sec.~\ref{sec:ArbitraryMatrices} and sub-Gaussian design matrices in Sec.~\ref{sec:RandomMatrices}.

\subsection{Proof of Theorem~\ref{th:main_result_gen}}\label{ssec:proof.thm.mainresult}

We first provide an outline of the proof of Theorem \ref{th:main_result_gen}, which is followed by its formal proof. Define $p_0 := p$, $\widehat{\mathcal{S}}_{p_0} := [[p]]$, and $\numb_1 := | \{ j \in \widehat{\mathcal{S}}_{p_0}: |w_j| \geq \underset{i \in \Strue} \min | w_i |  \} |$. Next, fix a positive integer $p_1 < p_0$ and define
$$\widehat{\mathcal{S}}_{p_1} := \{ i \in \widehat{\mathcal{S}}_{p_0} : |w_i| \text{ is among the $p_1$ largest of all marginal correlations} \} \text{.}$$
The idea is to first derive an initial upper bound on $\numb_1$, denoted by $\bar{\numb}_1$, and then choose $p_1=\ceil*{\bar{\numb}_1}$; trivially, we have $\Strue \subset \widehat{\mathcal{S}}_{p_1} \subset \widehat{\mathcal{S}}_{p_0}$. As a result, we get
\begin{align}
y = X \beta + \eta
 = X_{\widehat{\mathcal{S}}_{p_0}} \beta_{\widehat{\mathcal{S}}_{p_0}} + \eta
 = X_{\widehat{\mathcal{S}}_{p_1}} \beta_{\widehat{\mathcal{S}}_{p_1}} + \eta \text{.}
\label{eq:sysp1}
\end{align}
Note that while deriving $\bar{\numb}_1$, we need to ensure $\bar{\numb}_1 < p_0$; this in turn imposes some conditions on $X$ that also need to be specified. Next, we can repeat the aforementioned steps to obtain $\widehat{\mathcal{S}}_{p_2}$ from $\widehat{\mathcal{S}}_{p_1}$ for a fixed positive integer $p_2 < p_1 < p_0$. Specifically, define
\begin{align*}
\widehat{\mathcal{S}}_{p_2} := \{ i \in \widehat{\mathcal{S}}_{p_1} : |w_i| \text{ is among the $p_2$ largest of all marginal correlations} \}
\end{align*}
and $\numb_2 := | \{ j \in \widehat{\mathcal{S}}_{p_1} : |w_j| \geq \underset{i \in \Strue} \min | w_i |  \} |$.
We can then derive an upper bound on $\numb_2$, denoted by $\bar{\numb}_2$, and then choose $p_2=\ceil*{\bar{\numb}_2}$; once again, we have $\Strue \subset \widehat{\mathcal{S}}_{p_2}\subset \widehat{\mathcal{S}}_{p_1}\subset \widehat{\mathcal{S}}_{p_0}$. Notice further that we do require $\bar{\numb}_2 < p_1$, which again will impose conditions on $X$.

In similar vein, we can keep on repeating this procedure to obtain a decreasing sequence of numbers $\{\bar{\numb}_j\}_{j=1}^i$ and sets $\widehat{\mathcal{S}}_{p_0} \supset \widehat{\mathcal{S}}_{p_1} \supset \widehat{\mathcal{S}}_{p_2} \supset \ldots \supset \widehat{\mathcal{S}}_{p_i} \supset \Strue$ as long as $\bar{\numb}_i < p_{i-1}$, where $\big\{p_j :=  \ceil*{\bar{\numb}_j}\big\}_{j=1}^i$ and $i \in \mathbb{Z}_{+}$. The complete proof of Theorem \ref{th:main_result_gen} follows from a careful combination of these (analytical) steps. In order for us to be able to do that, however, we need two lemmas. The first lemma provides an upper bound on $\numb_i = | \{ j \in \widehat{\mathcal{S}}_{p_{i-1}}: |w_j| \geq \underset{i \in \Strue} \min | w_i |  \} |$ for $i\in\mathbb{Z}_{+}$, denoted by $\bar{\numb}_i$. The second lemma provides conditions on the design matrix $X$ such that $\bar{\numb}_i < p_{i-1}$. The proof of the theorem follows from repeated application of the two lemmas.

\begin{lemma}
Fix $i\in\mathbb{Z}_{+}$ and suppose $\Strue \subset \widehat{\mathcal{S}}_{p_{i-1}}$, where $|\widehat{\mathcal{S}}_{p_{i-1}}| =: p_{i-1}$ and $p_{i-1} \leq p$. Further, suppose the design matrix $X$ satisfies the $(k, b)\mhyphen$screening condition for the $k\mhyphen$sparse vector $\beta$ and the event $\Gn$ holds true. Finally, define $\numb_i := | \{ j\in \widehat{\mathcal{S}}_{p_{i-1}}: |w_j| \geq \underset{i \in \Strue} \min | w_i |  \} |$. Under these conditions, we have
\begin{align}
\numb_i \leq \frac{ p_{i-1} b(n,p) \|\beta\|_2 + \|\beta\|_1 + 2 p_{i-1} \sqrt{\sigma^2 \log p}}
 {\betamin - b(n,p) \|\beta\|_2  - 2 \sqrt{\sigma^2 \log p} } =: \bar{\numb}_i \text{.}
 \label{eq:frac0_lemma}
\end{align}
\label{lemma:frac0}
\end{lemma}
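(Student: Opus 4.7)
The plan is to bound $\numb_i$ by combining a per-index lower bound on $\min_{i \in \Strue} |w_i|$ with an aggregate upper bound on $\sum_{j \in \widehat{\mathcal{S}}_{p_{i-1}}} |w_j|$, tied together by a Markov-style counting inequality. Throughout, I would write $w = X^\top y = X^\top X \beta + \tilde{\eta}$, so that for each index $j$,
\begin{align*}
w_j \;=\; \sum_{k \in \Strue} X_j^\top X_k \beta_k + \tilde{\eta}_j
\;=\; \mathbf{1}_{\{j \in \Strue\}} \beta_j \;+\; \sum_{k \in \Strue,\, k \neq j} X_j^\top X_k \beta_k \;+\; \tilde{\eta}_j,
\end{align*}
where I have used that $\|X_j\|_2 = 1$ so that $X_j^\top X_j = 1$ when $j \in \Strue$.

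First I would use (\ref{eqn:SC-1}) and the event $\Gn$ (giving $|\tilde{\eta}_j| \leq 2\sqrt{\sigma^2 \log p}$) together with the reverse triangle inequality to obtain, for every $i \in \Strue$, the lower bound $|w_i| \geq |\beta_i| - b(n,p)\|\beta\|_2 - 2\sqrt{\sigma^2 \log p}$, and hence
\begin{align*}
\min_{i \in \Strue} |w_i| \;\geq\; \betamin - b(n,p)\|\beta\|_2 - 2\sqrt{\sigma^2 \log p}.
\end{align*}
This quantity is positive under the theorem's hypothesis $\betamin/\|\beta\|_2 > 2 b(n,p) + 4\sqrt{\sigma^2 \log p}/\|\beta\|_2$, so it may safely serve as a denominator.

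Next I would upper bound each $|w_j|$ for $j \in \widehat{\mathcal{S}}_{p_{i-1}}$ by splitting cases: for $j \in \Strue$ I would apply the triangle inequality together with (\ref{eqn:SC-1}) and $\Gn$ to get $|w_j| \leq |\beta_j| + b(n,p)\|\beta\|_2 + 2\sqrt{\sigma^2 \log p}$, and for $j \in \Sminus$ I would apply (\ref{eqn:SC-2}) and $\Gn$ to get $|w_j| \leq b(n,p)\|\beta\|_2 + 2\sqrt{\sigma^2 \log p}$. Summing over $j \in \widehat{\mathcal{S}}_{p_{i-1}}$ and using $\Strue \subset \widehat{\mathcal{S}}_{p_{i-1}}$ so that $\sum_{j \in \Strue \cap \widehat{\mathcal{S}}_{p_{i-1}}} |\beta_j| = \|\beta\|_1$, together with $|\widehat{\mathcal{S}}_{p_{i-1}}| = p_{i-1}$, yields
\begin{align*}
\sum_{j \in \widehat{\mathcal{S}}_{p_{i-1}}} |w_j| \;\leq\; \|\beta\|_1 + p_{i-1} b(n,p) \|\beta\|_2 + 2 p_{i-1} \sqrt{\sigma^2 \log p}.
\end{align*}

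Finally, I would close the argument by observing that if $\tau := \min_{i \in \Strue} |w_i|$, then every index $j \in \widehat{\mathcal{S}}_{p_{i-1}}$ counted by $\numb_i$ contributes at least $\tau$ to $\sum_{j \in \widehat{\mathcal{S}}_{p_{i-1}}} |w_j|$, hence $\numb_i \cdot \tau \leq \sum_{j \in \widehat{\mathcal{S}}_{p_{i-1}}} |w_j|$, and dividing by $\tau$ (which we already know is positive) produces exactly the claimed expression for $\bar{\numb}_i$. The only step requiring any care is the bookkeeping in the aggregate sum: one must resist the temptation to apply the on-support bound $|w_j| \leq |\beta_j| + b(n,p)\|\beta\|_2 + 2\sqrt{\sigma^2 \log p}$ to all $j \in \widehat{\mathcal{S}}_{p_{i-1}}$, since doing so would replace the tight $\|\beta\|_1$ term with something weaker. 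Otherwise the argument is a straightforward chain of triangle inequalities and the counting bound, so no real obstacle arises.
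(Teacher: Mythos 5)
Your proposal is correct and follows essentially the same route as the paper: a Markov-style counting bound $\numb_i \cdot \min_{i\in\Strue}|w_i| \leq \sum_{j\in\widehat{\mathcal{S}}_{p_{i-1}}}|w_j|$, with the denominator lower-bounded via \eqref{eqn:SC-1} and $\Gn$, and the numerator upper-bounded by splitting indices into $\Strue$ and its complement and applying \eqref{eqn:SC-1}, \eqref{eqn:SC-2}, and $\Gn$. The paper merely organizes the same bookkeeping through the decomposition $w = \xi + \tilde{\eta}$ and separate $\ell_1$-norm bounds on $\xi^{(i)}$ and $\tilde{\eta}^{(i)}$, so the two arguments are interchangeable.
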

The proof of this lemma is provided in Appendix~\ref{app:proof_GeneralConditions.lemma.frac01}. The second lemma, whose proof is given in Appendix~\ref{app:proof_GeneralConditions.one.iter.stoc}, provides conditions on $X$ under which the upper bound derived on $\numb_i$ for $i\in\mathbb{Z}_{+}$, denoted by $\bar{\numb}_i$, is non-trivial.
\begin{lemma}
Fix $i\in\mathbb{Z}_{+}$. Suppose $p_{i-1} > \frac{\sqrt{k}}{\frac{\betamin}{\|\beta\|_2} - 2 b(n,p) - \frac{4 \sqrt{\sigma^2 \log p}}{\|\beta\|_2}}$ and $\frac{\betamin}{\|\beta\|_2} > 2 b(n,p) + \frac{4 \sqrt{\sigma^2 \log p}}{\|\beta\|_2}$. Then, we have $0<\bar{\numb}_i < p_{i-1}$.
\label{lemma:gen_one_iter_stoc}
\end{lemma}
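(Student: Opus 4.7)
The plan is to verify both inequalities $0 < \bar{\numb}_i$ and $\bar{\numb}_i < p_{i-1}$ by direct algebraic manipulation of the closed-form expression from Lemma~\ref{lemma:frac0}, using only the two hypotheses of the lemma together with the elementary bound $\|\beta\|_1 \leq \sqrt{k}\,\|\beta\|_2$ that holds for any $k$-sparse vector (by Cauchy--Schwarz applied on the support of $\beta$).

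First, I would establish positivity. The numerator $p_{i-1} b(n,p)\|\beta\|_2 + \|\beta\|_1 + 2 p_{i-1}\sqrt{\sigma^2 \log p}$ is a sum of non-negative terms and is strictly positive, so it suffices to show that the denominator $\betamin - b(n,p)\|\beta\|_2 - 2\sqrt{\sigma^2 \log p}$ is strictly positive. Dividing by $\|\beta\|_2$, this reduces to $\frac{\betamin}{\|\beta\|_2} > b(n,p) + \frac{2\sqrt{\sigma^2 \log p}}{\|\beta\|_2}$, which is implied (with room to spare) by the second hypothesis $\frac{\betamin}{\|\beta\|_2} > 2 b(n,p) + \frac{4\sqrt{\sigma^2\log p}}{\|\beta\|_2}$.

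Next, I would establish the upper bound $\bar{\numb}_i < p_{i-1}$. Clearing the (now positive) denominator and cancelling the $p_{i-1} b(n,p)\|\beta\|_2$ and $2 p_{i-1}\sqrt{\sigma^2 \log p}$ terms that appear on both sides, this inequality is equivalent to
\begin{align*}
\|\beta\|_1 < p_{i-1}\bigl(\betamin - 2 b(n,p)\|\beta\|_2 - 4\sqrt{\sigma^2 \log p}\bigr).
\end{align*}
Dividing through by $\|\beta\|_2$ (legitimate since the parenthetical factor is positive by the second hypothesis), this becomes
\begin{align*}
\frac{\|\beta\|_1}{\|\beta\|_2} < p_{i-1}\left(\frac{\betamin}{\|\beta\|_2} - 2 b(n,p) - \frac{4\sqrt{\sigma^2 \log p}}{\|\beta\|_2}\right).
\end{align*}
Invoking $\|\beta\|_1 \leq \sqrt{k}\,\|\beta\|_2$ on the left and the first hypothesis $p_{i-1} > \sqrt{k}\big/\bigl(\frac{\betamin}{\|\beta\|_2} - 2 b(n,p) - \frac{4\sqrt{\sigma^2 \log p}}{\|\beta\|_2}\bigr)$ on the right then chains through to yield exactly the required strict inequality.

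There is no real obstacle here; the lemma is essentially a bookkeeping statement certifying that the bound produced by Lemma~\ref{lemma:frac0} is non-trivial, and the two hypotheses have evidently been calibrated to make the arithmetic go through. The only point worth flagging is that the $\sqrt{k}$ that appears in the first hypothesis is precisely the Cauchy--Schwarz bound for $\|\beta\|_1/\|\beta\|_2$ on a $k$-sparse support, which is what couples the sparsity level $k$ to the admissible cardinality $p_{i-1}$ at the $i$th peeling step.
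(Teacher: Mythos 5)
Your proof is correct and follows essentially the same route as the paper's: clear the (positive) denominator in the bound from Lemma~\ref{lemma:frac0}, reduce $\bar{\numb}_i < p_{i-1}$ to $p_{i-1} > \frac{\|\beta\|_1/\|\beta\|_2}{\frac{\betamin}{\|\beta\|_2} - 2 b(n,p) - \frac{4\sqrt{\sigma^2 \log p}}{\|\beta\|_2}}$, and invoke $\|\beta\|_1 \leq \sqrt{k}\,\|\beta\|_2$. You are in fact slightly more careful than the paper, which implicitly uses the positivity of the denominator (needed to preserve the inequality direction and to get $\bar{\numb}_i > 0$) without stating it; your explicit check of that point via the second hypothesis is a welcome addition.
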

\noindent We are now ready to present a complete technical proof of Theorem~\ref{th:main_result_gen}.
\begin{proof}
The idea is to use Lemma~\ref{lemma:frac0} and Lemma~\ref{lemma:gen_one_iter_stoc} \emph{repeatedly} to screen columns of $X$. Note, however, that this is simply an analytical technique and we do not \emph{actually} need to perform such an iterative procedure to specify $d$ in Algorithm~\ref{algo:correlation_screening}. To begin, recall that we have $p_0 := p$, $\widehat{\mathcal{S}}_{p_0} := [[p]]$, $$\widehat{\mathcal{S}}_{p_1} := \{ i \in \widehat{\mathcal{S}}_{p_0} : |w_i| \text{ is among the $p_1$ largest of all marginal correlations}\},$$ and $p_1 = \ceil*{\bar{\numb}_1}$, where $\bar{\numb}_1$ is defined in \eqref{eq:frac0_lemma}. By Lemma~\ref{lemma:frac0} and Lemma~\ref{lemma:gen_one_iter_stoc}, we have $\Strue \subset \widehat{\mathcal{S}}_{p_1}$ and $p_1<p_0$, respectively. Next, given $p_1 > \ceil*{\frac{\sqrt{k}}{\frac{\betamin}{\|\beta\|_2} - 2 b(n,p) - \frac{4 \sqrt{\sigma^2 \log p}}{\|\beta\|_2}}}$, we can use Lemma \ref{lemma:frac0} and Lemma \ref{lemma:gen_one_iter_stoc} to obtain $\widehat{\mathcal{S}}_{p_2}$ from $\widehat{\mathcal{S}}_{p_1}$ in a similar fashion. Specifically, let
$$\widehat{\mathcal{S}}_{p_2} := \{ i \in \widehat{\mathcal{S}}_{p_1} : |w_i| \text{ is among the $p_2$ largest of all marginal correlations}\}$$ and $p_2 = \ceil*{\bar{\numb}_2}$, where $\bar{\numb}_2$ is defined in \eqref{eq:frac0_lemma}. Then, by Lemma \ref{lemma:frac0} and Lemma \ref{lemma:gen_one_iter_stoc}, we have $\Strue \subset \widehat{\mathcal{S}}_{p_2}$ and $p_2<p_1$, respectively.

Notice that we can keep on repeating this procedure to obtain sub-models $\widehat{\mathcal{S}}_{p_1}, \widehat{\mathcal{S}}_{p_2}, \ldots, \widehat{\mathcal{S}}_{p_\iter}$ such that $p_{\iter} \leq \frac{\sqrt{k}}{\frac{\betamin}{\|\beta\|_2} - 2 b(n,p) - \frac{4 \sqrt{\sigma^2 \log p}}{\|\beta\|_2}}$ and $p_{\iter - 1} > \frac{\sqrt{k}}{\frac{\betamin}{\|\beta\|_2} - 2 b(n,p) - \frac{4 \sqrt{\sigma^2 \log p}}{\|\beta\|_2}}$. By repeated applications of Lemma~\ref{lemma:frac0} and Lemma~\ref{lemma:gen_one_iter_stoc}, we have $\Strue \subset \widehat{\mathcal{S}}_{p_\iter}$. Further, we are also guaranteed that $p_{\iter} \leq \frac{\sqrt{k}}{\frac{\betamin}{\|\beta\|_2} - 2 b(n,p) - \frac{4 \sqrt{\sigma^2 \log p}}{\|\beta\|_2}}$. Thus, we can choose $d \geq \ceil*{\frac{\sqrt{k}}{\frac{\betamin}{\|\beta\|_2} - 2 b(n,p) - \frac{4 \sqrt{\sigma^2 \log p}}{\|\beta\|_2}}}$ in Algorithm~\ref{algo:correlation_screening} in one shot and have $\Strue \subset \Shat$.
\end{proof}

\section{Screening of Arbitrary Design Matrices}
\label{sec:ArbitraryMatrices}

\todel{The ExSIS analysis in Sec.~\ref{sec:RandomMatrices} specializes Theorem~\ref{th:main_result_gen} for sub-Gaussian design matrices. But what about the design matrices in which either the entries do not follow sub-Gaussian distributions or the statistical distributions of entries are unknown? We address this particular question in this section by deriving verifiable sufficient conditions that guarantee the screening condition for any arbitrary (random or deterministic) design matrix.}

\toedit{In this section, we specialize Theorem~\ref{th:main_result_gen} for a family of arbitrary linear models in which the design matrices satisfy certain polynomial-time verifiable conditions. We achieve this by deriving verifiable sufficient conditions that guarantee the screening condition for any arbitrary (random or deterministic) design matrix.} These sufficient conditions are presented in terms of two measures of similarity among the columns of a design matrix. These measures, termed \emph{worst-case coherence} and \emph{average coherence}, are defined as follows.
\begin{definition}[Worst-case and Average Coherences]
Let $X$ be an $n \times p$ matrix with unit $\ell_2$-norm columns. The worst-case coherence of $X$ is denoted by $\mu$ and is defined as~\cite{davis1997adaptive}: $$\mu := \underset{i,j:i\neq j} \max \big|X_i^\top X_j\big|.$$ On the other hand, the average coherence of $X$ is denoted by $\nu$ and is defined as~\cite{bajwa2010gabor}: $$\nu := \frac{1}{p-1} \underset{i} \max \bigg| \underset{j:j\neq i} \sum X_i^\top X_j \bigg|.$$
\end{definition}
\noindent Notice that both the worst-case and the average coherences are readily computable in polynomial time. Heuristically, the worst-case coherence is an indirect measure of pairwise similarity among the columns of $X$: $\mu \in [0,1]$ with $\mu \searrow 0$ as the columns of $X$ become less similar and $\mu \nearrow 1$ as at least two columns of $X$ become more similar. The average coherence, on the other hand, is an indirect measure of both the collective similarity among the columns of $X$ and the spread of the columns of $X$ within the unit sphere: $\nu \in [0,\mu]$ with $\nu \searrow 0$ as the columns of $X$ become more spread out in $\mathbb{R}^n$ and $\nu \nearrow \mu$ as the columns of $X$ become less spread out. We refer the reader to \cite{bajwa2012two} for further discussion of these two measures as well as their values for commonly encountered matrices.

We are now ready to describe the main results of this section. The first result connects the screening condition to the worst-case coherence. We will see, however, that this result suffers from the so-called square-root bottleneck: ExSIS analysis based solely on the worst-case coherence can, at best, handle $k = \mathcal{O}(\sqrt{n})$ scaling of the sparsity parameter. The second result overcomes this bottleneck by connecting the screening condition to both worst-case and average coherences. The caveat here is that this result imposes a mild statistical prior on the set $\Strue$.

\subsection{ExSIS and the Worst-case Coherence}
We begin by relating the worst-case coherence of an arbitrary design matrix $X$ with unit-norm columns to the screening condition.
\begin{lemma}[Worst-case Coherence and the Screening Condition]
Let $\Xm$ be an $n \times p$ design matrix with unit-norm columns. Then, we have
\begin{align*}
\max \limits_{i\in \Strue} | \sum \limits_{\mathclap{\substack{j\in \Strue \\ j \neq i}}} X_i^{\top} \Xm_{j} \beta_j |
&\leq \mu \sqrt{k} \|\beta\|_2 \textnormal{, and}\\
\max \limits_{i\in \Struec} |\sum \limits_{j\in \Strue} X_i^{\top} \Xm_{j} \beta_j |
&\leq \mu \sqrt{k} \|\beta\|_2 \text{.}
\end{align*}
\label{lemma:Det_mu}
\end{lemma}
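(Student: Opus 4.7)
The plan is to prove both bounds by the same two-step recipe: apply the triangle inequality combined with the definition of worst-case coherence, and then use the standard $\ell_1$-to-$\ell_2$ inequality for $k$-sparse vectors.

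For the first inequality, I would fix an arbitrary $i \in \Strue$ and write
\begin{align*}
\Big| \sum_{\substack{j \in \Strue \\ j \neq i}} X_i^{\top} X_j \beta_j \Big|
\leq \sum_{\substack{j \in \Strue \\ j \neq i}} \bigl| X_i^{\top} X_j \bigr|\, |\beta_j|
\leq \mu \sum_{\substack{j \in \Strue \\ j \neq i}} |\beta_j|,
\end{align*}
where the second step uses $|X_i^\top X_j| \leq \mu$ for $i \neq j$. Since the sum contains at most $k-1$ terms indexed by a subset of $\Strue$, Cauchy--Schwarz gives
\begin{align*}
\sum_{\substack{j \in \Strue \\ j \neq i}} |\beta_j|
\leq \sqrt{k-1}\, \Big(\sum_{\substack{j \in \Strue \\ j \neq i}} \beta_j^2\Big)^{1/2}
\leq \sqrt{k}\, \|\beta\|_2,
\end{align*}
which yields the first claim after taking the maximum over $i \in \Strue$.

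For the second inequality, I would fix $i \in \Struec$ (so $i \neq j$ for every $j \in \Strue$), and apply the same two steps:
\begin{align*}
\Big| \sum_{j \in \Strue} X_i^{\top} X_j \beta_j \Big|
\leq \mu \sum_{j \in \Strue} |\beta_j|
\leq \mu \sqrt{k}\, \|\beta\|_2,
\end{align*}
where the final inequality again follows from Cauchy--Schwarz applied to the $k$-sparse vector $\beta$. Taking the maximum over $i \in \Struec$ completes the proof.

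There is no real obstacle here: the argument is a textbook combination of the triangle inequality, the coherence bound $|X_i^\top X_j| \leq \mu$ for $i \neq j$, and the inequality $\|v\|_1 \leq \sqrt{|\mathrm{supp}(v)|}\,\|v\|_2$. The only point requiring minor care is the separation of the two cases: in case one, the exclusion $j \neq i$ is needed because $X_i^\top X_i = 1$ (which is why the diagonal term is removed from \eqref{eqn:SC-1}), whereas in case two it is automatic because $\Strue \cap \Struec = \emptyset$.
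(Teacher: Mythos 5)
Your proposal is correct and follows essentially the same route as the paper's proof: triangle inequality, the coherence bound $|X_i^\top X_j|\leq\mu$ for $i\neq j$, and then $\|\beta\|_1\leq\sqrt{k}\,\|\beta\|_2$ for the $k$-sparse $\beta$ (the paper passes through $\mu\|\beta\|_1$ rather than the slightly sharper $\sqrt{k-1}$ intermediate bound, but this is immaterial). Your closing remark about why the diagonal term must be excluded in the first case but not the second matches the structure of the paper's argument as well.
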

\noindent The proof of this lemma is provided in Appendix~\ref{app:proof_ArbitraryMatrices.mu}. It follows from Lemma~\ref{lemma:Det_mu} that a design matrix satisfies the screening condition with parameter $b(n,p) = \mu \sqrt{k}$ as long as $\mu < k^{-1} \Leftrightarrow k < \mu^{-1}$. We now combine this implication of Lemma~\ref{lemma:Det_mu} with Theorem~\ref{th:main_result_gen} to provide a result for ExSIS of arbitrary linear models.
\begin{theorem}
Let $y=X\beta+\eta$ with $\beta$ a $k\mhyphen$sparse vector and the entries of $\eta$ independently distributed as $\mathcal{N}(0,\sigma^2)$. Suppose $k < \mu^{-1}$ and $\frac{\betamin}{\|\beta\|_2} > 2 \mu \sqrt{k} + 4 \frac{\sqrt{\sigma^2 \log p}}{\|\beta\|_2}$. Then, Algorithm~\ref{algo:correlation_screening} satisfies $\Strue \subset \Shat$ with probability exceeding $1 - 2 (p \sqrt{2 \pi \log p})^{-1}$ as long as $d \geq \ceil*{\frac{\sqrt{k}}{\frac{\betamin}{\|\beta\|_2} - 2 \mu \sqrt{k} - \frac{4 \sqrt{\sigma^2 \log p}}{\|\beta\|_2}}}$.
\label{th:mu_conditions}
\end{theorem}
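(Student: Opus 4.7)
The plan is to reduce Theorem~\ref{th:mu_conditions} to Theorem~\ref{th:main_result_gen} by supplying the screening condition through Lemma~\ref{lemma:Det_mu}. Since both the hypothesis and the conclusion are stated purely in terms of $\mu$, $k$, $\betamin/\|\beta\|_2$, $\sigma$, and $p$, the natural choice for the screening-condition parameter is $b(n,p) := \mu \sqrt{k}$.

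First, I would verify that the assumption $k < \mu^{-1}$ is exactly what is needed for Lemma~\ref{lemma:Det_mu} to deliver a valid screening condition in the sense of Definition~\ref{def:sc1}: the inequality $k < \mu^{-1}$ rearranges to $\mu \sqrt{k} < 1/\sqrt{k}$, so $b(n,p) = \mu \sqrt{k} < 1/\sqrt{k}$ as required. The two bounds \eqref{eqn:SC-1} and \eqref{eqn:SC-2} then follow immediately from Lemma~\ref{lemma:Det_mu}. This step is purely deterministic, so no failure event on $X$ needs to be accounted for in the probability bookkeeping.

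Second, the only probabilistic ingredient is the noise event $\Gn = \{\|X^\top \eta\|_\infty \leq 2 \sqrt{\sigma^2 \log p}\}$. Applying the Gaussian tail bound \eqref{eq:eta} with $\epsilon = 2 \sqrt{\log p}$ yields $\Pr(\Gn) \geq 1 - 2 (p \sqrt{2 \pi \log p})^{-1}$, as recorded in \eqref{eq:eta2.final}, matching exactly the probability advertised in the theorem statement.

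Third, I would invoke Theorem~\ref{th:main_result_gen} conditioned on $\Gn$, substituting $b(n,p) = \mu \sqrt{k}$ throughout. The hypothesis $\frac{\betamin}{\|\beta\|_2} > 2 \mu \sqrt{k} + 4 \sqrt{\sigma^2 \log p}/\|\beta\|_2$ is then precisely $\frac{\betamin}{\|\beta\|_2} > 2 b(n,p) + 4 \sqrt{\sigma^2 \log p}/\|\beta\|_2$, and the stated lower bound on $d$ coincides with the one in Theorem~\ref{th:main_result_gen} under the same substitution; Theorem~\ref{th:main_result_gen} then delivers $\Strue \subset \Shat$. No genuine analytical obstacle arises—the substantive work was absorbed in Theorem~\ref{th:main_result_gen} and Lemma~\ref{lemma:Det_mu}—and the only point deserving a sentence of care is that, because $X$ here is arbitrary (and possibly deterministic), no extra failure event on $X$ needs to be absorbed into the final probability, which is why the bound is noticeably cleaner than the sub-Gaussian counterpart in Theorem~\ref{th:main_result_sub_Gaussian}.
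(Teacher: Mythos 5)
Your proposal is correct and follows exactly the route the paper intends: the paper's own proof of Theorem~\ref{th:mu_conditions} is stated as following directly from Lemma~\ref{lemma:Det_mu} (which deterministically yields the screening condition with $b(n,p)=\mu\sqrt{k}$, valid precisely when $k<\mu^{-1}$) and Theorem~\ref{th:main_result_gen}, with the probability coming solely from the noise event $\Gn$ via \eqref{eq:eta2.final}. Your additional remark that no failure event on $X$ enters the probability bookkeeping is exactly the right observation and consistent with the paper.
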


The proof of this theorem follows directly from Lemma~\ref{lemma:Det_mu} and Theorem~\ref{th:main_result_gen}. Next, a straightforward corollary of Theorem~\ref{th:mu_conditions} shows that ExSIS of arbitrary linear models can in fact be carried out without explicit knowledge of the sparsity parameter $k$.
\begin{corollary}
Let $y=X\beta+\eta$ with $\beta$ a $k\mhyphen$sparse vector and the entries of $\eta$ independently distributed as $\mathcal{N}(0,\sigma^2)$. Suppose $p\geq2n$, $k < \mu^{-1}$, and $\frac{\betamin}{\|\beta\|_2} > 2 c_1 \mu \sqrt{k} + 4 c_2 \frac{\sqrt{\sigma^2 \log p}}{\|\beta\|_2}$ for some $c_1, c_2>2$. Then, Algorithm~\ref{algo:correlation_screening} satisfies $\Strue \subset \Shat$ with probability exceeding $1 - 2 (p \sqrt{2 \pi \log p})^{-1}$ as long as $d\geq \ceil*{\sqrt{n} \, }$.
\label{corr:mu_conditions}
\end{corollary}
\begin{proof}
Under the assumption of $\frac{\betamin}{\|\beta\|_2} > 2 c_1 \mu \sqrt{k} + \frac{4 c_2 \sqrt{\sigma^2 \log p}}{\|\beta\|_2}$, notice that
\begin{align}
d \geq \ceil*{\frac{\sqrt{k}}{ 2(c_1 -1) \mu \sqrt{k} + \frac{4 (c_2 -1) \sqrt{\sigma^2 \log p}}{\|\beta\|_2}}}
\label{eq:Det_d1}
\end{align}
is a sufficient condition for $d \geq \ceil*{\frac{\sqrt{k}}{\frac{\betamin}{\|\beta\|_2} - 2 \mu \sqrt{k} - \frac{4 \sqrt{\sigma^2 \log p}}{\|\beta\|_2}}}$. Further, note that $d \geq \ceil*{ (2 \mu)^{-1} }$ is a sufficient condition for \eqref{eq:Det_d1}. Next, since $p\geq 2n$, we also have $\mu^{-1} \leq \sqrt{2n}$ from the Welch bound on the worst-case coherence of design matrices~\cite{welch1974lower}. Thus, $d\geq \ceil*{\sqrt{n}\,}$ is a sufficient condition for $d \geq \ceil*{\frac{\sqrt{k}}{\frac{\betamin}{\|\beta\|_2} - 2 \mu \sqrt{k} - \frac{4 \sqrt{\sigma^2 \log p}}{\|\beta\|_2}}}$.
\end{proof}

\toedit{It is worth reflecting here on the fact that Corollary~\ref{corr:mu_conditions} allows the number of remaining variables in the screened model to scale as $d\geq \ceil*{\sqrt{n}\,}$. While this means that the screened model can have as few as $\ceil*{\sqrt{n}\,}$ variables, the result suffers from the so-called square-root bottleneck as it constraints sparsity to scale only as $\mathcal{O}(\sqrt{n}\,)$: $k = \mathcal{O}(\mu^{-1}) \Rightarrow k = \mathcal{O}(\sqrt{n}\,)$ because of the Welch bound. We overcome this sparsity limitation by adding the average coherence into the mix and imposing a uniform statistical prior on the true model $\Strue$ in the next section.}

\todel{It is interesting to compare this result for arbitrary linear models with Corollary~\ref{corr:main_result_sub_Gaussian} for sub-Gaussian linear models. Corollary~\ref{corr:main_result_sub_Gaussian} requires the size of the screened model to scale as $\mathcal{O}(n/\log{p})$, whereas this result requires $d$ to scale only as $\mathcal{O}(\sqrt{n})$. While this may seem to suggest that Corollary~\ref{corr:mu_conditions} is better than Corollary~\ref{corr:main_result_sub_Gaussian}, such an observation ignores the respective constraints on the sparsity parameter $k$ in the two results. Specifically, Corollary~\ref{corr:main_result_sub_Gaussian} allows for almost linear scaling of the sparsity parameter, $k = \mathcal{O}(n/\log{p})$, whereas Corollary~\ref{corr:mu_conditions} suffers from the so-called square-root bottleneck: $k = \mathcal{O}(\mu^{-1}) \Rightarrow k = \mathcal{O}(\sqrt{n})$ because of the Welch bound. Stated differently, Corollary~\ref{corr:mu_conditions} fails to specialize to Corollary~\ref{corr:main_result_sub_Gaussian} for the case of $X$ being a sub-Gaussian design matrix. We overcome this limitation of the results of this section by adding the average coherence into the mix and imposing a statistical prior on the true model $\Strue$ in the next section.}

\subsection{ExSIS and the Coherence Property}\label{ssec:exsis_cohProperty}
In order to break the square-root bottleneck for ExSIS of arbitrary linear models, we first define the notion of the coherence property.
\begin{definition}[The Coherence Property]
An $n \times p$ design matrix $\Xm$ with unit-norm columns is said to obey the coherence property if there exists a constant $c_{\mu}>0$ such that $\mu < \frac{1}{c_{\mu }\sqrt{\log p}}$ and $\nu < \frac{\mu}{\sqrt{n}}$.
\label{def:coherence}
\end{definition}
\noindent Heuristically, the coherence property, which was first introduced in~\cite{bajwa2010gabor}, requires the independent variables to be sufficiently (marginally and jointly) uncorrelated. Notice that, unlike many conditions in high-dimensional statistics (see, e.g., \cite{wainwright2009sharp,zhao2006model,candes2008restricted,raskutti2010restricted}), the coherence property is explicitly certifiable in polynomial time for any given design matrix. We now establish that the coherence property implies the design matrix satisfies the screening condition with high probability, where the probability is with respect to uniform prior on the true model $\Strue$.
\begin{lemma}[Coherence Property and the Screening Condition]
Let $\Xm$ be an $n \times p$ design matrix that satisfies the coherence property with $c_{\mu}>10\sqrt{2}$, and suppose $p\geq \max \{2n, \exp(5)\}$ and $k\leq \frac{n}{\log p}$. Further, assume $\Strue$ is drawn uniformly at random from $k$-subsets of $[[p]]$. Then, with probability exceeding $1 - 4 p^{-1}$, we have
\begin{align*}
 &\max \limits_{i\in \Strue} | \sum \limits_{\mathclap{\substack{j\in \Strue \\ j \neq i}}} X_i^{\top} \Xm_{j} \beta_j | \leq c_{\mu} \mu \sqrt{\log p} \|\beta\|_2, \quad \textnormal{and}
 \\
 &\max \limits_{i\in \Struec} | \sum \limits_{j\in \Strue} X_i^{\top} \Xm_{j} \beta_j | \leq c_{\mu} \mu \sqrt{\log p} \|\beta\|_2.
\end{align*}
\label{lemma:StOC_SC}
\end{lemma}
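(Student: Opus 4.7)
The plan is to fix an arbitrary $i\in[[p]]$, bound the random quantity
$T_i := \sum_{j\in\Strue,\,j\neq i} X_i^\top X_j \beta_j$ (on-support case; the off-support case is analogous) with high probability, and then take a union bound over $i$. Because only the \emph{positions} of the nonzero entries of $\beta$ are randomized (uniformly over $k$-subsets), I would re-parameterize via a uniform random permutation: fix $\beta^{(0)}\in\mathbb{R}^p$ supported on $\{1,\dots,k\}$ with $\|\beta^{(0)}\|_2=\|\beta\|_2$, draw $\pi$ uniform on permutations of $[[p]]$, and set $\beta_j:=\beta^{(0)}_{\pi^{-1}(j)}$. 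Then $\Strue=\pi([[k]])$ is uniform on $k$-subsets and
\[
T_i \;=\; \sum_{\ell=1}^{k} \beta^{(0)}_\ell\, X_i^\top X_{\pi(\ell)}\,\mathbb{1}\{\pi(\ell)\neq i\},
\]
which cleanly separates the deterministic coefficients $\beta^{(0)}_\ell$ from the random indices $\pi(\ell)$.

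Next, conditioning on whether $i\in\Strue$ or $i\notin\Strue$, I would split $T_i$ into its conditional mean plus a fluctuation and control each piece via a distinct feature of the coherence property. For the mean, the indices $\pi(\ell)$ restricted to $\ell$ with $\pi(\ell)\neq i$ are exchangeable on $[[p]]\setminus\{i\}$, so $\mathbb{E}[X_i^\top X_{\pi(\ell)}\mid\cdot]=\tfrac{1}{p-1}\sum_{j\neq i}X_i^\top X_j$, whose modulus is at most $\nu$. Pulling this common factor out and using $|\sum_\ell \beta^{(0)}_\ell| \leq \sqrt{k}\,\|\beta\|_2$ via Cauchy--Schwarz, together with the coherence-property conditions $\nu<\mu/\sqrt{n}$ and $k\leq n/\log p$, gives
\[
|\mathbb{E}[T_i\mid\cdot]| \;\leq\; \sqrt{k}\,\nu\,\|\beta\|_2 \;\leq\; \frac{\mu\,\|\beta\|_2}{\sqrt{\log p}},
\]
which is negligible compared to the target bound $c_\mu\mu\sqrt{\log p}\,\|\beta\|_2$.

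For the fluctuation $T_i-\mathbb{E}[T_i\mid\cdot]$, each summand $\beta^{(0)}_\ell X_i^\top X_{\pi(\ell)}$ lies in the interval $[-\mu|\beta^{(0)}_\ell|,\,\mu|\beta^{(0)}_\ell|]$ by the worst-case coherence bound, and the indicators $\{\mathbb{1}\{\pi(\ell)=j\}\}$ that arise from sampling without replacement are negatively associated (Joag-Dev--Proschan), so Hoeffding's inequality carries over verbatim to yield
\[
\Pr\!\bigl(|T_i-\mathbb{E}[T_i\mid\cdot]|>t\bigr) \;\leq\; 2\exp\!\Bigl(-\frac{t^2}{2\mu^2\|\beta\|_2^2}\Bigr).
\]
Choosing $t$ equal to (essentially) $c_\mu\mu\sqrt{\log p}\,\|\beta\|_2$, with the slack absorbed by the small mean bound (permissible because $p\geq\exp(5)$ keeps $1/\sqrt{\log p}$ under control), gives a per-$i$ failure probability of $\leq 2p^{-c_\mu^2/2}$. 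A union bound over $i\in[[p]]$ and the two cases (on- and off-support) produces overall failure probability $\leq 4p\cdot p^{-c_\mu^2/2}\leq 4p^{-1}$, since $c_\mu>10\sqrt{2}$ makes $c_\mu^2/2$ vastly exceed $2$.

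I expect the main technical subtlety to lie in the concentration step---specifically, invoking a Hoeffding-type inequality for the \emph{weighted} sum under sampling without replacement that delivers the sub-Gaussian constant $2\mu^2\|\beta\|_2^2$ rather than the looser $\mathcal{O}(k\mu^2\|\beta\|_\infty^2)$. The negative-association route via Joag-Dev--Proschan (or, equivalently, a Serfling-style finite-population Hoeffding bound) resolves this but must be invoked with care. Everything else---the mean computation through $\nu$, Cauchy--Schwarz, and the final union bound---is routine.
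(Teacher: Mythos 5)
Your overall plan --- control the conditional mean through the average coherence $\nu$, control the fluctuation through a concentration inequality for sampling without replacement, and finish with a union bound over $i$ and the two cases --- is exactly the internal structure of the result the paper relies on. The paper itself does none of this work directly: its proof of Lemma~\ref{lemma:StOC_SC} simply invokes \cite[Lemmas~3 and~4]{bajwa2010gabor} (restated as Lemma~\ref{lemma:StOCa} and Lemma~\ref{lemma:StOCb}), sets $\epsilon = c_\mu\mu\sqrt{\log p}$ and $a=9$, checks the three hypotheses $\epsilon<1$, $k\leq (1+a)^{-1}p$, and $k\leq \epsilon^2\nu^{-2}$ using the coherence property, $p \geq \max\{2n,\exp(5)\}$, and $k \leq n/\log p$, and then verifies that the resulting failure probability is at most $4p^{-1}$. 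So you are re-proving the cited lemmas rather than applying them, which is legitimate in principle.

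The genuine gap is in your concentration step. Negative association of the indicators $\mathbb{1}\{\pi(\ell)=j\}$ (or of the sampled values $X_i^\top X_{\pi(\ell)}$, which are a permutation distribution and hence NA) gives the moment-generating-function factorization $\mathbb{E}\bigl[\prod_\ell f_\ell\bigr] \leq \prod_\ell \mathbb{E}[f_\ell]$ only when the functions $f_\ell$ are nonnegative and \emph{simultaneously} monotone in the same direction. Your summands are $\beta^{(0)}_\ell\, X_i^\top X_{\pi(\ell)}$ with weights $\beta^{(0)}_\ell$ of arbitrary sign, so the maps $y \mapsto \exp(\lambda \beta^{(0)}_\ell y)$ are increasing for some $\ell$ and decreasing for others, and the NA route does not "carry over verbatim"; this is precisely the subtlety you flag but do not resolve. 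Moreover, the constant you claim, a sub-Gaussian parameter of $2\mu^2\|\beta\|_2^2$, is stronger than what the standard repair delivers: the fix is an Azuma/Doob-martingale argument on the sequential revelation of $\pi(1),\dots,\pi(k)$ (this is what \cite{bajwa2010gabor} actually does), and the cross-terms from re-conditioning inflate the denominator of the exponent to $16(2+a^{-1})^2\mu^2$ rather than $2\mu^2$ --- which is exactly why the cited lemmas carry that factor and why the paper requires $c_\mu > 10\sqrt{2}$ instead of merely $c_\mu>2$. Your conclusion survives because the large value of $c_\mu$ absorbs the weaker constant (with $a=9$ one gets an exponent of roughly $\tfrac{64}{81}\cdot\tfrac{c_\mu^2}{71}\log p > 2\log p$), but as written the step either needs the martingale argument spelled out with the correct constants or a citation to \cite[Lemmas~3--4]{bajwa2010gabor} in place of the Hoeffding/NA claim. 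The mean bound $\sqrt{k}\,\nu\,\|\beta\|_2 \leq \mu\|\beta\|_2/\sqrt{\log p}$ and the final union bound are correct.
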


The proof of this lemma is provided in Appendix~\ref{app:proof_ArbitraryMatrices.mu.nu}. Lemma~\ref{lemma:StOC_SC} implies that a design matrix that obeys the coherence property also satisfies the screening condition for \emph{most} models with $b(n,p) = c_{\mu} \mu \sqrt{\log p}$ as long as $\mu < c_\mu^{-1} (k \log{p})^{-1/2} \Leftrightarrow k < \frac{\mu^{-2}}{c_\mu^2 \log{p}}$. Comparing this with Lemma~\ref{lemma:Det_mu} and the resulting constraint $k < \mu^{-1}$ for the screening condition to hold in the case of arbitrary design matrices, we see that---at the expense of uniform prior on the true model---the coherence property results in a better bound on the screening parameter as long as $\log{p} = \mathcal{O}(\mu^{-1})$. We can now utilize Lemma~\ref{lemma:StOC_SC} along with Theorem~\ref{th:main_result_gen} to provide an improved result for ExSIS of arbitrary linear models.
\begin{theorem}
Let $y=X\beta+\eta$ with $\beta$ a $k\mhyphen$sparse vector and the entries of $\eta$ independently distributed as $\mathcal{N}(0,\sigma^2)$. Further, assume $\Xm$ satisfies the coherence property with $c_{\mu}>10\sqrt{2}$ and $\Strue$ is drawn uniformly at random from $k$-subsets of $[[p]]$. Finally, suppose $p\geq \max \{2n, \exp(5)\}$, $k < \frac{\mu^{-2}}{c_\mu^2 \log{p}}$, and $\frac{\betamin}{\|\beta\|_2} > 2 c_{\mu} \mu \sqrt{\log p} + \frac{4 \sqrt{\sigma^2 \log p}}{\|\beta\|_2}$. Then, Algorithm~\ref{algo:correlation_screening} satisfies $\Strue \subset \Shat$ with probability exceeding $1 - 6p^{-1}$ as long as $d \geq \ceil*{\frac{\sqrt{k}}{\frac{\betamin}{\|\beta\|_2} - 2 c_{\mu} \mu \sqrt{\log p} - \frac{4 \sqrt{\sigma^2 \log p}}{\|\beta\|_2}}}$.
\label{th:mu_nu_conditions}
\end{theorem}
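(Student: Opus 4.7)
The plan is to mimic the proof of Theorem~\ref{th:main_result_sub_Gaussian}, replacing the appeal to Lemmas~\ref{lemma:subGa}--\ref{lemma:subGb} with an appeal to Lemma~\ref{lemma:StOC_SC}, and then simply invoke Theorem~\ref{th:main_result_gen}. Concretely, let $\mathcal{G}_p$ denote the event that $X$ satisfies the $(k,b)$-screening condition with parameter $b(n,p) = c_{\mu} \mu \sqrt{\log p}$, and let $\Gn = \{\|\tilde{\eta}\|_\infty \leq 2\sqrt{\sigma^2 \log p}\}$ as in Theorem~\ref{th:main_result_gen}. The first step is to check that the hypotheses of Lemma~\ref{lemma:StOC_SC} are in force: the coherence property with $c_{\mu}>10\sqrt{2}$, $p \geq \max\{2n, \exp(5)\}$, the uniform prior on $\Strue$, and $k \leq n/\log p$. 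The last of these is not stated verbatim in the theorem, but the hypothesis $k < \mu^{-2}/(c_\mu^2 \log p)$ together with the Welch bound $\mu^{-2} \leq 2n$ (valid since $p \geq 2n$) and $c_\mu^2 > 200$ gives $k < n/(100 \log p) \leq n/\log p$, so Lemma~\ref{lemma:StOC_SC} applies and yields $\Pr(\mathcal{G}_p) \geq 1 - 4 p^{-1}$.

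The second step is to confirm that $b(n,p) = c_{\mu}\mu\sqrt{\log p}$ is admissible in Definition~\ref{def:sc1}, i.e., $k < (b(n,p))^{-2}$; this is exactly the hypothesis $k < \mu^{-2}/(c_\mu^2 \log p)$. The third step handles the noise event: \eqref{eq:eta2.final} already gives $\Pr(\Gn) \geq 1 - 2(p\sqrt{2\pi \log p})^{-1}$, and since $p \geq \exp(5)$ we have $\sqrt{2\pi \log p} \geq \sqrt{10\pi} > 1$, hence $\Pr(\Gn) \geq 1 - 2 p^{-1}$.

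The fourth step is a union bound: $\Pr(\mathcal{G}_p \cap \Gn) \geq 1 - 4p^{-1} - 2 p^{-1} = 1 - 6 p^{-1}$. On this joint event, the hypotheses of Theorem~\ref{th:main_result_gen} are satisfied with the parameter $b(n,p) = c_{\mu}\mu\sqrt{\log p}$, because the theorem's standing assumption $\betamin/\|\beta\|_2 > 2 b(n,p) + 4\sqrt{\sigma^2 \log p}/\|\beta\|_2$ becomes exactly the assumption stated here. Substituting $b(n,p) = c_{\mu}\mu\sqrt{\log p}$ into the lower bound on $d$ in Theorem~\ref{th:main_result_gen} yields the advertised threshold $d \geq \lceil \sqrt{k} / (\betamin/\|\beta\|_2 - 2 c_{\mu} \mu \sqrt{\log p} - 4\sqrt{\sigma^2 \log p}/\|\beta\|_2) \rceil$, which completes the argument.

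There is no genuine technical obstacle here; all of the real work has already been done in Lemma~\ref{lemma:StOC_SC} (where the average-coherence-based concentration that breaks the square-root bottleneck lives) and in Theorem~\ref{th:main_result_gen} (the iterative pruning argument). The only mild care required is in the bookkeeping, specifically in (a)~deriving the auxiliary bound $k \leq n/\log p$ from the stated hypothesis so as to legitimately cite Lemma~\ref{lemma:StOC_SC}, and (b)~simplifying the $2(p\sqrt{2\pi \log p})^{-1}$ tail to the clean $2p^{-1}$ using $p \geq \exp(5)$, so that the three failure probabilities combine to the claimed $6p^{-1}$.
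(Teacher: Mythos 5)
Your proposal is correct and follows exactly the route the paper intends: the paper omits this proof, stating only that it follows from Lemma~\ref{lemma:StOC_SC}, Theorem~\ref{th:main_result_gen}, and a union bound, and explicitly notes (as you do) that the hypothesis $k < \mu^{-2}/(c_\mu^2 \log p)$ subsumes the $k \leq n/\log p$ requirement of Lemma~\ref{lemma:StOC_SC} via the Welch bound. Your bookkeeping of the failure probabilities ($4p^{-1} + 2p^{-1} = 6p^{-1}$, using $p \geq \exp(5)$ to simplify the noise tail) is the right way to fill in the one detail the paper leaves implicit.
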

The proof of this theorem is omitted here since it follows in a straightforward manner from Lemma~\ref{lemma:StOC_SC}, Theorem~\ref{th:main_result_gen}, and a union bound argument. Nonetheless, it is worth mentioning here that the $k\leq \frac{n}{\log p}$ bound in Lemma~\ref{lemma:StOC_SC} is omitted in Theorem~\ref{th:mu_nu_conditions} since $k < \frac{\mu^{-2}}{c_\mu^2 \log{p}} \Rightarrow k\leq \frac{n}{\log p}$ because of the Welch bound. The final result of this section is a corollary of Theorem~\ref{th:mu_nu_conditions} that removes the dependence of $d$ on knowledge of the problem parameters.
\begin{corollary}
Let $y=X\beta+\eta$ with $\beta$ a $k\mhyphen$sparse vector and the entries of $\eta$ independently distributed as $\mathcal{N}(0,\sigma^2)$. Further, assume $\Xm$ satisfies the coherence property with $c_{\mu}>10\sqrt{2}$ and $\Strue$ is drawn uniformly at random from $k$-subsets of $[[p]]$. Finally, suppose $p\geq \max \{2n, \exp(5)\}$, $k < \frac{\mu^{-2}}{c_\mu^2 \log{p}}$, and $\frac{\betamin}{\|\beta\|_2} > 2 c_{\mu} c_1 \mu \sqrt{ \log p} + 4 c_2 \frac{\sqrt{\sigma^2 \log p}}{\|\beta\|_2}$ for some $c_1, c_2>2$. Then, Algorithm~\ref{algo:correlation_screening} satisfies $\Strue \subset \Shat$ with probability exceeding $1 - 6p^{-1}$ as long as $d \geq \ceil*{\frac{n}{\log p }}$.
\label{corr:mu_nu_conditions}
\end{corollary}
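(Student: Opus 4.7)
The plan is to mirror the proof of Corollary~\ref{corr:mu_conditions}: use Theorem~\ref{th:mu_nu_conditions} to obtain a sufficient condition on $d$, then absorb all parameter-dependent factors into the universal choice $d \geq \lceil n/\log p \rceil$ by invoking the corollary's sparsity bound and the Welch bound.

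First, I would start from the lower bound on $d$ supplied by Theorem~\ref{th:mu_nu_conditions}. Under the hypothesis $\frac{\betamin}{\|\beta\|_2} > 2 c_\mu c_1 \mu \sqrt{\log p} + 4 c_2 \frac{\sqrt{\sigma^2 \log p}}{\|\beta\|_2}$, the denominator satisfies
\begin{align*}
\frac{\betamin}{\|\beta\|_2} - 2 c_\mu \mu \sqrt{\log p} - \frac{4 \sqrt{\sigma^2 \log p}}{\|\beta\|_2}
> 2 c_\mu (c_1 - 1) \mu \sqrt{\log p} + 4 (c_2 - 1)\frac{\sqrt{\sigma^2 \log p}}{\|\beta\|_2},
\end{align*}
and, since both right-hand terms are positive, dropping the noise term yields the simpler sufficient requirement $d \geq \ceil*{\sqrt{k}\,/\,(2 c_\mu (c_1 - 1) \mu \sqrt{\log p})}$.

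Next, I would eliminate the residual dependence on $k$ and $\mu$. The sparsity assumption $k < \mu^{-2} / (c_\mu^2 \log p)$ gives $\sqrt{k} < \mu^{-1}/(c_\mu \sqrt{\log p})$, so the preceding bound is implied by $d \geq \ceil*{\mu^{-2}/(2 c_\mu^2 (c_1-1) \log p)}$. Because $p \geq 2n$, the Welch bound yields $\mu^{-2} \leq 2n$, whence
\begin{align*}
\frac{\mu^{-2}}{2 c_\mu^2 (c_1 - 1) \log p} \leq \frac{n}{c_\mu^2 (c_1 - 1) \log p} \leq \frac{n}{\log p},
\end{align*}
where the last inequality uses $c_\mu > 10 \sqrt{2}$ and $c_1 > 2$, so that $c_\mu^2 (c_1 - 1) > 200$. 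Hence $d \geq \ceil*{n/\log p}$ is sufficient. The probability statement $1 - 6 p^{-1}$ is inherited verbatim from Theorem~\ref{th:mu_nu_conditions}.

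I do not anticipate substantial obstacles: the argument is a direct chain of inequalities, and every tool (Theorem~\ref{th:mu_nu_conditions}, the coherence property's sparsity bound, the Welch bound) is already in hand. The only mild subtlety is confirming that the $k \leq n/\log p$ clause implicit in Lemma~\ref{lemma:StOC_SC} is automatically satisfied here, but this is immediate from $k < \mu^{-2}/(c_\mu^2 \log p) \leq 2n/(c_\mu^2 \log p) \leq n/\log p$, again using $c_\mu > 10 \sqrt{2}$.
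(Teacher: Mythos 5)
Your proposal is correct and follows essentially the same route as the paper's own proof: invoke Theorem~\ref{th:mu_nu_conditions}, use the strengthened MSR hypothesis to replace the denominator by $2c_\mu(c_1-1)\mu\sqrt{\log p} + 4(c_2-1)\sqrt{\sigma^2\log p}/\|\beta\|_2$, and then absorb the remaining dependence on $k$ and $\mu$ via the sparsity bound and the Welch bound $\mu^{-1}\leq\sqrt{2n}$. Your write-up is in fact slightly more explicit than the paper's (which leaves the final constant bookkeeping implicit), and your closing check that $k\leq n/\log p$ is automatic matches the remark the paper makes after Theorem~\ref{th:mu_nu_conditions}.
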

\begin{proof}
Since $\frac{\betamin}{\|\beta\|_2} > 2 c_1 c_{\mu} \mu \sqrt{\log p} + \frac{4 c_2 \sqrt{\sigma^2 \log p}}{\|\beta\|_2}$, we have that
\begin{align}
d \geq \ceil*{\frac{\sqrt{k}}{ 2(c_1 -1) c_{\mu} \mu \sqrt{\log p} + \frac{4 (c_2-1) \sqrt{\sigma^2 \log p}}{\|\beta\|_2} }}
\label{eq:Det2_d1}
\end{align}
is a sufficient condition for $d \geq \ceil*{\frac{\sqrt{k}}{\frac{\betamin}{\|\beta\|_2} - 2 c_{\mu} \mu \sqrt{\log p} - \frac{4 \sqrt{\sigma^2 \log p}}{\|\beta\|_2}}}$. The claim now follows because $d \geq \ceil*{\frac{n}{\log p }}$ is a sufficient condition for \eqref{eq:Det2_d1}, owing to the facts that $p\geq 2n$ and the Welch bound imply $\mu^{-1} \leq \sqrt{2n}$ and $k < \frac{\mu^{-2}}{c_\mu^2 \log{p}} \Rightarrow k\leq \frac{n}{\log p}$.
\end{proof}

\subsection{Discussion}
\label{ssec:ArbitraryDiscussion}
Both Theorem~\ref{th:mu_conditions} and Theorem~\ref{th:mu_nu_conditions} shed light on the feasibility of marginal correlation-based screening of linear models \emph{without} imposing a statistical prior on the design matrix. While Theorem~\ref{th:mu_conditions} in this regard provides the least restrictive results, it does suffer from the square-root bottleneck: $k = \mathcal{O}(\mu^{-1}) \Rightarrow k = \mathcal{O}(\sqrt{n})$. Theorem~\ref{th:mu_nu_conditions}, on the other hand, overcomes this bottleneck at the expense of uniform prior on the true model as long as $\log{p} = \mathcal{O}(\mu^{-1})$; in this case, the condition on the sparsity parameter becomes $k = \mathcal{O}\left(\mu^{-2}/\log{p}\right)$. Therefore, Theorem~\ref{th:mu_nu_conditions} allows for sparsity scaling as high as $k = \mathcal{O}(n/\log{p})$ for design matrices with $\mu = \mathcal{O}(n^{-1/2})$; see~\cite{bajwa2012two} for existence of such matrices. In addition, Theorem~\ref{th:mu_conditions} and Theorem~\ref{th:mu_nu_conditions} also differ from each other in terms of their respective constraints on $\frac{\betamin}{\|\beta\|_2}$ for feasibility of marginal correlation-based screening; the constraint in Theorem~\ref{th:mu_nu_conditions} is less restrictive than in Theorem~\ref{th:mu_conditions} for $\log{p} = \mathcal{O}(\mu^{-1})$.

\toedit{Given that the constraints on sparsity and $\frac{\betamin}{\|\beta\|_2}$ in Theorem~\ref{th:mu_nu_conditions} are less restrictive than in Theorem~\ref{th:mu_conditions}, it is instructive to specialize the results in Theorem~\ref{th:mu_nu_conditions} for screening of some specific family of linear models. For this purpose,}\todel{A natural question to ask at this point is whether Theorem~\ref{th:mu_nu_conditions} specializes to Theorem~\ref{th:main_result_sub_Gaussian}. The answer to this question is in the affirmative, except for some small penalties that one has to pay because of the fact that Theorem~\ref{th:mu_nu_conditions} does not exploit any sub-Gaussianity of the entries of $\Xm$ in its analysis. In order to illustrate this further,}
we consider the case of Gaussian design matrices and reproduce bounds on their worst-case and average coherences from~\cite{bajwa2012two}.
\begin{lemma}[\!\!\mbox{\cite[Theorem~8]{bajwa2012two}}]
Let $V = [V_{i,j}]$ be an $n \times p$ matrix with the entries $\{V_{i,j}\}_{i,j=1}^{n,p}$ independently distributed as $\mathcal{N}(0,1)$ and $60\log{p} \leq n \leq \frac{p-1}{4\log{p}}$. Suppose the design matrix $X$ is obtained by normalizing the columns of $V$, i.e., $X = V \text{diag}(1/\|V_1\|_2,\dots,1/\|V_p\|_2)$. Then, with probability exceeding $1 -  11p^{-1}$, we have
\begin{align*}
  \mu &\leq \frac{\sqrt{15 \log{p}}}{\sqrt{n}-\sqrt{12\log{p}}}, \quad \textnormal{and}\\
  \nu &\leq \frac{\sqrt{15 \log{p}}}{n-\sqrt{12n\log{p}}}.
\end{align*}
\label{lemma:Gaussian_Coherence}
\end{lemma}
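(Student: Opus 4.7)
The proof is a two-part concentration argument that decomposes $X_i^{\top} X_j = V_i^{\top} V_j / (\|V_i\|_2 \|V_j\|_2)$ and controls numerator and denominator separately, combining the pieces with a union bound whose three failure events together account for the $11 p^{-1}$ budget. The denominator step comes first: because $V_i \mapsto \|V_i\|_2$ is $1$-Lipschitz and $V_i \sim \mathcal{N}(0, I_n)$, Gaussian Lipschitz concentration (equivalently, the lower-tail Laurent--Massart bound for $\chi^2_n$) combined with $\mathbb{E}\|V_i\|_2 \geq \sqrt{n-1}$ yields $\Pr(\|V_i\|_2 \leq \sqrt{n} - \sqrt{12 \log p}) \leq C p^{-6}$, and a union bound over the $p$ columns produces $\min_i \|V_i\|_2 \geq \sqrt{n} - \sqrt{12 \log p}$ with failure probability of order $p^{-5}$. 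The lower hypothesis $60 \log p \leq n$ is precisely what guarantees this lower bound is positive and absorbs the $\sqrt{n-1} \to \sqrt{n}$ slack into the constant $12$.

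For the worst-case coherence $\mu$, I would condition on $V_j$ and observe that $V_i^{\top} V_j \mid V_j \sim \mathcal{N}(0, \|V_j\|_2^2)$, so that a standard Gaussian tail gives $\Pr(|V_i^{\top} V_j|/\|V_j\|_2 \geq \sqrt{15 \log p}) \leq 2 p^{-15/2}$. Union-bounding over the $\binom{p}{2}$ unordered pairs leaves $\mathcal{O}(p^{-11/2})$ probability of failure, and dividing by $\|V_i\|_2 \|V_j\|_2$ using the denominator bound from the previous step delivers $\mu \leq \sqrt{15 \log p}/(\sqrt{n}-\sqrt{12 \log p})$ on the intersected good event.

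The $\nu$ bound is the main obstacle, because $\sum_{j \neq i} X_i^{\top} X_j = X_i^{\top}(\sum_{j \neq i} X_j)$ couples $X_i$ to every other column and the per-pair conditioning trick used for $\mu$ no longer isolates a single Gaussian. The plan is to condition on the columns $\{V_j\}_{j \neq i}$, set $z_i := \sum_{j \neq i} X_j$, and use the resulting $V_i^{\top} z_i \mid z_i \sim \mathcal{N}(0, \|z_i\|_2^2)$; a Gaussian tail then controls $|X_i^{\top} z_i| = |V_i^{\top} z_i|/\|V_i\|_2$ in terms of $\|z_i\|_2$ and $\|V_i\|_2$, and a separate high-probability estimate $\|z_i\|_2 \leq c_1 \sqrt{p-1}$ (based on $\mathbb{E}\|z_i\|_2^2 = p-1$ together with a Hanson--Wright-type tail on the quadratic form $z_i^{\top} z_i$) supplies the additional $\sqrt{n}$ factor relative to the $\mu$ bound. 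Dividing the resulting estimate on $\max_i |X_i^{\top} z_i|$ by $p-1$ yields the advertised $\nu \leq \sqrt{15 \log p}/(n - \sqrt{12 n \log p})$. The upper hypothesis $n \leq (p-1)/(4 \log p)$ is exactly what keeps the off-diagonal contribution to $\|z_i\|_2^2$ of lower order than $p-1$, so that $\|z_i\|_2 = \mathcal{O}(\sqrt{p-1})$ is effective; the delicate part is juggling the three failure probabilities so that their sum remains at most $11 p^{-1}$ while the constants $12$ and $15$ survive in the final deterministic inequality.
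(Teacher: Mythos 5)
First, a point of context: the paper does not prove this lemma at all---it is imported verbatim from \cite[Theorem~8]{bajwa2012two}---so there is no in-paper proof to compare against and your reconstruction must be judged on its own terms. Your treatment of the column norms and of the worst-case coherence $\mu$ is correct and standard (Lipschitz concentration of $\|V_i\|_2$ around $\sqrt{n}$, the conditional Gaussian tail for $V_i^{\top}V_j/\|V_j\|_2$, union bounds over $p$ and over $\binom{p}{2}$ pairs), and your identification of the conditioning step $V_i^{\top}z_i \mid z_i \sim \mathcal{N}(0,\|z_i\|_2^2)$ as the entry point for $\nu$ is also the right move.

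The gap is in the step that controls $\|z_i\|_2$ for $z_i=\sum_{j\neq i}X_j$. First, $z_i$ is a sum of \emph{normalized} columns, i.e., of independent vectors uniform on the sphere, so $z_i^{\top}z_i$ is not a quadratic form in a Gaussian vector and Hanson--Wright does not apply as stated; the obvious repair of replacing $X_j$ by $V_j/\sqrt{n}$ incurs an error of order $(p-1)\sqrt{\log p/n}$ in $\|z_i\|_2$, which exceeds $\sqrt{p-1}$ by a factor of roughly $2\log p$ precisely in the regime $n\le (p-1)/(4\log p)$ of the lemma. Second, the bound you assert, $\|z_i\|_2\le c_1\sqrt{p-1}$ with constant $c_1$ at confidence $1-\mathcal{O}(p^{-2})$ per index, is stronger than what the available tools deliver: a bounded-differences (McDiarmid) argument, using that each unit-norm summand perturbs $\|z_i\|_2$ by at most $2$ and that $\mathbb{E}\|z_i\|_2\le\sqrt{p-1}$, gives only $\|z_i\|_2\le \sqrt{p-1}+2\sqrt{(p-1)\log p}\le 3\sqrt{(p-1)\log p}$ at that confidence level. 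Fortunately the weaker bound suffices: combined with the conditional Gaussian tail $|V_i^{\top}z_i|\le\sqrt{2\tau\log p}\,\|z_i\|_2$, the norm lower bound $\|V_i\|_2\ge\sqrt{n}-\sqrt{12\log p}$, and the hypothesis $p-1\ge 4n\log p$, it yields $\nu\le \tfrac{3}{2}\sqrt{2\tau}\,\sqrt{\log p}/(n-\sqrt{12n\log p})$, which is within the advertised constant for $\tau\le 10/3$; so you should replace the Hanson--Wright step by this martingale estimate. Finally, you have the roles of the two hypotheses on $n$ reversed: it is the \emph{lower} bound $n\ge 60\log p$ that keeps the off-diagonal fluctuation of $\|z_i\|_2^2$ (of order $(p/\sqrt{n})\sqrt{\log p}$) below $p-1$, whereas the \emph{upper} bound $n\le(p-1)/(4\log p)$ enters only in the last line, where it converts the factor $1/\sqrt{p-1}$ gained by averaging over $p-1$ terms into the extra $1/\sqrt{n}$ that separates the $\nu$ bound from the $\mu$ bound.
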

It can be seen from this lemma that Gaussian design matrices satisfy the coherence property for $\log{p} = \mathcal{O}(n^{1/2})$. We can therefore use \toedit{Lemma~\ref{lemma:Gaussian_Coherence} to specialize Corollary~\ref{corr:mu_nu_conditions} to Gaussian matrices and obtain the following result.  
\begin{corollary}
Let $V = [V_{i,j}]$ be an $n \times p$ matrix with the entries $\{V_{i,j}\}_{i,j=1}^{n,p}$ independently distributed as $\mathcal{N}(0,1)$, the design matrix $X$ be obtained by normalizing the columns of $V$, and $y=X\beta+\eta$ with $\beta$ a $k\mhyphen$sparse vector and the entries of $\eta$ independently distributed as $\mathcal{N}(0,\sigma^2)$. Further, assume $\Strue$ is drawn uniformly at random from $k$-subsets of $[[p]]$. Finally, suppose $p\geq \max \{2n, \exp(5)\}$, $n\geq 2$, $\log{p}<\frac{\sqrt{n}}{c_{\mu}\sqrt{15}}$, $k < \frac{n}{15 c_\mu^2 (\log{p})^2}$, and $\frac{\betamin}{\|\beta\|_2} > 2 \sqrt{15} c_{\mu} c_1 \frac{ \log{p}}{\sqrt{n}} + 4 c_2 \frac{\sqrt{\sigma^2 \log p}}{\|\beta\|_2}$ for some $c_1, c_2>2, c_{\mu}>10\sqrt{2}$. Then, Algorithm~\ref{algo:correlation_screening} satisfies $\Strue \subset \Shat$ with probability exceeding $1 - 17p^{-1}$ as long as $d \geq \ceil*{\frac{n}{\log p }}$.
\label{corr:mu_nu_Gaussian_conditions}
\end{corollary}
From Corollary~\ref{corr:mu_nu_Gaussian_conditions}, we can} conclude that screening of Gaussian linear models using Algorithm~\ref{algo:correlation_screening} can be carried out with $d \geq \ceil*{\frac{n}{\log p }}$ as long as: ($i$) $\log{p} = \mathcal{O}(n^{1/2})$, ($ii$) $k = \mathcal{O}(n/(\log{p})^2)$, and ($iii$) $\frac{\betamin}{\|\beta\|_2} = \Omega\left(\frac{\log{p}}{\sqrt{n}} + \frac{\sqrt{\sigma^2 \log p}}{\|\beta\|_2}\right)$. \toedit{We now compare this specialization of Theorem~\ref{th:mu_nu_conditions} for Gaussian matrices with the known results of SIS in the existing literature. To this end, we focus on the results reported in~\cite{fan2008sure}, which is one of the most influential SIS works for the family of Gaussian linear models.} \todel{In contrast to the screening condition presented in this paper, the analysis in \mbox{\cite{fan2008sure}} is carried out for design matrices that satisfy a certain concentration property. Since the said concentration property has only been shown in \mbox{\cite{fan2008sure}} to hold for Gaussian matrices, our discussion in the following is limited to Gaussian design matrices with independent entries.}\toedit{Specifically, we assume $\sigma = 0$ for the sake of simplicity of argument, and explicitly compare Corollary~\ref{corr:mu_nu_Gaussian_conditions} with \cite[Theorem~1]{fan2008sure} for the case of Gaussian design matrices with independent entries. The SIS results reported in \cite[Theorem~1]{fan2008sure} hold under four specific conditions. In particular, Condition~3 in \cite{fan2008sure} requires that: ($i$) the variance of the response variable is $\mathcal{O}(1)$, ($ii$) $\betamin \geq \frac{c_\kappa}{n^\kappa}$ for some $c_{\kappa} >0$, $\kappa \geq 0$, and ($iii$) $\min_{i\in\Strue}|\text{cov}(\beta_i^{-1}Y,X_i)| \geq c_3$ for some $c_3 > 0$. Notice that the $\mathcal{O}(1)$ variance condition is equivalent to having $\|\beta\|_2 = \mathcal{O}(1)$. We therefore also impose the condition $\|\beta\|_2 = \mathcal{O}(1)$ in Corollary~\ref{corr:mu_nu_Gaussian_conditions} for comparison purposes. 

Comparing the two results, we see that~\cite{fan2008sure} ensures sure screening with slightly less-restrictive constraints. In~\cite[Theorem~1]{fan2008sure}, SIS requires $\betamin = \Omega(n^{-\kappa})$ for $\kappa < 1/2$ and $\log p = \mathcal{O}(n^\alpha)$ for some $\alpha \in (0, 1 - 2\kappa)$. In contrast, substituting $\betamin = \Omega(n^{-\kappa})$ and $\log p = \mathcal{O}(n^\alpha)$ in Corollary~\ref{corr:mu_nu_Gaussian_conditions} leads to constraints of $\kappa < 1/2$ and $0<\alpha<\frac{1}{2}-\kappa$. The impact of this small gap in $\alpha$ for the two results can be more concretely analyzed in terms of the respective sparsity constraints. Specifically, \cite[Theorem~1]{fan2008sure} imposes the sparsity constraint $k=\mathcal{O}(n^{2 \kappa})$ for the sure screening result to hold. Invoking, the condition $\log p = \mathcal{O}(n^\alpha)$ with $\alpha \in (0, 1 - 2 \kappa)$ reduces this constraint to $k=\mathcal{O}(n^{1-\alpha})=\mathcal{O}\left(\frac{n}{\log p}\right)$, which is slightly less restrictive than the sparsity constraint of $k = \mathcal{O}(n/(\log{p})^2)$ imposed in Corollary~\ref{corr:mu_nu_Gaussian_conditions}.} 

\toedit{The aforementioned slight gaps in constraints lead to the following questions: are these gaps because Corollary~\ref{corr:mu_nu_Gaussian_conditions} is based on the coherence property, which is only a sufficient condition for the original condition; or are these gaps because the screening condition is fundamentally weaker than the conditions derived in~\cite[Theorem~1]{fan2008sure} for sure screening? In the next section, we show that the answer to the first question is in the affirmative by evaluating the screening condition explicitly for sub-Gaussian design matrices and then specializing Theorem~\ref{th:main_result_gen} to obtain tight conditions for sure screening of such matrices.}

\todel{To summarize, the ExSIS results derived in this paper coincide with the ones in \mbox{\cite{fan2008sure}} for the case of Gaussian design matrices. However, our results are more general in the sense that they explicitly bring out the dependence of Algorithm~\ref{algo:correlation_screening} on the SNR and the MSR, which is something missing in \mbox{\cite{fan2008sure}}.} \todel{, and they are applicable to sub-Gaussian design matrices.}

\todel{Comparing this with Corollary~\ref{corr:main_result_sub_Gaussian} in general and the discussion in Sec.~\ref{ssec:rand.mat.disc} in particular, we see that the general theory of Sec.~\ref{ssec:exsis_cohProperty} \emph{almost} matches with the specialized theory of Sec.~\ref{sec:RandomMatrices}. Specifically, compared to the constraints of $\log{p} = \mathcal{O}(n^{1/2})$, $k = \mathcal{O}(n/(\log{p})^2)$, and $\frac{\betamin}{\|\beta\|_2} = \Omega\left(\frac{\log{p}}{\sqrt{n}} + \frac{\sqrt{\sigma^2 \log p}}{\|\beta\|_2}\right)$ arising in Sec.~\ref{ssec:exsis_cohProperty} for Gaussian design matrices, Sec.~\ref{sec:RandomMatrices} results in slightly less restrictive constraints of $\log{p} = \mathcal{O}(n)$, $k = \mathcal{O}(n/\log{p})$, and $\frac{\betamin}{\|\beta\|_2} = \Omega\left(\sqrt{\frac{\log{p}}{n}} + \frac{\sqrt{\sigma^2 \log p}}{\|\beta\|_2}\right)$. These small gaps are the price one has to pay for the generality of Theorem~\ref{th:mu_nu_conditions}.}
\section{Screening of Sub-Gaussian Design Matrices}
\label{sec:RandomMatrices}

In this section, we \toedit{explicitly} characterize the implications of Theorem~\ref{th:main_result_gen} for ExSIS of the family of sub-Gaussian design matrices. As noted in Sec.~\ref{sec:GeneralConditions}, this effort primarily involves establishing the screening condition for sub-Gaussian matrices and specifying the parameter $b(n,p)$ for such matrices. We begin by first recalling the definition of a sub-Gaussian random variable.
\begin{definition}
A zero-mean random variable $\mathcal{X}$ is said to follow a sub-Gaussian distribution $subG(B)$ if there exists a sub-Gaussian parameter $B>0$ such that $\mathbb{E}[\exp (\lambda \mathcal{X})] \leq \exp \Big(\frac{B^2 \lambda^2}{2}\Big)$ for all $\lambda\in\mathbb{R}$.
\end{definition}
Our focus in this \todel{paper}\toedit{section} is on design matrices in which entries are first independently drawn from sub-Gaussian distributions and then the columns are normalized. In contrast to prior works, however, we do not require the (pre-normalized) entries to be identically distributed. Rather, we allow each independent variable to be distributed as a sub-Gaussian random variable with a different sub-Gaussian parameter. Thus, the ExSIS analysis of this section is applicable to design matrices in which different columns might have different sub-Gaussian distributions. It is also straightforward to extend our analysis to the case where all (and not just across column) entries of the design matrix are non-identically distributed; we do not focus on this extension in here for the sake of notational clarity.

\subsection{Main Result}
The ExSIS of linear models involving sub-Gaussian design matrices mainly requires establishing the screening condition and characterization of the parameter $b(n,p)$ for sub-Gaussian matrices. We accomplish this by individually deriving~\eqref{eqn:SC-1} and~\eqref{eqn:SC-2} in Definition~\ref{def:sc1} for sub-Gaussian design matrices in the following two lemmas.
\begin{lemma}
Let $V = [V_{i,j}]$ be an $n \times p$ matrix with the entries $\{V_{i,j}\}_{i,j=1}^{n,p}$ independently distributed as $subG(b_j)$ with variances $\mathbb{E}[V_{i,j}^2] = \sigma_j^2$. Suppose the design matrix $X$ is obtained by normalizing the columns of $V$, i.e., $X = V \text{diag}(1/\|V_1\|_2,\dots,1/\|V_p\|_2)$. Finally, fix an arbitrary $\beta \in \mathbb{R}^{p}$ that is $k\mhyphen$sparse, define $\frac{\sigma_*}{b_*} := \min \limits_{j\in\Strue} \frac{\sigma_j}{b_j}$, and let $\log p \leq \frac{n}{16} (\frac{\sigma_*}{4 b_*})^4$. Then, with probability exceeding $1 -  \frac{2k^2}{p^2}$, we have
\begin{align*}
\max \limits_{i\in \Strue} | \sum \limits_{\mathclap{\substack{j\in \Strue \\ j \neq i}}} X_i^{\top} \Xm_{j} \beta_j |
\leq \sqrt{\frac{8 \log p}{n}} \Big(\frac{b_*}{\sigma_*} \Big)\,\|\beta\|_2.
\end{align*}
\label{lemma:subGa}
\end{lemma}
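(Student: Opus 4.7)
My plan is to decompose each inner product as $X_i^\top X_j = V_i^\top V_j/(\|V_i\|_2\|V_j\|_2)$ and treat the full target sum as a single sub-Gaussian linear functional of $V_i$ once the remaining columns are conditioned on. A term-by-term Cauchy--Schwarz bound would introduce an unwanted $\sqrt{k}$ factor; the conditional view avoids it because the sub-Gaussian parameter of $V_i^\top u$ scales with $\|u\|_2$ rather than with $\sqrt{k}\,\max_j|X_i^\top X_j|$.

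First, I would carve out a ``good'' event from two families of concentration estimates. For each $j\in\Strue$, Bernstein's inequality applied to the sub-exponential sum $\|V_j\|_2^2 = \sum_l V_{l,j}^2$ shows $\|V_j\|_2^2 = n\sigma_j^2\,(1\pm\delta_j)$ with $\delta_j$ of order $\sqrt{(\log p)/n}\,(b_j/\sigma_j)^2$, which the hypothesis $\log p \leq \tfrac{n}{16}(\sigma_*/(4b_*))^4$ forces to at most $1/64$. For each pair $i\neq j$ in $\Strue$, conditioning on $V_i$ makes $V_i^\top V_j = \sum_l V_{l,i}V_{l,j}$ a sum of independent sub-Gaussians with parameter $b_j\|V_i\|_2$; Hoeffding together with the column-norm bound then yields the normalized pairwise correlation $|X_{j_1}^\top X_{j_2}| \lesssim \sqrt{(\log p)/n}\,(b_*/\sigma_*)$. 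A union bound over the $k$ column-norm events and the $\binom{k}{2}$ pair events keeps the total failure probability below $2k^2/p^2$.

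On this good event, fix $i\in\Strue$ and condition on $\{V_j : j\neq i\}$, so that $u := \sum_{j\in\Strue\setminus\{i\}} X_j\,\beta_j$ is deterministic. The target sum equals $V_i^\top u/\|V_i\|_2$; since $V_i$ is independent of $u$, Hoeffding gives $|V_i^\top u| \leq 2\,b_i\,\|u\|_2\,\sqrt{\log p}$ with conditional (hence unconditional) probability at least $1-2/p^2$. Combining with the column-norm event and $b_i/\sigma_i \leq b_*/\sigma_*$ yields
\[
|X_i^\top u| \;\leq\; \frac{2\,(b_*/\sigma_*)}{\sqrt{1-\delta_i}}\,\sqrt{\frac{\log p}{n}}\,\|u\|_2.
\]

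The main obstacle---and the most delicate step---is to show $\|u\|_2 \leq (1+o(1))\|\beta\|_2$ \emph{without} a $\sqrt{k}$ penalty; a crude bound via the pairwise correlation estimate and $\|\beta\|_1^2 \leq k\|\beta\|_2^2$ would reintroduce exactly that penalty. I would handle it by treating $\|u\|_2^2 = \sum_l \bigl(\sum_j V_{l,j}\beta_j/\|V_j\|_2\bigr)^2$ as a Hanson--Wright / Bernstein-type quadratic form in the independent entries of $V$: on the column-norm event, the normalizers $1/\|V_j\|_2$ are within $1\pm\delta_j$ of $1/(\sqrt{n}\sigma_j)$, so (up to this tiny relative error) the inner functional of the $l$-th row has variance $\|\beta\|_2^2/n$ and sub-Gaussian parameter $(b_*/\sigma_*)\|\beta\|_2/\sqrt{n}$, and Bernstein concentration for the sum-of-squares yields $\bigl|\|u\|_2^2 - \|\beta\|_2^2\bigr| \leq c\,\sqrt{(\log p)/n}\,(b_*/\sigma_*)^2\,\|\beta\|_2^2$---at most $\|\beta\|_2^2/64$ under the hypothesis. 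Thus $\|u\|_2 \leq \sqrt{2}\,\|\beta\|_2$ comfortably; plugging this into the displayed inequality, the constants match $2\cdot\sqrt{2} = \sqrt{8}$, and a final union bound over $i\in\Strue$ delivers the stated estimate with probability at least $1 - 2k^2/p^2$.
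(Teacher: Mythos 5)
Your overall architecture (lower-bound the column norms, exploit a conditional sub-Gaussian linear functional to avoid the $\sqrt{k}$ penalty, then union bound over $i\in\Strue$) is the right one, but you condition in the opposite direction from the paper, and that choice creates a gap you do not close. The paper fixes $i'\in\Strue$, conditions on $V_{i'}$, and treats $\sum_{j\neq i'}\sqrt{2/(n\sigma_j^2)}\,\tfrac{V_{i'}^\top}{\|V_{i'}\|_2}V_j\beta_j$ as a linear functional of the independent entries $\{V_{j,l}\}_{j\neq i'}$; its moment generating function factorizes and is bounded by $\exp\bigl(\tfrac{\lambda^2}{n}\sum_j b_j^2\beta_j^2/\sigma_j^2\bigr)\leq\exp\bigl(\tfrac{\lambda^2}{n}(b_*/\sigma_*)^2\|\beta\|_2^2\bigr)$, so the parameter $\|\beta\|_2/\sqrt{n}$ appears directly and no estimate of $\bigl\|\sum_j X_j\beta_j\bigr\|_2$ is ever needed. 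You instead condition on $\{V_j\}_{j\neq i}$ and must therefore prove $\|u\|_2\leq(1+o(1))\|\beta\|_2$ for $u=\sum_{j\in\Strue\setminus\{i\}}X_j\beta_j$ --- the step you yourself flag as the delicate one --- and your argument for it does not go through.

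The gap is twofold. First, once you ``fix the normalizers up to $1\pm\delta_j$,'' the row functionals $\sum_j V_{l,j}\beta_j/\|V_j\|_2$ are no longer independent across $l$, because $\|V_j\|_2$ is a function of the very entries being summed; conditioning on the column-norm event also changes the conditional law of those entries, so the row-wise Bernstein/Hanson--Wright step is not licensed as stated. Second, if you instead compare $u$ to $\tilde u:=\sum_j V_j\beta_j/(\sqrt{n}\sigma_j)$ (to which Hanson--Wright does apply for fixed $\beta$), the error is $\|u-\tilde u\|_2\leq\sum_j|\beta_j|\,\bigl|1-\|V_j\|_2/(\sqrt{n}\sigma_j)\bigr|\lesssim\delta\|\beta\|_1\leq\delta\sqrt{k}\,\|\beta\|_2$ with $\delta\asymp\sqrt{(\log p)/n}\,(b_*/\sigma_*)^2$: the $\sqrt{k}$ you set out to avoid reappears, and the lemma places no upper bound on $k$ (even $k\leq n/\log p$ would only make $\delta\sqrt{k}=O((b_*/\sigma_*)^2)$, not $o(1)$). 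Your constant accounting is also inconsistent: if $\|u\|_2$ were really as large as $\sqrt{2}\,\|\beta\|_2$, the extra factor $1/\sqrt{1-\delta_i}$ pushes the final constant past $\sqrt{8}$. Reversing the conditioning as the paper does removes the need for any bound on $\|u\|_2$ and resolves all of this at once.
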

\begin{lemma}
Let $V = [V_{i,j}]$ be an $n \times p$ matrix with the entries $\{V_{i,j}\}_{i,j=1}^{n,p}$ independently distributed as $subG(b_j)$ with variances $\mathbb{E}[V_{i,j}^2] = \sigma_j^2$. Suppose the design matrix $X$ is obtained by normalizing the columns of $V$, i.e., $X = V \text{diag}(1/\|V_1\|_2,\dots,1/\|V_p\|_2)$. Finally, fix an arbitrary $\beta \in \mathbb{R}^{p}$ that is $k\mhyphen$sparse, define $\frac{\sigma_*}{b_*} := \min \limits_{j\in\Strue} \frac{\sigma_j}{b_j}$, and let $\log p \leq \frac{n}{16} (\frac{\sigma_*}{4 b_*})^4$. Then, with probability exceeding $1 -  \frac{2(k+1)(p-k)}{p^2}$, we have
\begin{align*}
\max \limits_{i\in \Struec} | \sum \limits_{\mathclap{\substack{j\in \Strue}}} X_i^{\top} \Xm_{j} \beta_j |
\leq \sqrt{\frac{8 \log p}{n}} \Big(\frac{b_*}{\sigma_*} \Big)\,\|\beta\|_2.
\end{align*}
\label{lemma:subGb}
\end{lemma}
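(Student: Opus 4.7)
The plan is to exploit the structural fact that for $i\in\Struec$, the column $V_i$ is independent of $\{V_j\}_{j\in\Strue}$. This lets us treat the whole sum $\sum_{j\in\Strue} X_i^{\top} X_j \beta_j$ as a single linear functional of $V_i$, and thereby avoid the $\sqrt{k}$ loss that a term-by-term Cauchy--Schwarz bound (via the pairwise estimates underlying Lemma~\ref{lemma:subGa}) would incur.

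First, I would rewrite the target as a one-dimensional projection,
\[
\sum_{j\in\Strue} X_i^{\top} X_j \beta_j \;=\; X_i^{\top} w \;=\; \frac{V_i^{\top} w}{\|V_i\|_2}, \qquad w := X_{\Strue}\beta_{\Strue} \in \mathbb{R}^{n},
\]
and then fix $i\in\Struec$ and condition on $\{V_j\}_{j\in\Strue}$, so that $w$ is deterministic. Since the entries of $V_i$ are independent $subG(b_i)$, the linear functional $V_i^{\top} w$ is $subG(b_i\|w\|_2)$, and the standard sub-Gaussian tail bound (applied at $t=2b_i\|w\|_2\sqrt{\log p}$) gives
\[
\bigl|V_i^{\top} w\bigr| \;\leq\; 2\, b_i\, \|w\|_2\, \sqrt{\log p}
\]
with conditional (hence unconditional) probability at least $1-2/p^{2}$.

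Two auxiliary estimates then close the bound. For $\|w\|_2$, expand $\|w\|_2^{2}=\|\beta\|_2^{2}+\beta_{\Strue}^{\top}(X_{\Strue}^{\top}X_{\Strue}-I)\beta_{\Strue}$ and control the off-diagonal contribution by invoking the pairwise Hoeffding-type estimate on $X_j^{\top}X_{j'}$ ($j\neq j'\in\Strue$) that underlies Lemma~\ref{lemma:subGa}; the regime $\log p \leq \tfrac{n}{16}(\sigma_{*}/(4b_{*}))^{4}$ renders this correction a lower-order term, so $\|w\|_2 \leq (1+o(1))\|\beta\|_2$. For $\|V_i\|_2$, standard Bernstein-type concentration for the sum of squares of independent sub-Gaussians, combined with the same regime, yields $\|V_i\|_2 \geq \sqrt{n}\,\sigma_i/\sqrt{2}$ with probability at least $1-2/p^{2}$. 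Substituting both into the tail bound produces
\[
\bigl|X_i^{\top} w\bigr| \;\leq\; \sqrt{\frac{8\log p}{n}}\cdot \frac{b_i}{\sigma_i}\, \|\beta\|_2,
\]
which has the claimed form once the worst-case ratio $b_i/\sigma_i$ is identified with $b_{*}/\sigma_{*}$.

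Finally, a union bound over $i\in\Struec$ (combining the $p-k$ tail events on $V_i^{\top}w$, the $p-k$ lower-tail events on $\|V_i\|_2$, and the cross-term events inherited from Lemma~\ref{lemma:subGa}) yields a cumulative failure probability of the stated order $2(k+1)(p-k)/p^{2}$. The main obstacle I anticipate is the sharpness of the constant $\sqrt{8}$: the two slacks, one in $\|w\|_2$ (from the off-diagonal perturbation of $X_{\Strue}^{\top}X_{\Strue}$) and one in $\|V_i\|_2$ (the $\sqrt{2}$ deviation factor), must both be shown to contribute only lower-order corrections under the assumed scaling; this is precisely the regime in which those fluctuations are dominated by the $\sqrt{\log p/n}$ rate delivered by the sub-Gaussian tail bound, and is where the specific form of the hypothesis on $\log p$ earns its keep.
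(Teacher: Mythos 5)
Your overall strategy---condition so that the sum collapses to a single linear functional of one random column, then apply a one-shot sub-Gaussian tail bound to avoid the $\sqrt{k}$ loss---is the right instinct, but you condition in the opposite direction from the paper, and that choice creates two genuine gaps. The paper fixes $i'\in\Struec$, conditions on $V_{i'}$, and treats $\sum_{j\in\Strue}\sqrt{2/(n\sigma_j^2)}\,u_{i'}^\top V_j\beta_j$ (with $u_{i'}:=V_{i'}/\|V_{i'}\|_2$) as the sub-Gaussian variable in the randomness of $\{V_j\}_{j\in\Strue}$: the normalization of column $i'$ enters only through the unit vector $u_{i'}$ and is therefore handled exactly, the random factors $1/\|V_j\|_2$ for $j\in\Strue$ are replaced by the deterministic bounds $\sqrt{2/(n\sigma_j^2)}$ on a high-probability event, and the conditional moment generating function directly produces the variance proxy $\tfrac{1}{n}\sum_{j\in\Strue}(b_j/\sigma_j)^2\beta_j^2\le\tfrac{1}{n}(b_*/\sigma_*)^2\|\beta\|_2^2$; no quantity like $\|w\|_2$ ever appears. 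Your conditioning on $\{V_j\}_{j\in\Strue}$ instead makes the sub-Gaussian parameter of $V_i^\top w$ equal to $b_i\|w\|_2$ and forces a lower bound on $\|V_i\|_2$ in terms of $\sigma_i$, so your final estimate carries the ratio $b_i/\sigma_i$ for $i\in\Struec$. But $\sigma_*/b_*$ is defined as $\min_{j\in\Strue}\sigma_j/b_j$, an extremum over the \emph{active} columns only, so the step ``identify the worst-case ratio with $b_*/\sigma_*$'' is not available: the sub-Gaussian parameters of the inactive columns are simply not controlled by the lemma's hypotheses.

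The second gap is your claim that $\|w\|_2=\|X_{\Strue}\beta_{\Strue}\|_2\le(1+o(1))\|\beta\|_2$. First, the target constant is exactly $\sqrt{8}$, so even a genuine $(1+o(1))$ factor would break the stated bound. Second, and more seriously, the correction is not lower order under the lemma's hypotheses. Writing $\|w\|_2^2=\|\beta\|_2^2+\sum_{j\ne j'}\beta_j\beta_{j'}X_j^\top X_{j'}$ and invoking pairwise Hoeffding-type estimates $|X_j^\top X_{j'}|\lesssim\sqrt{\log p/n}$ controls the off-diagonal term only by $k\sqrt{\log p/n}\,\|\beta\|_2^2$ (or by $\sqrt{k\log p/n}\,\|\beta\|_2^2$ if one instead recycles the bound of Lemma~\ref{lemma:subGa}). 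The hypothesis $\log p\le\frac{n}{16}(\sigma_*/4b_*)^4$ constrains $\log p$ against $n$ but places no restriction on $k$, and in the regime $k\asymp n/\log p$ that this lemma must serve downstream (Corollary~\ref{corr:main_result_sub_Gaussian}) one has $k\sqrt{\log p/n}\to\infty$ and $\sqrt{k\log p/n}=O(1)$, so the perturbation is not negligible; a genuinely sharper operator-norm bound on $X_\Strue^\top X_\Strue$ would require a net or matrix-concentration argument that is not part of your proposal. The paper's conditioning direction avoids the need for any bound on $\|X_\Strue\beta_\Strue\|_2$ altogether, which is precisely why it closes with the exact constant.
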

The proofs of Lemma~\ref{lemma:subGa} and Lemma~\ref{lemma:subGb} are provided in Appendix~\ref{app:proof_RandomMatrices.subGa} and Appendix~\ref{app:proof_RandomMatrices.subGb}, respectively. It now follows from a simple union bound argument that the screening condition holds for sub-Gaussian design matrices with probability exceeding $1 - 2 (k+1) p^{-1}$. In particular, we have from Lemma~\ref{lemma:subGa} and Lemma~\ref{lemma:subGb} that $b(n,p) = \sqrt{\frac{8 \log p}{n} } (\frac{b_*}{\sigma_*})$ for sub-Gaussian matrices. We can now use this knowledge and Theorem~\ref{th:main_result_gen} to provide the main result for ExSIS of ultrahigh-dimensional linear models involving sub-Gaussian design matrices.
\begin{theorem}[ExSIS and Sub-Gaussian Matrices]
Let $V = [V_{i,j}]$ be an $n \times p$ matrix with the entries $\{V_{i,j}\}_{i,j=1}^{n,p}$ independently distributed as $subG(b_j)$ with variances $\mathbb{E}[V_{i,j}^2] = \sigma_j^2$. Suppose the design matrix $X$ is obtained by normalizing the columns of $V$, i.e., $X = V \text{diag}(1/\|V_1\|_2,\dots,1/\|V_p\|_2)$. Next, let $y=X\beta+\eta$ with $\beta$ a $k\mhyphen$sparse vector and the entries of $\eta$ independently distributed as $\mathcal{N}(0,\sigma^2)$. Finally, define $\frac{\sigma_*}{b_*} := \min \limits_{j\in\Strue} \frac{\sigma_j}{b_j}$ and $\betamin := \min \limits_{i\in\Strue} |\beta_i|$, and let $\log p \leq \frac{n}{16} (\frac{\sigma_*}{4 b_*})^4$ and $\frac{\betamin}{\|\beta\|_2} > 2 \sqrt{\frac{8 \log p}{n}}(\frac{b_*}{\sigma_*}) + 4 \frac{\sqrt{\sigma^2 \log p}}{\|\beta\|_2}$. Then Algorithm~\ref{algo:correlation_screening} guarantees $\Strue \subset \Shat$ with probability exceeding $1-2 (k+2) p^{-1}$ as long as
\begin{align*}
d \geq \ceil*{\frac{\sqrt{k}}{\frac{\betamin}{\|\beta\|_2} - 2 \sqrt{\frac{8 \log p}{n}} \Big(\frac{b_*}{\sigma_*} \Big) - \frac{4 \sqrt{\sigma^2 \log p}}{\|\beta\|_2}}} \text{.}
\end{align*}
\label{th:main_result_sub_Gaussian}
\end{theorem}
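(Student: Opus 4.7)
The plan is to specialize Theorem~\ref{th:main_result_gen} to sub-Gaussian design matrices by explicitly computing the parameter $b(n,p)$ of the screening condition and then bookkeeping the probabilities of the various events involved. The key observation is that Lemma~\ref{lemma:subGa} and Lemma~\ref{lemma:subGb} together assert precisely the two inequalities (SC-1) and (SC-2) for the value $b(n,p) = \sqrt{8 \log p / n}\,(b_*/\sigma_*)$, provided the regime condition $\log p \leq (n/16)(\sigma_*/(4 b_*))^4$ holds; this is exactly the assumption in the theorem statement. Thus the work reduces to (i) verifying that the screening condition holds with high probability for this specific $b(n,p)$, (ii) controlling the noise event $\Gn$, and (iii) invoking Theorem~\ref{th:main_result_gen} with the computed $b(n,p)$.

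First, I would apply Lemma~\ref{lemma:subGa} to bound the probability that (SC-1) fails by $2k^2/p^2$ and Lemma~\ref{lemma:subGb} to bound the probability that (SC-2) fails by $2(k+1)(p-k)/p^2$. A union bound yields that the $(k,b)$-screening condition holds with probability at least $1 - (2k^2 + 2(k+1)(p-k))/p^2$. Second, I would invoke the tail bound \eqref{eq:eta2.final} already derived in Sec.~\ref{ssec:gen.cond.disc}, which guarantees $\Pr(\Gn) \geq 1 - 2(p\sqrt{2\pi \log p})^{-1}$. A second union bound combines the screening-condition event with $\Gn$.

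Third, conditioned on the intersection of these two events, the hypotheses of Theorem~\ref{th:main_result_gen} are in force: the MSR assumption $\frac{\betamin}{\|\beta\|_2} > 2\sqrt{8 \log p/n}\,(b_*/\sigma_*) + 4\sqrt{\sigma^2 \log p}/\|\beta\|_2$ is precisely the instance of the Theorem~\ref{th:main_result_gen} hypothesis with $b(n,p)$ substituted, and the stated lower bound on $d$ is likewise obtained by substitution. Theorem~\ref{th:main_result_gen} therefore delivers $\Strue \subset \Shat$ on the combined event.

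The only remaining step is to simplify the failure probability $2(k^2 + (k+1)(p-k))/p^2 + 2(p\sqrt{2\pi\log p})^{-1}$ into the cleaner bound $2(k+2)/p$ asserted in the theorem. Expanding the numerator gives $2k^2 + 2(k+1)(p-k) = 2p(k+1) - 2k$, so the first term equals $2(k+1)/p - 2k/p^2 \leq 2(k+1)/p$, and the noise term is at most $2/p$ whenever $\sqrt{2\pi \log p} \geq 1$, which certainly holds in the ultrahigh-dimensional regime of interest. Adding the two bounds yields at most $2(k+2)/p$, completing the argument. There is no serious technical obstacle here beyond careful probability accounting; the real mathematical content lives in Lemmas~\ref{lemma:subGa} and~\ref{lemma:subGb} and in Theorem~\ref{th:main_result_gen}, all of which have already been established.
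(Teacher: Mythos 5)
Your proposal is correct and follows exactly the paper's own route: establish the screening condition with $b(n,p) = \sqrt{8\log p/n}\,(b_*/\sigma_*)$ via Lemma~\ref{lemma:subGa} and Lemma~\ref{lemma:subGb}, union-bound with the noise event $\Gn$ from \eqref{eq:eta2.final}, and invoke Theorem~\ref{th:main_result_gen}. Your explicit probability accounting (showing $2k^2 + 2(k+1)(p-k) = 2p(k+1) - 2k$ and bounding the total failure probability by $2(k+2)/p$) is in fact more detailed than the paper's one-line union-bound claim, and it checks out.
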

\begin{proof}
Let $\Gp$ be the event that the design matrix $X$ satisfies the screening condition with parameter $b(n,p) = \sqrt{\frac{8 \log p}{n} } (\frac{b_*}{\sigma_*})$. Further, let $\Gn$ be the event as defined in Theorem~\ref{th:main_result_gen}. It then follows from Lemma~\ref{lemma:subGa}, Lemma~\ref{lemma:subGb}, \eqref{eq:eta2.final}, and the union bound that the event $\Gp \cap \Gn$ holds with probability exceeding $1 - 2 (k+2) p^{-1}$. The advertised claim now follows directly from Theorem~\ref{th:main_result_gen}.
\end{proof}

\subsection{Discussion}\label{ssec:rand.mat.disc}
Since Theorem~\ref{th:main_result_sub_Gaussian} follows from Theorem~\ref{th:main_result_gen}, it shares many of the insights discussed in Sec.~\ref{ssec:gen.cond.disc}. In particular, Theorem~\ref{th:main_result_sub_Gaussian} allows for exponential scaling of the number of independent variables, $\log p \leq \frac{n}{16} (\frac{\sigma_*}{4 b_*})^4$, and dictates that the number of independent variables, $d$, retained after the screening stage be increased with an increase in the sparsity level and/or the number of independent variables, while it can be decreased with an increase in the SNR, MSR, and/or the number of samples. Notice that the lower bound on $d$ in Theorem~\ref{th:main_result_sub_Gaussian} does require knowledge of the sparsity level. However, this limitation can be overcome in a straightforward manner, as shown below.
\begin{corollary}
Let $V = [V_{i,j}]$ be an $n \times p$ matrix with the entries $\{V_{i,j}\}_{i,j=1}^{n,p}$ independently distributed as $subG(b_j^2)$ with variances $\mathbb{E}[V_{i,j}^2] = \sigma_j^2$. Suppose the design matrix $X$ is obtained by normalizing the columns of $V$, i.e., $X = V \text{diag}(1/\|V_1\|_2,\dots,1/\|V_p\|_2)$. Next, let $y=X\beta+\eta$ with $\beta$ a $k\mhyphen$sparse vector and the entries of $\eta$ independently distributed as $\mathcal{N}(0,\sigma^2)$. Further, define $\frac{\sigma_*}{b_*} := \min \limits_{j\in\Strue} \frac{\sigma_j}{b_j}$ and $\betamin := \min \limits_{i\in\Strue} |\beta_i|$. Finally, let $k \leq \frac{n}{\log p}$, $\log p \leq \frac{n}{16} (\frac{\sigma_*}{4 b_*})^4$, and $\frac{\betamin}{\|\beta\|_2} > 2 c_1 \sqrt{\frac{8 \log p}{n}}(\frac{b_*}{\sigma_*}) + 4 c_2 \frac{\sqrt{\sigma^2 \log p}}{\|\beta\|_2}$ for some constants $c_1, c_2>2$. Then Algorithm~\ref{algo:correlation_screening} guarantees $\Strue \subset \Shat$ with probability exceeding $1-2 (k+2) p^{-1}$ as long as $d \geq \ceil*{\frac{n}{\log p}}$.
\label{corr:main_result_sub_Gaussian}
\end{corollary}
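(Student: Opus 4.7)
The plan is to deduce Corollary~\ref{corr:main_result_sub_Gaussian} directly from Theorem~\ref{th:main_result_sub_Gaussian} by showing that the strengthened hypotheses of the corollary force the theorem's bound $\lceil \sqrt{k}/D\rceil$ (where $D$ denotes the denominator in Theorem~\ref{th:main_result_sub_Gaussian}) to be no larger than $\lceil n/\log p\rceil$. First I would verify that every hypothesis of Theorem~\ref{th:main_result_sub_Gaussian} continues to hold: the sub-Gaussianity set-up, the bound $\log p \leq \frac{n}{16}(\sigma_*/(4b_*))^4$, and the threshold on $\frac{\betamin}{\|\beta\|_2}$ all follow from the corollary's assumptions, the last one because $c_1,c_2>2>1$. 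Consequently, the sure-screening conclusion and its probability $1-2(k+2)p^{-1}$ carry over automatically, and only the numerical comparison $\lceil \sqrt{k}/D\rceil \leq \lceil n/\log p\rceil$ remains to be shown.

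Next I would lower bound $D$ by exploiting the slack built into the corollary's MSR condition. Subtracting the theorem's threshold from the corollary's stronger threshold gives
\begin{align*}
D \;\geq\; 2(c_1-1)\sqrt{\tfrac{8\log p}{n}}\Big(\tfrac{b_*}{\sigma_*}\Big) + 4(c_2-1)\tfrac{\sqrt{\sigma^2\log p}}{\|\beta\|_2} \;\geq\; 2(c_1-1)\sqrt{\tfrac{8\log p}{n}}\Big(\tfrac{b_*}{\sigma_*}\Big),
\end{align*}
since the second summand is non-negative. Combining this with the sparsity bound $\sqrt{k}\leq\sqrt{n/\log p}$ implied by $k\leq n/\log p$, I obtain
\begin{align*}
\frac{\sqrt{k}}{D} \;\leq\; \frac{\sqrt{n/\log p}}{2(c_1-1)\sqrt{8\log p/n}\,(b_*/\sigma_*)} \;=\; \frac{n/\log p}{2(c_1-1)\sqrt{8}\,(b_*/\sigma_*)}.
\end{align*}

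Finally I would close the argument using the elementary fact that the variance of a $subG(b_j)$ variable is bounded by $b_j^2$, so $\sigma_j\leq b_j$ for every $j$ and hence $b_*/\sigma_*=\max_{j\in\Strue}(b_j/\sigma_j)\geq 1$. Combined with $c_1>2$, this gives $2(c_1-1)\sqrt{8}\,(b_*/\sigma_*)>2\sqrt{8}>1$, so $\sqrt{k}/D < n/\log p$. Monotonicity of the ceiling then yields $\lceil\sqrt{k}/D\rceil \leq \lceil n/\log p\rceil$, and Theorem~\ref{th:main_result_sub_Gaussian} supplies the sure-screening conclusion at the advertised probability. The result is essentially a packaging corollary rather than a substantive extension, and I do not foresee a genuine obstacle; the only point requiring a moment's care is recognizing that $b_*/\sigma_*\geq 1$ is automatic for sub-Gaussian variables, which is what lets $c_1>2$ alone (with no further dimensional constraints) absorb the hidden $\sqrt{k}$ factor.
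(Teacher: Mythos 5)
Your proposal is correct and follows essentially the same route as the paper: both deduce the corollary from Theorem~\ref{th:main_result_sub_Gaussian} by lower-bounding the denominator via the slack $2(c_1-1)\sqrt{8\log p/n}\,(b_*/\sigma_*) + 4(c_2-1)\sqrt{\sigma^2\log p}/\|\beta\|_2$ and then invoking $k \leq n/\log p$ together with $\sigma_* \leq b_*$. You merely spell out the final numerical comparison more explicitly than the paper does.
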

\begin{proof}
Theorem~\ref{th:main_result_sub_Gaussian} and the condition $\frac{\betamin}{\|\beta\|_2} > 2 c_1 \sqrt{\frac{8 \log p}{n}} \Big(\frac{b_*}{\sigma_*} \Big) + \frac{4 c_2 \sqrt{\sigma^2 \log p}}{\|\beta\|_2}$ dictates
\begin{align}
d \geq \ceil*{\frac{\sqrt{k}}{ 2(c_1 -1) \sqrt{\frac{8 \log p}{n}} \Big(\frac{b_*}{\sigma_*} \Big) + \frac{4 (c_2 -1) \sqrt{\sigma^2 \log p}}{\|\beta\|_2}}}
\label{eq:G_d2}
\end{align}
for sure screening of sub-Gaussian design matrices. The claim now follows by noting that $d \geq \ceil*{\frac{n}{\log p}}$ is a sufficient condition for \eqref{eq:G_d2} since $k \leq \frac{n}{\log p}$ and $\sigma_* \leq b_*$ for sub-Gaussian random variables.
\end{proof}
\toedit{Following our discussion in Sec.~\ref{ssec:ArbitraryDiscussion}, a few remarks are in order now concerning the comparison between Corollary~\ref{corr:main_result_sub_Gaussian} and~\cite[Theorem~1]{fan2008sure}. We compare the two results for the case of Gaussian matrices with independent entries by once again imposing $\sigma=0$ and $\|\beta\|_2 = \mathcal{O}(1)$. Recall that \cite[Theorem~1]{fan2008sure} imposes constraints $\betamin = \Omega(n^{-\kappa})$ for $\kappa < 1/2$ and $\log p = \mathcal{O}(n^\alpha)$ for some $\alpha \in (0, 1 - 2\kappa)$. In comparison, substituting $\betamin=\Omega(n^{-\kappa})$ and $\log p = \mathcal{O}(n^\alpha)$ in Corollary~\ref{corr:main_result_sub_Gaussian} results in identical constraints of $\kappa < 1/2$ and $\alpha<1-2\kappa$. Further, the sparsity constraint $k = \mathcal{O}(n/ \log{p})$ in~\cite[Theorem~1]{fan2008sure} also matches the sparsity constraint imposed in Corollary~\ref{corr:main_result_sub_Gaussian}.} To summarize, the ExSIS results derived in this section coincide with the ones in \cite{fan2008sure} for the case of Gaussian design matrices. However, our results are more general in the sense that they explicitly bring out the dependence of Algorithm~\ref{algo:correlation_screening} on the SNR and the MSR, which is something missing in \cite{fan2008sure}.

\section{Experimental Results}
\label{sec:Results}

In this section, we present results from two synthetic-data experiments and one real-data experiment. In Section \ref{sec:Results_Oracle}, we demonstrate that our insights on the ExSIS procedure are not mere artifacts of our analysis. In Section \ref{sec:Results_Gaussian}, we analyze the performance of various regularization-based screening procedures in comparison to the ExSIS procedure. Finally, in Section \ref{sec:Results_IMDb}, we analyze the computational savings achieved from the use of ExSIS for screening of the feature space as part of sentiment analysis of IMDb movie reviews~\cite{maas-EtAl:2011}.

\subsection{Oracle-based Screening}
\label{sec:Results_Oracle}

In order to ensure the insights offered by Theorem~\ref{th:main_result_gen} are not mere artifacts of our analysis, we carry out numerical experiments to study the impact of relevant parameters on the screening performance of an \emph{oracle} that has perfect knowledge of the minimum value of $d$ required in Algorithm~\ref{algo:correlation_screening} to ensure $\Strue \subset \Shat$. In particular, we use these oracle-based experiments to verify the role of $b(n,p)$ and MSR in screening using Algorithm~\ref{algo:correlation_screening}, as specified by Theorem \ref{th:main_result_gen}. Since worst-case coherence is an indirect measure of pairwise similarity among the columns of $X$, and since $b(n,p)$ cannot be explicitly computed, we use $\mu$ as a surrogate for the value of $b(n,p)$ in our experiments.

The design matrix $\Xm\in \mathbb{R}^{n \times p}$ in our experiments is generated such that it consists of independent and identically distributed Gaussian entries, followed by normalization of the columns of $X$. Among other parameters, $n=500$, $p=2000$, $k=5$, and $\sigma =0$ in the experiments. The entries of $\Strue$ are chosen uniformly at random from $[[p]]$. Furthermore, the non-zero entries in the parameter vector $\beta$ are sampled from a uniform distribution $U[a,e]$; the value of $a$ is set at $1$ whereas $e\in[2,10]$. Finally, the experiments comprise the use of an oracle to find the minimum possible value of $d$ that can be used in Algorithm~\ref{algo:correlation_screening} while ensuring $\Strue \subset \Shat$. We refer to this minimum value of $d$ as the \emph{minimum model size} (MMS), and we use median of MMS over $400$ runs of the experiment as a metric of difficulty of screening.

To analyze the impact of increasing $\mu$ (equivalently, $b(n,p)$) and MSR on screening using Algorithm~\ref{algo:correlation_screening}, the numerical experiments are repeated for various values of $\mu$ and MSR. In particular, the worst-case coherence of $\Xm$ is varied by scaling its largest singular value, followed by normalization of the columns of $X$, while the MSR is increased by decreasing the value of $e$. In Fig.~\ref{fig:d_mu_oracle_median}, we plot the median MMS against $\mu$ for different MSR values. The experimental results of the oracle performance offer two interesting insights. First, the median MMS increases with $\mu$; this shows that any analysis for screening using Algorithm~\ref{algo:correlation_screening} needs to account for the similarity between the columns of $\Xm$. This relationship is captured by the parameter $b(n,p)$ in Theorem~\ref{th:main_result_gen}. Second, the difficulty of screening for an oracle increases with decreasing MSR values. This relationship is also reflected in Theorem~\ref{th:main_result_gen}: as $\| \beta \|_2$ increases for a fixed $e$, MSR decreases and the median MMS increases.

\begin{figure}[t!]
\centering
\subfigure[]{ \includegraphics[scale=0.45]{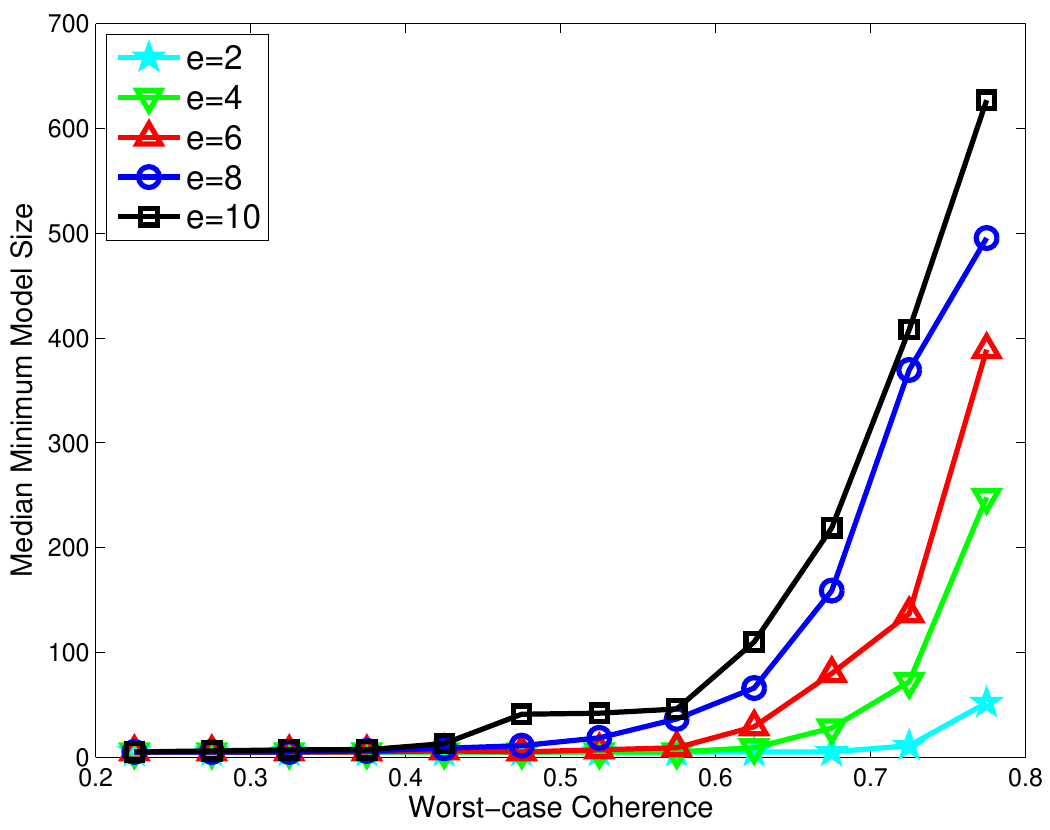} \label{fig:d_mu_oracle_median}}
\subfigure[]{ \includegraphics[scale=0.46]{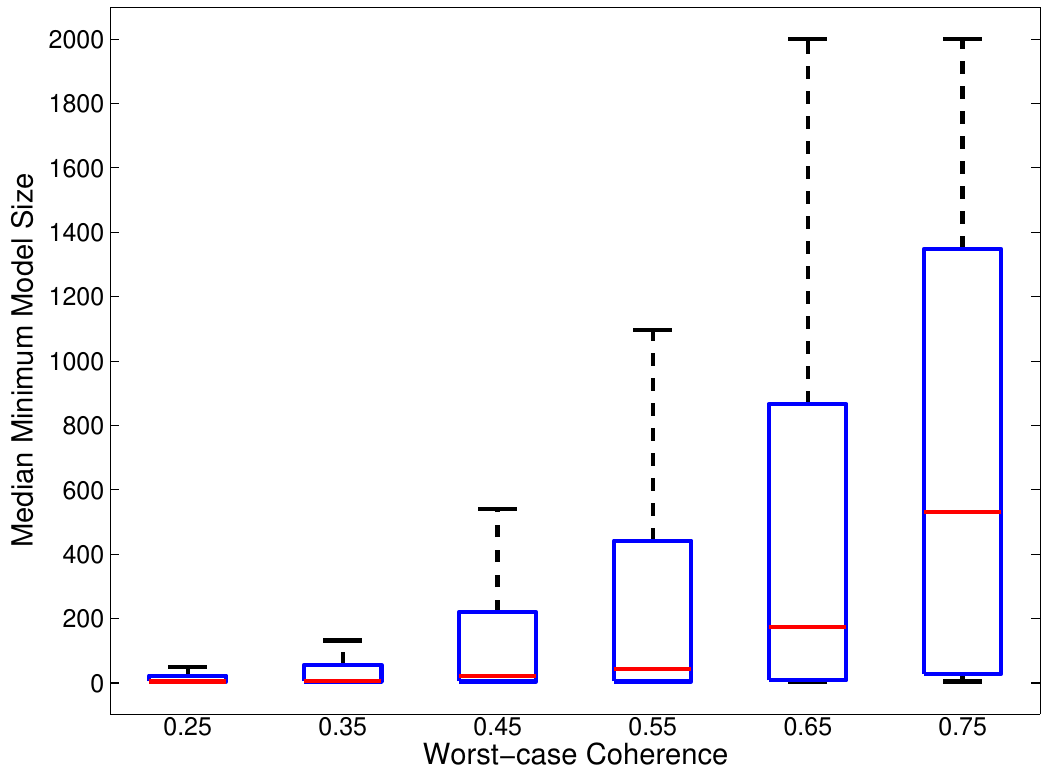} \label{fig:d_us_oracle_b_10_noOutliers}}
\caption{(a) For various values of worst-case coherence of the design matrix, the output of the oracle is computed. The relation between worst-case coherence and minimum model size is analyzed for various MSR values. (b) Boxplot of minimum model size for various values of worst-case coherence. The boxplot corresponds to the plot in Fig. \ref{fig:d_mu_oracle_median} for $e=10$. As the worst-case coherence becomes large, there are instances when oracle has to select all $2000$ predictors to ensure $\Strue \subset \Shat$ (see boxplot for $\mu = 0.65$ and $0.75$).}
\end{figure}

More interestingly, if we focus on the plot in Fig.~\ref{fig:d_mu_oracle_median} for $e=10$, and we plot the relationship between $\mu$ and median MMS along with the interquartile range of MMS for each value of $\mu$, it can be seen that there are instances when the oracle has to select all $2000$ predictors to ensure $\Strue \subset \Shat$ (see boxplot for $\mu = 0.65$ and $0.75$). In other words, no screening can be performed at all in these cases. This phenomenon is also reflected in Theorem \ref{th:main_result_gen}: when $b(n,p)$ becomes too large, the condition imposed on MSR is no longer true and our analysis cannot be used for screening using Algorithm~\ref{algo:correlation_screening}. Thus, this experiment shows that the condition imposed on MSR in Theorem \ref{th:main_result_gen} is not a mere artifact of our analysis: as $\mu$ (equivalently, $b(n,p)$) becomes large, one may not be able to perform screening at all while ensuring $\Strue \subset \Shat$.

\subsection{Comparison with Screening Procedures for LASSO-type Methods}
\label{sec:Results_Gaussian}
In this section, we use Gaussian data to compare the performance of ExSIS to that of screening procedures for LASSO-type methods. The design matrix $X\in\mathbb{R}^{n\times p}$ contains entries from a standard Gaussian distribution such that the pairwise correlation between the variables is $\rho$. In this experiment, we fix $n$ at $200$ while we consider two models with $p=2000$ and $p=5000$. For each value of $p$, we further consider two models with $\rho=0.0$ and $\rho=0.3$. Thus, in our experiments, we consider four setups with $(p,\rho)$ $=$ $(2000,0.0)$, $(2000,0.3)$, $(5000,0.0)$ and $(5000,0.3)$ to analyze the impact of dimensionality and pairwise correlation on performance of the screening procedures for LASSO-type methods in relation to ExSIS. For each of these four different setups, the model size is set at $|\Strue|=5$, and the locations of the non-zero coefficients in the parameter vector $\beta$ are chosen such that $\Strue$ is a uniformly at random subset of $[[p]]$. The values of the non-zero coefficients in the parameter vector $\beta$ are generated from $|z|+2$ where $z$ is distributed as a standard Gaussian random variable. Furthermore, the noise samples are generated from a standard Gaussian distribution, and the response vector $y$ is generated using \eqref{eq:sys}. Finally, the response vector $y$ and the columns of the design matrix $X$ are normalized to have unit norm. 

\begin{figure}[h!]
\centering
\subfigure[]{ \includegraphics[scale=0.3]{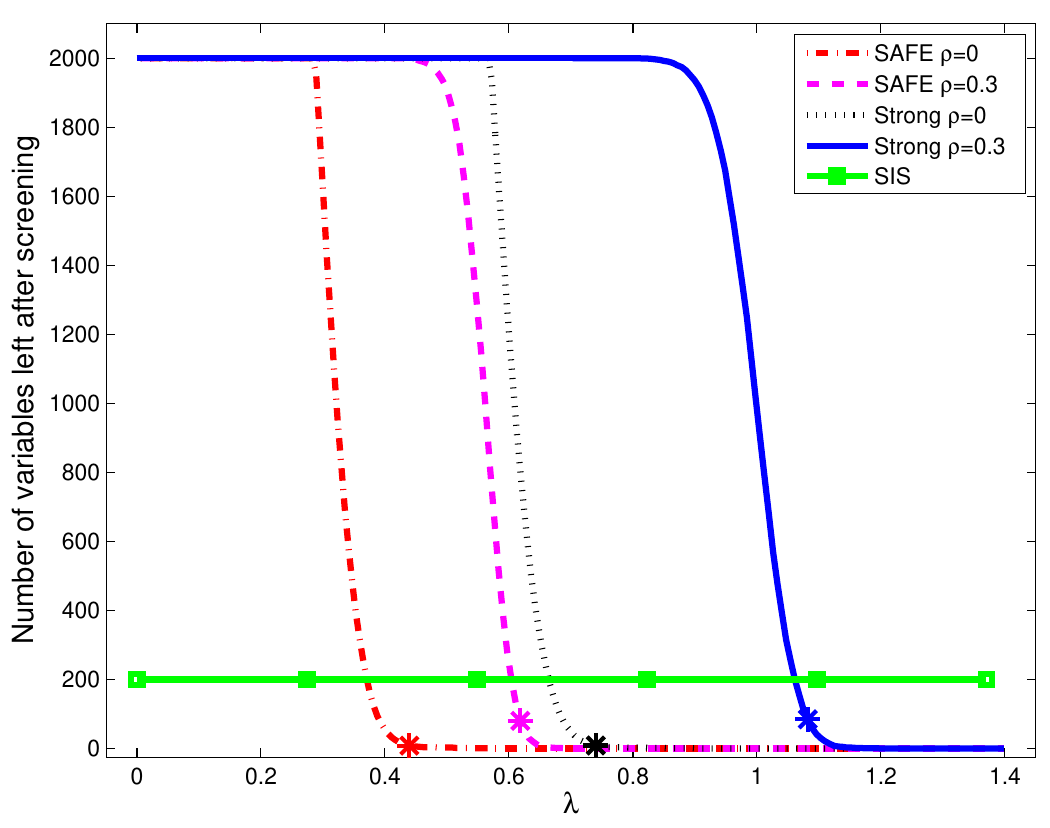} \label{fig:ScrMS_LASSO_p2000}}
\subfigure[]{ \includegraphics[scale=0.3]{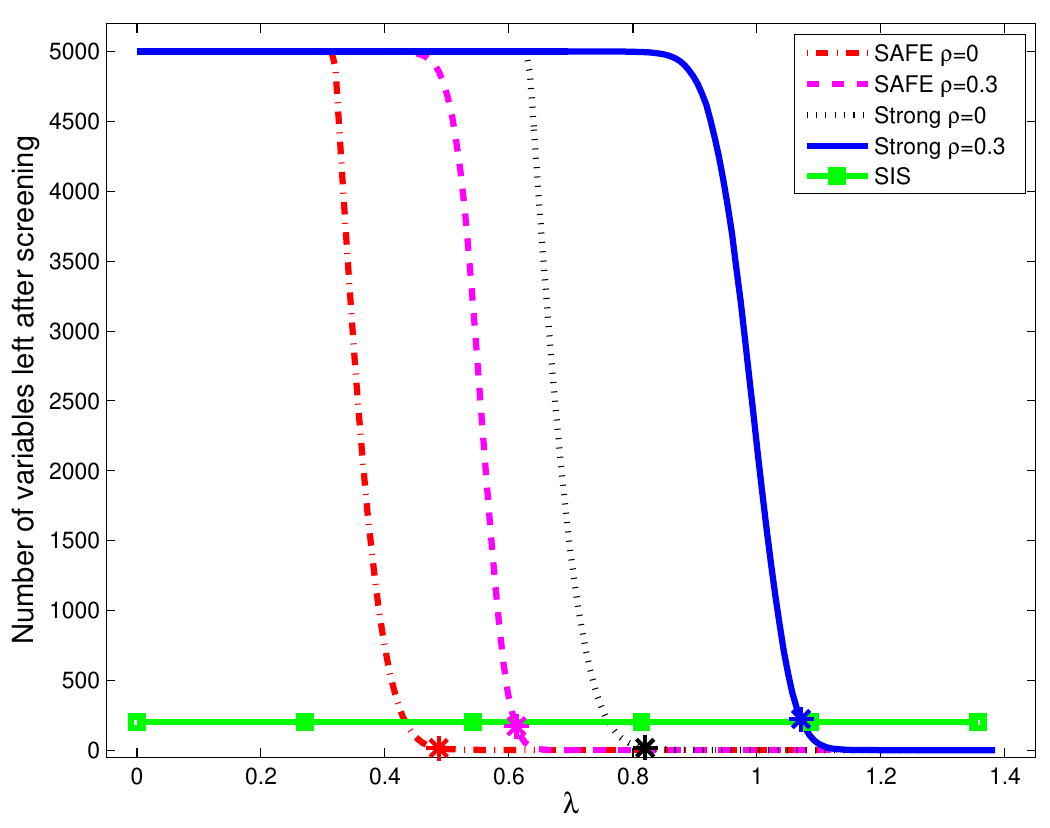} \label{fig:ScrMS_LASSO_p5000}}

\subfigure[]{ \includegraphics[scale=0.3]{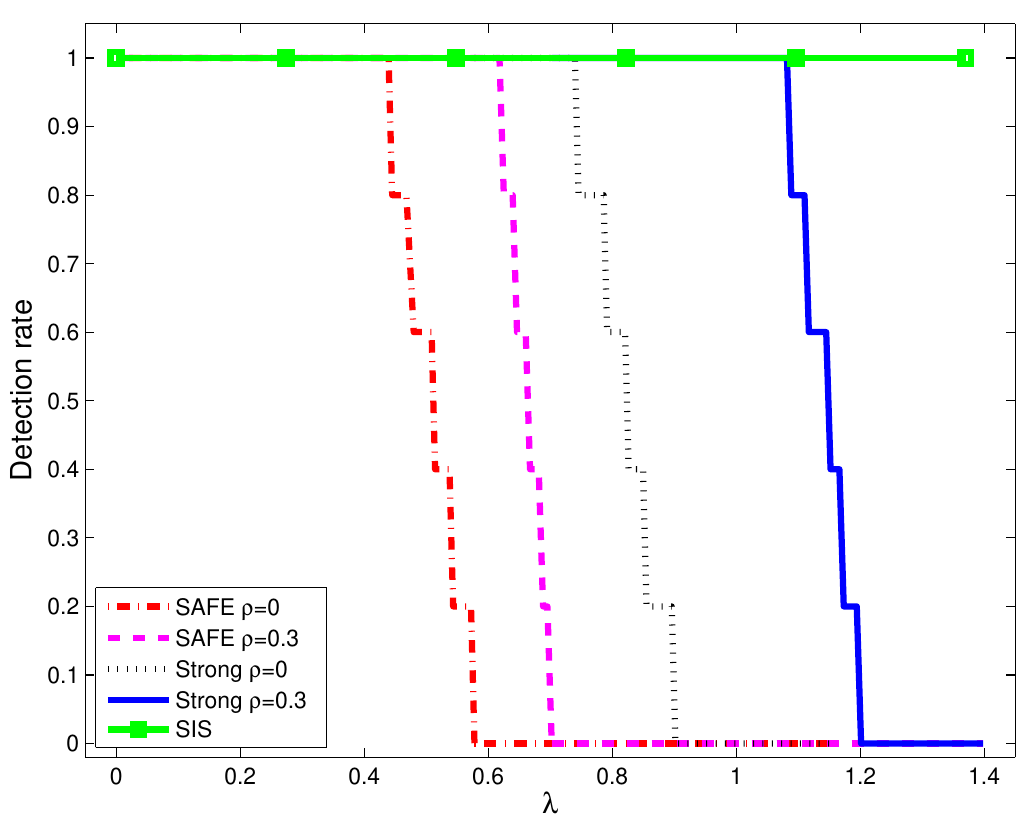} \label{fig:ScrVR_LASSO_p2000}}
\subfigure[]{ \includegraphics[scale=0.3]{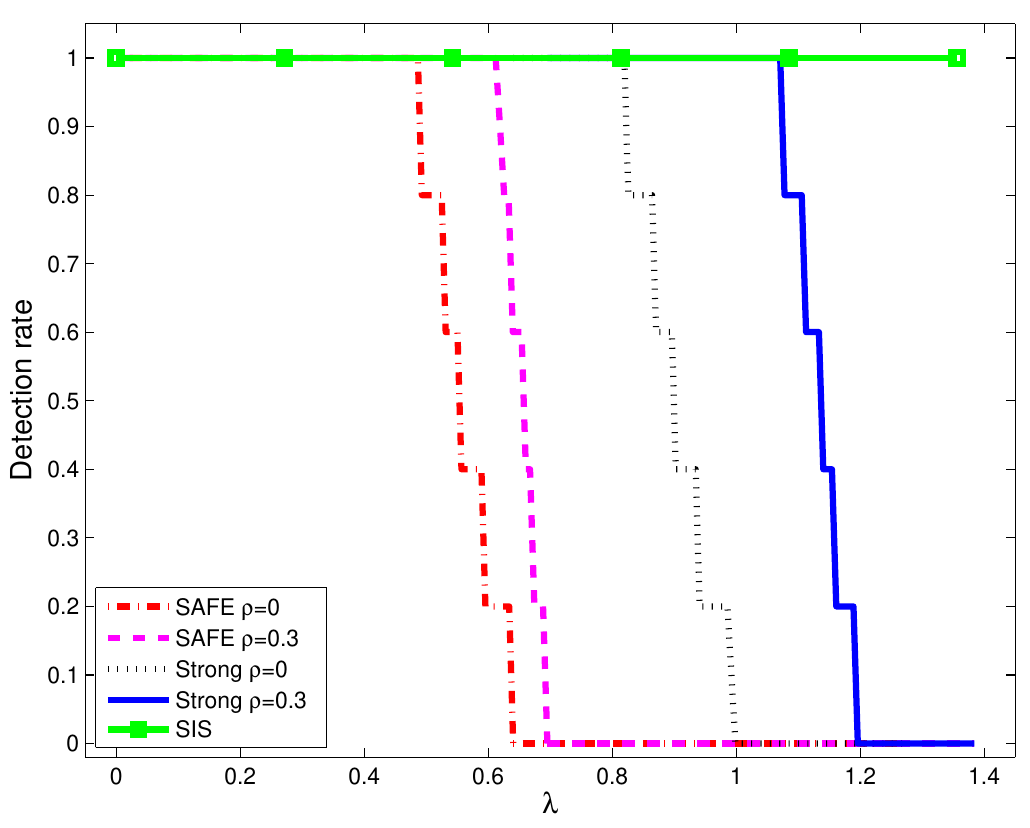} \label{fig:ScrVR_LASSO_p5000}}
\caption{Gaussian data matrices are screened using LASSO-based screening procedures for various model parameters. In (a) and (c), $p=2000$; whereas, in (b) and (d), $p=5000$. For each value of $p$, the screening experiment is repeated for $\rho=0.0$ and $\rho=0.3$. In each experiment, the model size after screening and the corresponding detection rate is evaluated for different values of the regularization parameter $\lambda$. The shown results are median over $100$ random draws of the data matrix $X$/parameter vector $\beta$/noise vector $\eta$ in \eqref{eq:sys}.}
\label{fig:Screening_LASSO}
\end{figure}

\begin{figure}[h!]
\centering
\subfigure[]{ \includegraphics[scale=0.3]{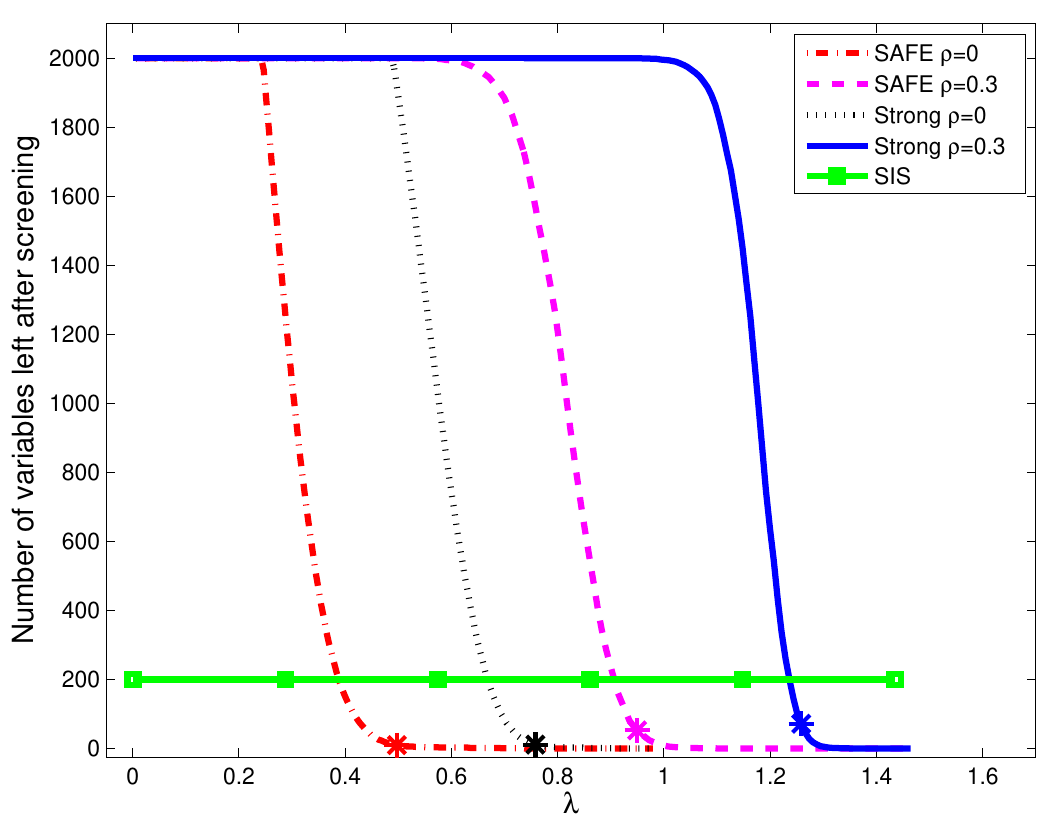} \label{fig:ScrMS_EN_p2000}}
\subfigure[]{ \includegraphics[scale=0.3]{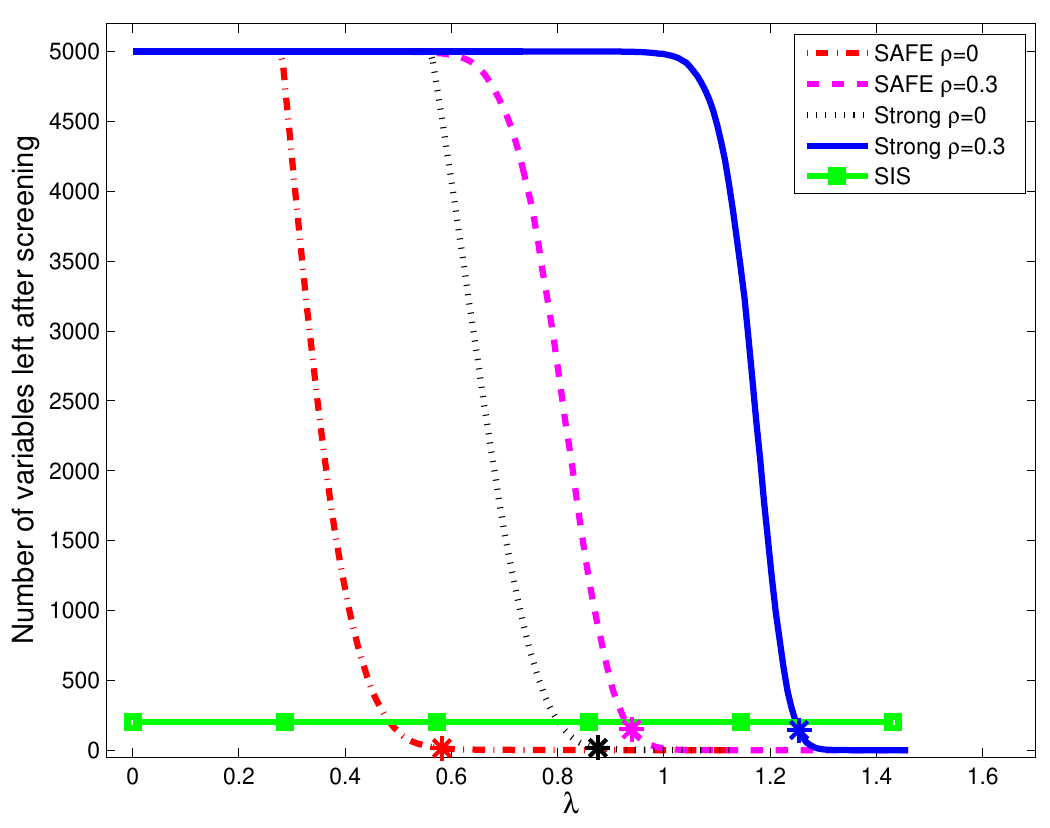} \label{fig:ScrMS_EN_p5000}}

\subfigure[]{ \includegraphics[scale=0.3]{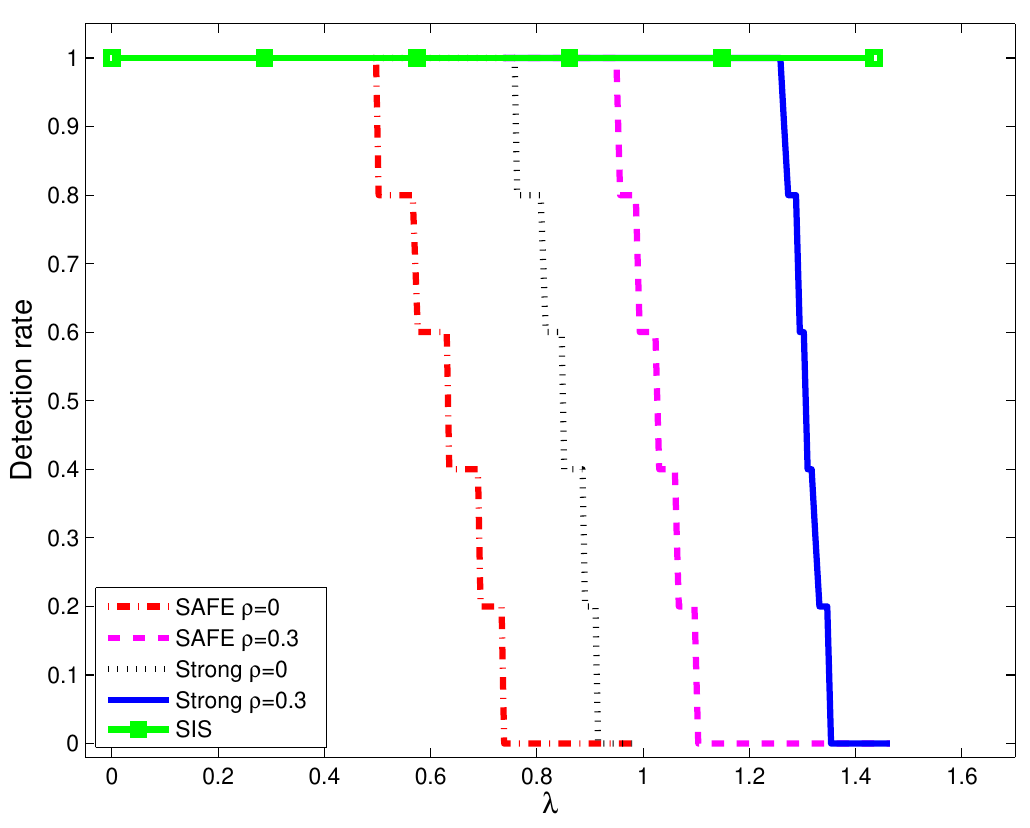} \label{fig:ScrVR_EN_p2000}}
\subfigure[]{ \includegraphics[scale=0.3]{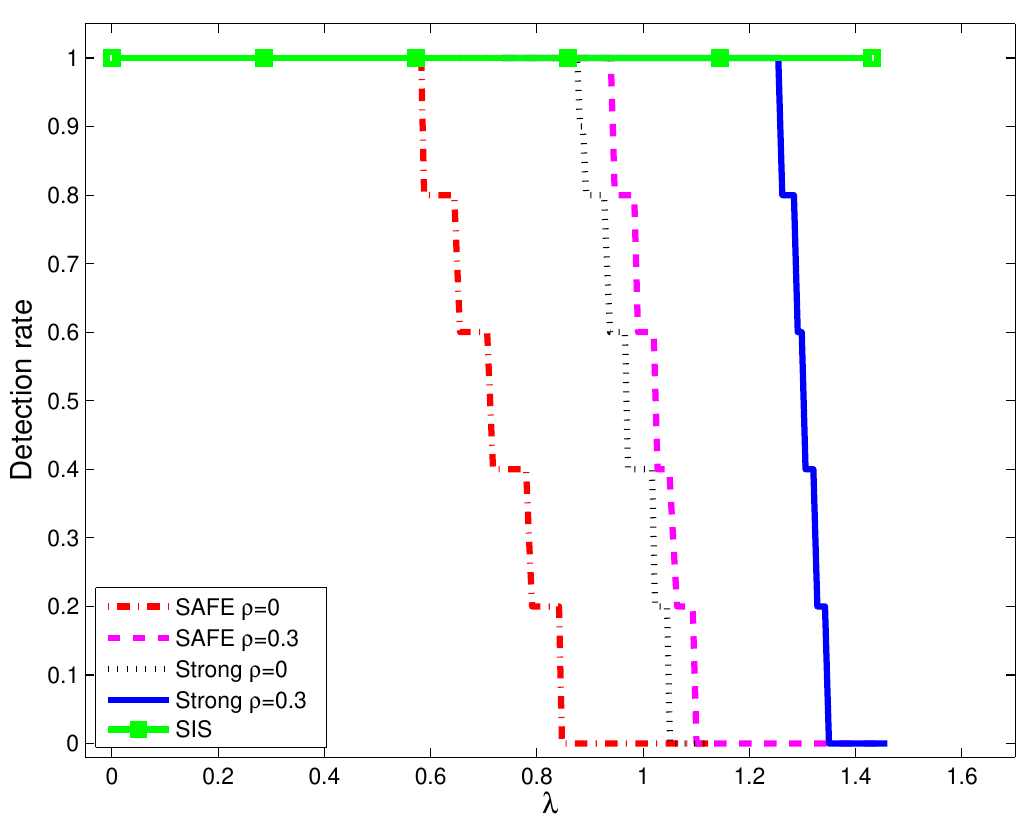} \label{fig:ScrVR_EN_p5000}}
\caption{Gaussian data matrices are screened using elastic net-based screening procedures for various model parameters. In (a) and (c), $p=2000$; whereas, in (b) and (d), $p=5000$. For each value of $p$, the screening experiment is repeated for $\rho=0.0$ and $\rho=0.3$. In each experiment, the model size after screening and the corresponding detection rate is evaluated for different values of the regularization parameter $\lambda$. The shown results are median over $100$ random draws of the data matrix $X$/parameter vector $\beta$/noise vector $\eta$ in \eqref{eq:sys}.}
\label{fig:Screening_EN}
\end{figure}

To analyze the performance of screening procedures for the LASSO method~\cite{tibshirani1996regression}, the columns of $X$ are screened using SAFE method~\cite{ghaoui2010safe} and strong rules~\cite{tibshirani2012strong} for LASSO. Recall that the LASSO problem can be expressed as
\begin{align*}
\hat{\beta} = \underset{\beta \in \mathbb{R}^p}{\operatorname{\arg\,min}} \frac{1}{2} \| y - X \beta\|_2^2 + \lambda \|\beta\|_1.
\end{align*}
For each of these screening methods, we perform screening of $X$ over a set of $200$ values of the regularization parameter $\lambda$ that are chosen uniformly from a linear scale. We compare the screening performance of SAFE method and strong rules for LASSO with the ExSIS method where $d=2n$. Note that our selection of the value of $d$ has some slack over the suggested value of $d$ from Corollary~\ref{corr:main_result_sub_Gaussian} because the conditions on $\log p$ and $\frac{\betamin}{\|\beta\|_2}$ in Corollary~\ref{corr:main_result_sub_Gaussian} don't hold true in this experiment.\footnote{In order for stated conditions to hold, we need significantly larger $n$ (and $p$); however, running LASSO on such large problems has high computational needs.} To compare the performance of these various screening methods, we use two metrics: ($i$) the model size (number of variables) after screening, which is defined as $|\Shat|$, and ($ii$) the detection rate, which is defined as $\frac{|\Strue \cap \Shat|}{|\Strue|}$. Using these metrics of performance, Fig.~\ref{fig:Screening_LASSO} shows the results of our simulations as median over $100$ draws of the random design matrix $X$/parameter vector $\beta$/noise vector $\eta$ in \eqref{eq:sys} for each of the four setups that we consider in this section. 

Next, the design matrix $X$ is also generated and screened using SAFE method~\cite{ghaoui2010safe} and strong rules~\cite{tibshirani2012strong} for elastic net~\cite{zou2005regularization} as explained before. Recall that the elastic net problem can be expressed as 
\begin{align*}
\hat{\beta} = \underset{\beta \in \mathbb{R}^p}{\operatorname{\arg\,min}} \frac{1}{2} \| y - X \beta\|_2^2 + \lambda_1 \|\beta\|_1 
+ \frac{1}{2} \lambda_2 \|\beta\|_2 .
\end{align*}
In our simulations, we use the parametrization $(\lambda_1, \lambda_2) = (\alpha \lambda, (1 - \alpha) \lambda)$ and we let $\alpha=0.5$. Fig.~\ref{fig:Screening_EN} shows the screening performance of SAFE method for elastic net, strong rules for elastic net and ExSIS over $100$ draws of the random design matrix $X$ for each of the four setups, as described before. In both Figs.~\ref{fig:Screening_LASSO} and \ref{fig:Screening_EN}, the largest value of $\lambda$ for which median detection rate is $1.0$ is labeled with an asterisk for each optimization-based screening procedure. In other words, for each screening procedure, only if $\lambda$ is smaller than the value of $\lambda$ labeled by an asterisk, the screening procedure maintains a median detection rate of $1.0$. Notice also that if the chosen value of $\lambda$ is too small, no variable is deleted by the screening procedure. Thus, as can be seen in both the figures, there is only a narrow range of values of $\lambda$ over which the optimization-based screening procedures are able to get rid of variables while maintaining a detection rate of $1.0$. Thus, from a practical point of view, it is not trivial to use SAFE method or strong rules for screening because there is no way of ensuring that the chosen value of $\lambda$ is within the narrow range of values of $\lambda$ for which significant screening can be performed while maintaining a detection rate of $1.0$. In comparison, the ExSIS method does not depend on the parameter $\lambda$, and in our experiments, it could always be used for screening while maintaining a median detection rate of $1.0$ (as shown in both Figs.~\ref{fig:Screening_LASSO} and \ref{fig:Screening_EN}). Before we end this discussion, note that, even within the narrow range of values of $\lambda$ for which SAFE method or strong rules can be used for screening while maintaining a detection rate of $1.0$, there is an even narrower range of values of $\lambda$ for which SAFE method or strong rules delete more variables than ExSIS.

\subsection{Sentiment Analysis of IMDb Movie Reviews and ExSIS}
\label{sec:Results_IMDb}

In high-dimensional classification, it has been shown that the presence of irrelevant variables increases the difficulty of classification, and the classification error tends to increase with dimensionality of the data model \cite{fan2008high, fan2014challenges}. Variable selection becomes important in high-dimensional classification as it can be used to discard the subset of irrelevant variables and reduce the dimensionality of the data model. Once the variable selection step is performed, classification can be performed based on the subset of relevant variables. In this section, we consider the problem of classifying IMDb movie reviews with positive or negative sentiments. In particular, we use variable selection to ($i$) reduce dimensionality of the data model, and ($ii$) learn a linear data model for classification (as explained later). To build and test our classification model, we make use of the IMDb movie reviews dataset~\cite{maas-EtAl:2011}, with the response being either a 1 (positive review) or a 0 (negative review), and we extract features using the \emph{term frequency-inverse document frequency method}~\cite{IRManning}. 

To increase the reliability of our results, the original dataset of 25K reviews is first randomly divided into five bins for five independent trials, with each bin further divided into 3K train and 2K test reviews. In each bin, before we use the 3K reviews for fitting a linear model on the feature space, we perform a preprocessing step to get rid of the features (words) that are highly correlated. Note that, when we refer to learning/fitting a linear model, we mean to estimate the vector $\beta$ in \eqref{eq:sys}. For learning the linear data model, we use LASSO as well as elastic net. For tuning the regularization parameter in LASSO as well as elastic net for each bin, we perform a five-fold cross validation experiment and choose the value of the regularization parameter that minimizes the mean square error on the training dataset. To evaluate the predictive power of the linear model, we use the notion of test \emph{true positive} (TP) rate, which is the percentage of the remaining 2K test movie reviews that are correctly classified by the model. For classification of the test reviews, we use the trained linear model to estimate the response for each test review. If the estimated response is less than 0.5 for a test review, the test review is assigned a negative sentiment and vice versa. The above procedure is repeated for each of the five bins of data and the average prediction accuracy is reported in Table \ref{table:IMDb}. The average model size before variable selection in the five runs of the experiment is $21,345$.

We also repeat the aforementioned experiment procedure but with a slight variation. For each of the five bins of data, we use Algorithm~\ref{algo:correlation_screening} to decrease dimensionality of the data model before performing the variable selection step. The objective of this variation in the experiment is to analyze the decrease in computational time and any change in the prediction accuracy when the variable selection step is preceded with a screening step using Algorithm~\ref{algo:correlation_screening}. To choose the value of $d$ in Algorithm~\ref{algo:correlation_screening}, we verify that the training data matrix for each fold of data satisfies the coherence property, and then we choose $d=2 n$ where $n=3000$. Note that the chosen value of $d$ has some slack over the suggested value of $d$ in Corollary~\ref{corr:mu_nu_conditions} because the condition on $\frac{\betamin}{\|\beta\|_2}$ in Corollary~\ref{corr:mu_nu_conditions} does not hold true in this experiment. After the screening step, we use LASSO and elastic net to learn and test a classification model as explained before. The results are reported in Table \ref{table:IMDb}.

\vspace{5 mm}
\begin{table}[h]
\caption{\toedit{Test} true positive (TP) rates and computational times for experiments on the IMDb dataset, averaged over the five folds of the IMDb dataset. The standard deviation is also reported in parenthesis.}
\label{table:IMDb}
\centering
\begin{tabular}{ |p{3cm}||p{3cm}|p{3cm}|  }
 \hline
 Training method  & Test TP rate &Training time\\
 \hline
 LASSO              & 83.01\;(0.77)	 & 388.35\;(26.66)\\
 ExSIS-LASSO        & 82.23\;(0.83)	 & 177.43\;(16.85)\\
 Elastic net 		& 84.35\;(0.89)  & 272.46\;(15.76) \\
 ExSIS-Elastic net  & 82.06\;(0.94)  & 111.20\;(4.65)  \\
 \hline
\end{tabular}
\end{table}
\vspace{5 mm}

Thus, we use LASSO and elastic net, both with and without screening, to train and test a linear model for classification of movie reviews. For each of these four cases, Table~\ref{table:IMDb} \todel{summarizes the train and test TP rates, which are the percentages of correctly classified reviews in train and test reviews, respectively}\toedit{reports the test TP rate, which is the percentage of correctly classified reviews within the test reviews}. The computational time needed for learning the linear model is also reported as an average over the five folds of data. It can be seen from the table that Algorithm~\ref{algo:correlation_screening} reduces the training time by a factor of more than two, while there is only a small decrease in predictive power of the trained model.

\section{Conclusion}
\label{sec:Conclusion}
In this paper, we studied marginal correlation-based screening for ultrahigh-dimensional linear models. In our analysis, we provided sufficient conditions for sure screening of arbitrary linear models, which gave us novel insights on the screening procedure, while our simulation results verified that these insights are reflective of the challenges associated with marginal correlation-based screening. \toedit{We also provided polynomial-time verifiable conditions for arbitrary (random or deterministic) matrices under which the dimension of the model can be reduced to almost the sample size. Furthermore, we specialized our sufficient conditions for sub-Gaussian linear models and demonstrated that our analysis coincides with the existing results for screening of such models.} \todel{Furthermore, we evaluated our sufficient conditions for the family of sub-Gaussian matrices as well as arbitrary (random or deterministic) matrices, and we provided verifiable conditions under which the dimension of the model can be reduced to almost the sample size.} In our experiments with real-world data, we demonstrated the computational savings that can be achieved through ExSIS in high dimensional variable selection.

\begin{appendices}

\section{Proof of Lemma~\ref{lemma:frac0}}\label{app:proof_GeneralConditions.lemma.frac01}
We begin by defining
\begin{align}
w^{(i)} := \Xm_{\widehat{\mathcal{S}}_{p_{i-1}}}^{\top} \Ym = \Xm_{\widehat{\mathcal{S}}_{p_{i-1}}} ^{\top} \Xm_{\Strue} \beta_{\Strue} + \Xm_{\widehat{\mathcal{S}}_{p_{i-1}}} ^{\top} \eta =: \xi^{(i)} + \tilde{\eta}^{(i)}
\label{eq:corr2}
\end{align}
where $w^{(i)} \in \mathbb{R}^{|\widehat{\mathcal{S}}_{p_{i-1}}|}$ measures the correlation of the observation vector $y$ with each column of $\Xm_{\widehat{\mathcal{S}}_{p_{i-1}}}$. To derive an upper bound on $\numb_i$, we derive upper and lower bounds on $\sum \limits_{\ja=1}^{p_{i-1}} |w_{\ja}^{(i)}|$. A simple upper bound on $\sum \limits_{\ja=1}^{p_{i-1}} |w_{\ja}^{(i)}|$ is:
\begin{align}
\sum \limits_{\ja=1}^{p_{i-1}} |w_{\ja}^{(i)}| = \sum \limits_{\ja=1}^{p_{i-1}} |\xi_{\ja}^{(i)} + \tilde{\eta}_{\ja}^{(i)}| \leq \sum \limits_{\ja=1}^{p_{i-1}} (|\xi_{\ja}^{(i)}| + |\tilde{\eta}_{\ja}^{(i)}|)
= \|\xi^{(i)}\|_1 + \|\tilde{\eta}^{(i)}\|_1 \text{.}
\label{eq:num1}
\end{align}

Next, we recall that $w = X^{\top} y$ and we further define $\xi$ and $\tilde{\eta}$ such that
\begin{align}
w = \Xm^{\top} \Ym = \Xm^{\top} \Xm_{\Strue} \beta_{\Strue} + \Xm^{\top} \eta =: \xi + \tilde{\eta} \text{.}
\label{eq:corr2b}
\end{align}
Now define $\mathcal{T}_i:= \{\ja \in \widehat{\mathcal{S}}_{p_{i-1}}:|w_{\ja}| \geq \min \limits_{\jb \in \Strue} |w_{\jb}| \}$. Then, a simple lower bound on $\sum \limits_{\ja=1}^{p_{i-1}} |w_{\ja}^{(i)}|$ is:
\begin{align}
\sum \limits_{\ja=1}^{p_{i-1}} |w_{\ja}^{(i)}|
  &= \sum \limits_{\ja \in \widehat{\mathcal{S}}_{p_{i-1}}} |w_{\ja}|
  \geq \sum \limits_{\ja \in \mathcal{T}_i} |w_{\ja}|
\nonumber\\
  & \geq \sum \limits_{\ja \in \mathcal{T}_i} \min \limits_{\jb\in\mathcal{T}_i} |w_{\jb}|
  \geqa \sum \limits_{\ja \in \mathcal{T}_i} \min \limits_{\jb\in\Strue} |w_{\jb}|
\nonumber\\
  & = t_i (\min \limits_{\jb\in\Strue} |w_{\jb}|)
    = t_i  (\min \limits_{\jb\in\Strue} |\xi_{\jb} + \tilde{\eta}_{\jb}|)
\nonumber\\
 & \geq t_i  (\min \limits_{\jb\in\Strue} |\xi_{\jb}| - \max \limits_{\jb\in\Strue} |\tilde{\eta}_{\jb}|) \text{,}
\label{eq:den1}
\end{align}
where (a) follows from definition of $\mathcal{T}_i$. Combining \eqref{eq:num1} with \eqref{eq:den1}, we get
\begin{align}
\numb_i \leq \frac{\|\xi^{(i)}\|_1 + \|\tilde{\eta}^{(i)}\|_1}{ \min \limits_{\jb\in\Strue} |\xi_{\jb}| - \max \limits_{\jb\in\Strue} |\tilde{\eta}_{\jb}|} \text{.}
\label{eq:frac0}
\end{align}
We next bound $\|\xi^{(i)}\|_1$, $\max \limits_{\jb\in\Strue} |\tilde{\eta}_{\jb}|$, $\|\tilde{\eta}^{(i)}\|_1$ and $\min \limits_{\jb\in\Strue} |\xi_{\jb}|$ separately. First, we derive an upper bound on $\|\xi^{(i)}\|_1$:
\begin{align}
\|\xi^{(i)}\|_1
& = \sum \limits_{\ja \in \widehat{\mathcal{S}}_{p_{i-1}} } \big|\sum \limits_{\jb \in \Strue} X_{\ja}^{\top} \Xm_{\jb} \beta_{\jb} \big|
\nonumber\\
& \eqb \sum \limits_{\ja \in \Strue } \big|\sum \limits_{\jb \in \Strue} X_{\ja}^{\top} \Xm_{\jb} \beta_{\jb} \big|
 + \sum \limits_{\ja \in \widehat{\mathcal{S}}_{p_{i-1}} \setminus \Strue } \big|\sum \limits_{\jb \in \Strue} X_{\ja}^{\top} \Xm_{\jb} \beta_{\jb} \big|
\nonumber\\
&\leqc \sum \limits_{\ja \in \Strue } \big|\sum \limits_{\mathclap{\substack{\jb \in \Strue \\ \jb \neq \ja}}} X_{\ja}^{\top} \Xm_{\jb} \beta_{\jb} \big|
 + \sum \limits_{\ja\in\Strue} |\beta_{\ja}| + \sum \limits_{\ja \in \widehat{\mathcal{S}}_{p_{i-1}} \setminus \Strue } \big|\sum \limits_{\jb \in \Strue} X_{\ja}^{\top} \Xm_{\jb} \beta_{\jb} \big|
\nonumber\\
& \leq k \max \limits_{\ja \in \Strue} \big|\sum \limits_{\mathclap{\substack{\jb \in \Strue \\ \jb \neq \ja}}} X_{\ja}^{\top} \Xm_{\jb} \beta_{\jb} \big|
 + (p_{i-1}-k) \max \limits_{\ja \in \widehat{\mathcal{S}}_{p_{i-1}} \setminus \Strue} \big|\sum \limits_{\jb \in \Strue} X_{\ja}^{\top} \Xm_{\jb} \beta_{\jb} \big| + \|\beta\|_1
 \nonumber\\
& \leq k \max \limits_{\ja \in \Strue} \big|\sum \limits_{\mathclap{\substack{\jb \in \Strue \\ \jb \neq \ja}}} X_{\ja}^{\top} \Xm_{\jb} \beta_{\jb} \big|
 + (p_{i-1}-k) \max \limits_{\ja \in \Struec} \big|\sum \limits_{\jb \in \Strue} X_{\ja}^{\top} \Xm_{\jb} \beta_{\jb} \big| + \|\beta\|_1,
\label{eq:xi_up_main}
\end{align}
where (b) follows since $\Strue \subset \widehat{\mathcal{S}}_{p_{i-1}}$ and (c) follows from the triangle inequality and the fact that the columns of $X$ are unit norm. Next, we have
\begin{align}
\max \limits_{\jb\in\Strue} |\tilde{\eta}_{\jb}|
\leq \|\tilde{\eta}\|_{\infty} \leq 2 \sqrt{\sigma^2 \log p},
\label{eq:eta2}
\end{align}
where the last inequality follows from conditioning on $\Gn$. Similarly, we have
\begin{align}
\|\tilde{\eta}^{(i)}\|_1
&= \sum \limits_{\ja \in \widehat{\mathcal{S}}_{p_{i-1}}} |X_{\ja}^{\top} \eta|
\leq \sum \limits_{\ja \in \widehat{\mathcal{S}}_{p_{i-1}}} \max \limits_{\jb \in \widehat{\mathcal{S}}_{p_{i-1}}} |X_{\jb}^{\top} \eta|
\nonumber\\
&= p_{i-1} (\max \limits_{\jb \in \widehat{\mathcal{S}}_{p_{i-1}}} |X_{\jb}^{\top} \eta|)
\leq 2 p_{i-1} \sqrt{\sigma^2 \log p}
\label{eq:eta3}
\end{align}
where the last inequality, again, follows from $\Gn$. Last, we lower bound $\underset{\ja \in \Strue} \min |\xi_{\ja}|$ as follows:
\begin{align}
\underset{\ja \in \Strue} \min |\xi_{\ja}|
& = \underset{\ja \in \Strue} \min | \sum \limits_{\jb \in \Strue} X_{\ja}^{\top} \Xm_{\jb} \beta_{\jb} |
\nonumber\\
& \eqd \underset{\ja \in \Strue} \min | \sum \limits_{\mathclap{\substack{\jb \in \Strue \\ \jb \neq \ja}}}  X_{\ja}^{\top} \Xm_{\jb} \beta_{\jb} + \beta_{\ja}  |
\nonumber\\
& \geq  \underset{\ja \in \Strue} \min |\beta_{\ja}| - \underset{\ja \in \Strue} \max  | \sum \limits_{\mathclap{\substack{\jb \in \Strue \\ \jb \neq \ja}}} X_{\ja}^{\top} \Xm_{\jb} \beta_{\jb} |
= \betamin - \underset{\ja \in \Strue} \max  | \sum \limits_{\mathclap{\substack{\jb \in \Strue \\ \jb \neq \ja}}} X_{\ja}^{\top} \Xm_{\jb} \beta_{\jb}|
\label{eq:xi_down_main}
\end{align}
where (d) follows because the columns of $X$ are unit norm. Combining \eqref{eq:xi_up_main}, \eqref{eq:eta2}, \eqref{eq:eta3}, \eqref{eq:xi_down_main} with \eqref{eq:frac0}, we obtain
\begin{align}
\numb_i \leq
\frac{k \max \limits_{\ja \in \Strue} | \sum \limits_{\mathclap{\substack{\jb \in \Strue \\ \jb \neq \ja}}} X_{\ja}^{\top} \Xm_{\jb} \beta_{\jb} | + (p_{i-1} -k) \max \limits_{\ja \in \Struec} |\sum \limits_{\jb \in \Strue} X_{\ja}^{\top} \Xm_{\jb} \beta_{\jb} | + \|\beta\|_1 + 2 p_{i-1} \sqrt{\sigma^2 \log p}}
 {\betamin - \underset{\ja \in \Strue} \max  | \sum \limits_{\mathclap{\substack{\ja \in \Strue \\ \jb \neq \ja}}} X_{\ja}^{\top} \Xm_{\jb} \beta_{\jb} |  - 2 \sqrt{\sigma^2 \log p} } \text{.}
\label{eq:frac0_a}
\end{align}
Assuming the $(k, b)\mhyphen$screening condition for the matrix $X$ holds, we finally obtain
\begin{align}
\numb_i &\leq
\frac{k b(n,p) + (p_{i-1} -k) b(n,p) + \|\beta\|_1 + 2 p_{i-1} \sqrt{\sigma^2 \log p}}
 {\betamin - b(n,p) - 2 \sqrt{\sigma^2 \log p} }
 \nonumber\\
&= \frac{ p_{i-1} b(n,p) + \|\beta\|_1 + 2 p_{i-1} \sqrt{\sigma^2 \log p}}
 {\betamin - b(n,p) - 2 \sqrt{\sigma^2 \log p} } \text{.}
\label{eq:frac0_b}
\end{align}
This completes the proof of the lemma.\qed

\section{Proof of Lemma~\ref{lemma:gen_one_iter_stoc}}\label{app:proof_GeneralConditions.one.iter.stoc}
For $\bar{\numb}_i < p_{i-1}$, we need
\begin{align}
 & \frac{ p_{i-1} b(n,p) \|\beta\|_2 + \|\beta\|_1 + 2 p_{i-1} \sqrt{\sigma^2 \log p}}
 {\betamin - b(n,p) \|\beta\|_2  - 2 \sqrt{\sigma^2 \log p} } < p_{i-1}
 \nonumber\\
& \Leftrightarrow
p_{i-1} > \frac{ \frac{ \|\beta\|_1}{\|\beta\|_2}}{\frac{\betamin}{\|\beta\|_2} - 2 b(n,p) - \frac{4 \sqrt{\sigma^2 \log p}}{\|\beta\|_2}}.
\label{eq:lemma_gen_scr_pt_0}
\end{align}
Since $\|\beta\|_1 \leq \sqrt{k} \|\beta\|_2$, we have
\begin{align}
p_{i-1} > \frac{\sqrt{k}}{\frac{\betamin}{\|\beta\|_2} - 2 b(n,p) - \frac{4 \sqrt{\sigma^2 \log p}}{\|\beta\|_2}}
\label{eq:lemma_gen_scr_pt_3}
\end{align}
as a sufficient condition for \eqref{eq:lemma_gen_scr_pt_0}. Thus, \eqref{eq:lemma_gen_scr_pt_3} is a sufficient condition for $\bar{\numb}_i < p_{i-1}$.\qed

\section{Proof of Lemma~\ref{lemma:Det_mu}}\label{app:proof_ArbitraryMatrices.mu}
Notice that $\max \limits_{i\in \Strue} | \sum \limits_{\mathclap{\substack{j\in \Strue \\ j \neq i}}} X_i^{\top} \Xm_{j} \beta_j |
 \leq \max \limits_{i\in \Strue}  \sum \limits_{\mathclap{\substack{j\in \Strue \\ j \neq i}}} |X_i^{\top} \Xm_{j} \beta_j | \leq \max \limits_{i\in \Strue} \sum \limits_{\mathclap{\substack{j\in \Strue \\ j \neq i}}} |X_i^{\top} \Xm_{j}| |\beta_j |$. Further, we have
\begin{align}
\max \limits_{i\in \Strue} \sum \limits_{\mathclap{\substack{j\in \Strue \\ j \neq i}}} |X_i^{\top} \Xm_{j}| |\beta_j |
 \leq \max \limits_{i\in \Strue}  \sum \limits_{\mathclap{\substack{j\in \Strue \\ j \neq i}}} \mu |\beta_j | \leq \mu \|\beta\|_1 \leq \mu \sqrt{k} \|\beta\|_2 \text{.}
\end{align}
An identical argument also establishes that $\max \limits_{i\in \Struec} |\sum \limits_{j\in \Strue} X_i^{\top} \Xm_{j} \beta_j | \leq  \mu \|\beta\|_1 \leq \mu \sqrt{k} \|\beta \|_2 $.\qed

\section{Proof of Lemma~\ref{lemma:StOC_SC}}\label{app:proof_ArbitraryMatrices.mu.nu}

The proof of Lemma~\ref{lemma:StOC_SC} relies on the following two lemmas, which are formally proved in \cite{bajwa2010gabor}.
\begin{lemma}[\!\!\mbox{\cite[Lemma~3]{bajwa2010gabor}}]
Assume $\Strue$ is drawn uniformly at random from $k$-subsets of $[[p]]$ and choose a parameter $a \geq 1$. Let $\epsilon \in [0,1)$ and $k \leq \min \{ \epsilon^2 \nu^{-2}, (1+a)^{-1}p \}$. Then, with probability exceeding $1 - 4k \exp( - \frac{(\epsilon - \sqrt{k} \nu)^2} {16(2+a^{-1})^2 \mu^2} )$, we have
\begin{align*}
\max \limits_{i\in \Strue} | \sum \limits_{\mathclap{\substack{j\in \Strue \\ j \neq i}}} X_i^{\top} \Xm_{j} \beta_j |
\leq \epsilon \, \|\beta\|_2.
\end{align*}
\label{lemma:StOCa}
\end{lemma}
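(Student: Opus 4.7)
The plan is to fix an arbitrary $i' \in [[p]]$, bound the conditional tail probability of $\mathcal{Z}_{i'} := \sum_{j \in \Strue,\, j\neq i'} X_{i'}^{\top} X_j \beta_j$ given $i' \in \Strue$, and then conclude by a union bound over the $k$ indices in $\Strue$, which generates the stated factor $4k$ in the failure probability.

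First, I would decompose $\mathcal{Z}_{i'} = \mathbb{E}[\mathcal{Z}_{i'} \mid i' \in \Strue] + (\mathcal{Z}_{i'} - \mathbb{E}[\mathcal{Z}_{i'} \mid i' \in \Strue])$ and bound the two pieces separately. For the mean, conditional on $i' \in \Strue$, the set $\Strue \setminus \{i'\}$ is a uniformly random $(k-1)$-subset of $[[p]] \setminus \{i'\}$; thinking of the nonzero magnitudes of $\beta$ as a fixed multiset placed at these random positions, the conditional mean factors as (the sum of the remaining magnitudes) times $\tfrac{1}{p-1}\sum_{j \neq i'} X_{i'}^{\top} X_j$. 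The first factor is at most $\|\beta\|_1 \leq \sqrt{k}\,\|\beta\|_2$, and the second is at most $\nu$ by the definition of the average coherence; hence $|\mathbb{E}[\mathcal{Z}_{i'} \mid i' \in \Strue]| \leq \sqrt{k}\,\nu\,\|\beta\|_2$. This is precisely the bias one has to subtract from $\epsilon\|\beta\|_2$ before concentration kicks in, which is why $\sqrt{k}\nu$ appears inside the exponent, and why the hypothesis $k \leq \epsilon^2 \nu^{-2}$ is needed to make the exponent non-trivial.

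For the fluctuation around this conditional mean, I would use a Bernoullization argument parameterized by $a$ to reduce uniform-without-replacement sampling to an i.i.d.~Bernoulli model: draw $\widetilde{\mathcal{S}}$ by independently including each index of $[[p]]$ with probability $q = (1+a^{-1})\,k/p$. Under this model the summands become independent and each is bounded in magnitude by $\mu\,|\beta_j|$, so Hoeffding's inequality yields a sub-Gaussian tail with variance proxy proportional to $\mu^2\|\beta\|_2^2$. The uniform-subset model is then recovered by conditioning $\widetilde{\mathcal{S}}$ on its cardinality; the overshoot parameter $a$ is chosen so that $\Pr[|\widetilde{\mathcal{S}}| \geq k]$ is large, at the price of inflating the variance proxy by roughly $(1+a^{-1})$. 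The hypothesis $k \leq (1+a)^{-1}p$ is exactly what keeps $q \leq 1$ in this construction. Tracking the resulting constants produces a per-$i'$ tail bound of $4\exp\!\bigl(-(\epsilon-\sqrt{k}\nu)^2 / (16(2+a^{-1})^2\mu^2)\bigr)$, and a final union bound over the $k$ indices $i' \in \Strue$ yields the stated probability $1 - 4k\exp(\cdot)$.

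The main technical obstacle I anticipate is the bookkeeping required to extract the precise constant $16(2+a^{-1})^2$: it combines a factor of $2$ from the Hoeffding exponent, an additional factor of $2$ from splitting into positive and negative tails, and a factor $(1+a^{-1})$ from the inflated Bernoulli sampling probability, which compound into the $(2+a^{-1})^2$ appearing in the denominator. A secondary subtlety is handling the coupling between $\Strue$ and $\beta$ cleanly (since $\beta$ is supported on $\Strue$ by hypothesis): the cleanest route is to parameterize $\beta$ by a fixed magnitude sequence placed at the random positions of $\Strue$, so that the only randomness in the problem lives in the placement, not in the magnitudes.
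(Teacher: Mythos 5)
First, a point of reference: the paper does not prove this statement at all---it is imported verbatim as Lemma~3 of \cite{bajwa2010gabor}, with an explicit pointer to that reference for the proof, so there is no in-paper argument to compare against. Judged on its own terms, your architecture is the right one and correctly attributes each feature of the bound: conditioning on $i' \in \Strue$; the bias bound $|\mathbb{E}[\mathcal{Z}_{i'} \mid i'\in\Strue]| \le \sqrt{k}\,\nu\,\|\beta\|_2$ via the average coherence (whence the $\epsilon - \sqrt{k}\nu$ shift and the need for $k\le\epsilon^2\nu^{-2}$); a sub-Gaussian fluctuation with variance proxy of order $\mu^2\|\beta\|_2^2$ (whence the $\mu^2$ in the exponent); and a union bound contributing the factor $k$ (more precisely, one unions over all $i'\in[[p]]$ the events $\{i'\in\Strue\}\cap\{|\mathcal{Z}_{i'}|>\epsilon\|\beta\|_2\}$, each of probability $\tfrac{k}{p}$ times a conditional tail, which sums to $k$ times that tail). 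The fixed-magnitudes-at-random-positions parameterization is also the right way to handle the coupling between $\beta$ and $\Strue$.

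The gap is in the concentration step, which is the entire technical content of the lemma and which you only gesture at. (i) Transferring a Hoeffding bound from the i.i.d.\ Bernoulli model back to the exact-$k$ uniform model by ``conditioning on cardinality'' is not automatic: the event $\{|\mathcal{Z}_{i'}|>\epsilon\|\beta\|_2\}$ is not monotone in the subset, so the standard domination arguments for monotone events do not apply, and conditioning on $|\widetilde{\mathcal{S}}|=k$ destroys exactly the independence you invoked to apply Hoeffding. One must instead run a martingale (Azuma/McDiarmid) argument directly on the without-replacement process, or use Hoeffding's convex-ordering reduction for sampling without replacement, or carry out the Bernoulli comparison with explicit prefactors; the sketch does none of these. (ii) Your accounting of the hypothesis is inconsistent: with $q=(1+a^{-1})k/p$, the condition $q\le 1$ reads $k\le \tfrac{a}{a+1}\,p$, not the stated $k\le(1+a)^{-1}p$, so either your choice of $q$ or the claimed role of that hypothesis is wrong. (iii) Under Bernoullization the support size is random, so the fixed multiset of $k$ magnitudes cannot simply be ``placed'' on $\widetilde{\mathcal{S}}$; this modeling wrinkle needs to be resolved before Hoeffding can even be stated. (iv) The per-index prefactor $4$ and the constant $16(2+a^{-1})^2$ are asserted rather than derived. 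None of this is fatal to the strategy, but as written the proposal defers precisely the part of the argument that makes the lemma true with the stated constants.
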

\begin{lemma}[\!\!\mbox{\cite[Lemma~4]{bajwa2010gabor}}]
Assume $\Strue$ is drawn uniformly at random from $k$-subsets of $[[p]]$ and choose a parameter $a \geq 1$. Let $\epsilon \in [0,1)$ and $k \leq \min \{ \epsilon^2 \nu^{-2}, (1+a)^{-1}p \}$. Then, with probability exceeding $1 - 4(p-k) \exp( - \frac{(\epsilon - \sqrt{k} \nu)^2} {8(2+a^{-1})^2 \mu^2} )$, we have
\begin{align*}
\max \limits_{i\in \Struec} | \sum \limits_{\mathclap{\substack{j\in \Strue}}} X_i^{\top} \Xm_{j} \beta_j |
\leq \epsilon \, \|\beta\|_2.
\end{align*}
\label{lemma:StOCb}
\end{lemma}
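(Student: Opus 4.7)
The statement is imported from \cite{bajwa2010gabor}, so in the paper itself no proof is reproduced, but the plan I would follow closely parallels the proof of Lemma~\ref{lemma:StOCa}, with the indexing adjusted from $i \in \Strue$ to $i \in \Struec$. First I would fix an arbitrary $i \in [[p]]$ and work on the event $\{i \in \Struec\}$; conditioned on this event the random support $\Strue$ is distributed uniformly over the $k$-subsets of $[[p]] \setminus \{i\}$. I would then reduce to the situation in which the \emph{multiset} of nonzero magnitudes $\{|\beta_j|\}_{j \in \Strue}$ is fixed and only its index assignment is random, which is permissible by a standard random-permutation argument exploiting the permutation-invariance of $\|\beta\|_2$ and of the conditional law of $\Strue$. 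The key scalar quantity to control is then $T_i := \sum_{j \in \Strue} X_i^\top \Xm_{j} \beta_j$.

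Second, I would split $T_i = \mathbb{E}[T_i \mid i \in \Struec] + \bigl(T_i - \mathbb{E}[T_i \mid i \in \Struec]\bigr)$ and handle the two pieces separately. The mean is a symmetry calculation: each $j \neq i$ is in $\Strue$ with probability $k/(p-1)$, and carries each magnitude value with equal probability, so $|\mathbb{E}[T_i \mid i \in \Struec]|$ reduces to a constant multiple of $\bigl|\sum_{j \neq i} X_i^\top \Xm_j\bigr|$ weighted by the nonzero entries of $\beta$; invoking the definition of average coherence $\nu$ together with Cauchy--Schwarz produces a bound of order $\sqrt{k}\,\nu\,\|\beta\|_2$, which accounts for the subtractive term $\sqrt{k}\nu$ that appears inside the square in the exponent.

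The main obstacle is concentration of $T_i$ around its mean, since $\Strue$ is drawn \emph{without} replacement and the summands are not independent. The standard remedy is a Hoeffding--Serfling-type tail bound (or an equivalent bounded-differences / decoupling argument tailored to sampling without replacement); the per-summand range bound $|X_i^\top \Xm_j \beta_j| \leq \mu |\beta_j|$ produces a sub-Gaussian tail of the form $\exp\bigl(-c(\epsilon - \sqrt{k}\nu)^2/\mu^2\bigr)$, and the parameter $a \geq 1$ enters through the trade-off constant $c = \bigl(8(2 + a^{-1})^2\bigr)^{-1}$. The denominator $8$ here (as opposed to $16$ in Lemma~\ref{lemma:StOCa}) reflects the absence of the self-interaction term $X_i^\top X_i = 1$, which would otherwise enlarge the effective range of the summands when $i$ lies \emph{inside} $\Strue$. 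Finally, a union bound over the $p-k$ choices of $i \in \Struec$ converts the per-index bound into the advertised $1 - 4(p-k)\exp(\cdots)$ probability.
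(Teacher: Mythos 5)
You correctly observe that the paper supplies no proof here: Lemma~\ref{lemma:StOCb} is imported as \cite[Lemma~4]{bajwa2010gabor}, and the surrounding text merely states that it (together with Lemma~\ref{lemma:StOCa}) is ``formally proved'' in that reference. Your outline faithfully mirrors the external argument---conditioning on $i \in \Struec$, reducing to a uniformly random assignment of the fixed nonzero magnitudes, bounding the conditional mean by $\sqrt{k}\,\nu\,\|\beta\|_2$ via the average coherence (whence the subtracted $\sqrt{k}\nu$ inside the exponent), concentrating around the mean with an Azuma/bounded-differences inequality for sampling without replacement whose increments are controlled by $\mu$ (which is where $a$ and the constraint $k \leq (1+a)^{-1}p$ enter), and union bounding over the $p-k$ indices in $\Struec$. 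The one inaccuracy is your explanation of the constant $8$ versus the $16$ of Lemma~\ref{lemma:StOCa}: the term $X_i^{\top} X_i$ is excluded from the sum in both lemmas, and the actual source of the discrepancy in \cite{bajwa2010gabor} is that conditioning on $\{i \in \Strue\}$ forces the martingale to also reveal and control the magnitude assigned to position $i$ itself, which worsens the martingale-difference bounds; this is a cosmetic point that does not affect the validity of your sketch.
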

Using a simple union bound with Lemma \ref{lemma:StOCa} and Lemma \ref{lemma:StOCb}, we have
 \begin{align*}
&\max \limits_{i\in \Strue} | \sum \limits_{\mathclap{\substack{j\in \Strue \\ j \neq i}}} X_i^{\top} \Xm_{j} \beta_j | \leq \epsilon \, \|\beta\|_2 \text{, and}
 \\
&\max \limits_{i\in \Struec} | \sum \limits_{\mathclap{\substack{j\in \Strue}}} X_i^{\top} \Xm_{j} \beta_j | \leq \epsilon \, \|\beta\|_2
 \end{align*}
with probability exceeding $1 - \delta$ where $\delta = 4p \exp( - \frac{(\epsilon - \sqrt{k} \nu)^2} {16(2+a^{-1})^2 \mu^2} )$. Fix $\epsilon = c_{\mu} \mu \sqrt{\log p}$ and $a=9$. Then, the claim is that $\delta \leq 4 p^{-1}$. Next, we will prove our claim. Before we can fix $\epsilon = c_{\mu} \mu \sqrt{\log p}$ and $a=9$, we need to ensure that the chosen values of $a$, $\epsilon$ and the allowed values of $k$ satisfy the assumptions in Lemma \ref{lemma:StOCa} and Lemma \ref{lemma:StOCb}. First, note that $\epsilon<1$ because of $\mu < \frac{1}{c_{\mu }\sqrt{\log p}}$. Second, $k \leq \frac{p}{1+a}$ because of $a=9$, $p \geq \max \{2n, \exp(5) \}$ and $k\leq \frac{n}{\log p}$. Third, and last, $k \leq \frac{\epsilon^2}{(9 \nu)^2}$ because of $\nu < \frac{\mu}{\sqrt{n}}$, $k\leq \frac{n}{\log p}$, $p\geq 2n$ and $c_{\mu}> 10 \sqrt{2}$. Finally, we use $\sqrt{k} \nu \leq \frac{\epsilon}{9}$ with $a=9$, $c_{\mu}> 10 \sqrt{2}$ and $\epsilon = c_{\mu} \mu \sqrt{\log p}$ to obtain
\begin{align*}
\exp( - \frac{(\epsilon - \sqrt{k} \nu)^2} {16(2+a^{-1})^2 \mu^2} ) \leq p^{-2}
\end{align*}
\noindent and thus $\delta \leq 4 p^{-1}$.\qed

\section{Proof of Lemma~\ref{lemma:subGa}}\label{app:proof_RandomMatrices.subGa}
Since $X_i := \frac{V_i}{\|V_i\|_2}$, we can write
\begin{align*}
\max \limits_{i\in \Strue} \big|\sum \limits_{\mathclap{\substack{j\in \Strue \\ j \neq i}}} X_i^{\top} \Xm_{j} \beta_j\big|
= \max \limits_{i\in \Strue} \big| \sum \limits_{\mathclap{\substack{j\in \Strue \\ j \neq i}}} \frac{V_i^\top}{\|V_i\|_2} \frac{V_j}{\|V_j\|_2} \beta_j \big| \text{.}
\end{align*}
We next fix an $i' \in \Strue$ and derive a probabilistic bound on $|\sum \limits_{\mathclap{\substack{j\in \Strue \\ j \neq i'}}} X_{i'}^{\top} \Xm_{j} \beta_j|$. This involves deriving both upper and lower probabilistic bounds on $\sum \limits_{\mathclap{\substack{j\in \Strue \\ j \neq i'}}} X_{i'}^{\top} \Xm_{j} \beta_j$ below in Step~1 and Step~2, respectively.

\noindent \textbf{Step 1 (Upper Bound):}
To provide an upper probabilistic bound on $\sum \limits_{\mathclap{\substack{j\in \Strue \\ j \neq i'}}} X_{i'}^{\top} \Xm_{j} \beta_j$, we first establish that $\|V_j\|_2 \geq \sqrt{\frac{n \sigma_j^2}{2}}$ for each $j\in\Strue$ with high probability in Step~1a and then we derive an upper probabilistic bound on $\sum \limits_{\mathclap{\substack{j\in \Strue \\ j \neq i'}}} \frac{V_{i'}^\top}{\|V_{i'}\|_2} \frac{\sqrt{2} V_j}{\sqrt{n \sigma_j^2}} \beta_j$ in Step~1b. We then combine these two steps in Step~1c for the final upper probabilistic bound on $\sum \limits_{\mathclap{\substack{j\in \Strue \\ j \neq i'}}} X_{i'}^{\top} \Xm_{j} \beta_j$.

\textbf{Step 1a:}
Note that
\begin{align}
\Pr \bigg[ \|V_j\|_2 < \sqrt{\frac{n \sigma_j^2}{2}} \bigg] \leq \exp \Big(- \frac{n}{8} \Big( \frac{\sigma_j}{4 b_j} \Big)^4 \Big)
\label{eq:subGLower}
\end{align}
for any $j\in \Strue$ \cite[eq.~(2.20)]{wainwright2015high}. Next, let $\mathcal{G}_{u,a}$ be the event that $\|V_j\|_2 \geq \sqrt{\frac{n \sigma_j^2}{2}}$ for all $j\in\Strue\setminus \{i'\}$. Then
\begin{align}
\Pr[\mathcal{G}_{u,a}^ c]
&= \Pr \bigg[ \bigcup \limits_{\mathclap{\substack{j\in \Strue \\ j \neq i'}}} \Big\{ \|V_j\|_2 < \sqrt{\frac{n \sigma_j^2}{2}} \Big\}  \bigg]
\leq \sum \limits_{\mathclap{\substack{j\in \Strue \\ j \neq i'}}} \Pr \bigg[ \|V_j\|_2 < \sqrt{\frac{n \sigma_j^2}{2}} \bigg]
\leq \sum \limits_{\mathclap{\substack{j\in \Strue \\ j \neq i'}}} \exp \Big(- \frac{n}{8} \Big( \frac{\sigma_j}{4 b_j} \Big)^4 \Big)
\nonumber\\
&\leq \sum \limits_{\mathclap{\substack{j\in \Strue \\ j \neq i'}}} \exp \Big(- \frac{n}{8} \Big( \frac{\sigma_*}{4 b_*} \Big)^4 \Big)
= (k-1) \exp \Big(- \frac{n}{8} \Big( \frac{\sigma_*}{4 b_*} \Big)^4 \Big).
\label{eq:BoundGua}
\end{align}

\textbf{Step 1b:}
Define
\begin{align*}
Y_{i'}:= \sum \limits_{\mathclap{\substack{j\in \Strue \\ j \neq i'}}}   \sqrt{\frac{2}{n \sigma_j^2}} \frac{V_{i'}^\top}{\|V_{i'}\|_2} V_j \beta_j,
\end{align*}
and let $\mathcal{G}_{u,b}$ be the event that $Y_{i'} \leq \sqrt{\frac{8 \log p}{n}} \big(\frac{b_*}{\sigma_*} \big) \|\beta\|_2$.
Then, the claim is that $\Pr(\mathcal{G}_{u,b}) \geq 1 - \frac{1}{p^2}$. In order to prove this claim, let us define another event $\Gubar:= \{ V_i' = v_{i'}\}$. Then, defining $u_{i'} := \frac{v_{i'}}{\|v_{i'}\|_2}$, we have
\begin{align}
M_{Y_{i'}} (\lambda | \, \Gubar)
&:= \mathbb{E}[\exp(\lambda Y_{i'}) | \, \Gubar]
= \mathbb{E}\Big[ \exp \Big( \lambda \sum \limits_{\mathclap{\substack{j\in \Strue \\ j \neq i'}}}  \sqrt{\frac{2}{n \sigma_j^2}} \frac{V_{i'}^\top}{\|V_{i'}\|_2} V_j \beta_j \Big) \big| \, \Gubar \Big]
= \mathbb{E}\Big[ \exp \Big( \lambda \sum \limits_{\mathclap{\substack{j\in \Strue \\ j \neq i'}}}  \sqrt{\frac{2}{n \sigma_j^2}} u_{i'}^{\top} V_j \beta_j \Big) \Big]
\nonumber\\
&= \mathbb{E}\Big[ \prod \limits_{\mathclap{\substack{j\in \Strue \\ j \neq i'}}} \exp \Big( \lambda \sqrt{\frac{2}{n \sigma_j^2}} u_{i'}^{\top} V_j \beta_j \Big)  \Big]
= \mathbb{E}\Big[ \prod \limits_{\mathclap{\substack{j\in \Strue \\ j \neq i'}}} \exp \Big( \lambda \sqrt{\frac{2}{n \sigma_j^2}} \sum \limits_{l=1}^{n} u_{{i'},l} V_{j,l} \beta_j \Big)  \Big]
\nonumber\\
&= \mathbb{E}\Big[ \prod \limits_{\mathclap{\substack{j\in \Strue \\ j \neq i'}}} \prod \limits_{l=1}^{n} \exp \Big( \lambda \sqrt{\frac{2}{n \sigma_j^2}} u_{{i'},l} V_{j,l} \beta_j \Big) \Big]
= \prod \limits_{\mathclap{\substack{j\in \Strue \\ j \neq i'}}} \prod \limits_{l=1}^{n} \mathbb{E}\Big[\exp \Big( \lambda \sqrt{\frac{2}{n \sigma_j^2}} u_{{i'},l} V_{j,l} \beta_j \Big) \Big]
\nonumber\\
&\leq \prod \limits_{\mathclap{\substack{j\in \Strue \\ j \neq i'}}} \prod \limits_{l=1}^{n} \exp \Big( \frac{\lambda^2}{n \sigma_j^2} u_{{i'},l}^2 b_j^2 \beta_j^2 \Big)
= \prod \limits_{\mathclap{\substack{j\in \Strue \\ j \neq i'}}} \exp \Big(  \frac{\lambda^2}{n \sigma_j^2} b_j^2 \beta_j^2 \sum \limits_{l=1}^{n} u_{{i'},l}^2 \Big)
= \prod \limits_{\mathclap{\substack{j\in \Strue \\ j \neq i'}}} \exp \Big( \frac{\lambda^2}{n \sigma_j^2} b_j^2 \beta_j^2 \Big)
\nonumber\\
&= \exp \Big( \frac{\lambda^2}{n} \sum \limits_{\mathclap{\substack{j\in \Strue \\ j \neq i'}}}  \frac{b_j^2 \beta_j^2}{\sigma_j^2} \Big)
\leq \exp \Big( \frac{\lambda^2}{n} \Big(\frac{b_*}{\sigma_*}\Big)^2 \sum \limits_{\mathclap{\substack{j\in \Strue \\ j \neq i'}}} \beta_j^2 \Big)
\leq \exp \Big( \frac{\lambda^2}{n} \Big(\frac{b_*}{\sigma_*}\Big)^2 \|\beta\|_2^2 \Big) \text{.}
\end{align}
By the Chernoff bound on $Y_{i'}$, we next obtain
\begin{align}
\label{eqn:app.proof.chernoff}
\Pr(Y_{i'} > a | \, \Gubar)
&\leq \min \limits_{\lambda>0} \exp(-\lambda a) M_{Y_{i'}} (\lambda | \, \Gubar)
\nonumber\\
&\leq \min \limits_{\lambda>0} \exp(-\lambda a) \exp \Big( \frac{\lambda^2}{n} \Big(\frac{b_*}{\sigma_*}\Big)^2 \|\beta\|_2^2 \Big)
\nonumber\\
&= \exp \Big( -\frac{1}{4} \Big(\frac{\sigma_*}{b_*}\Big)^2 \frac{a^2 \, n}{\|\beta\|_2^2} \Big).
\end{align}
Substituting $a=\sqrt{\frac{8 \log p}{n}} \Big(\frac{b_*}{\sigma_*}\Big) \|\beta\|_2$ in \eqref{eqn:app.proof.chernoff}, we obtain
\begin{align}
\Pr\left(Y_{i'} > \sqrt{ \frac{8 \log p}{n}} \Big(\frac{b_*}{\sigma_*}\Big) \|\beta\|_2 \Big| \, \Gubar\right) \leq \frac{1}{p^2}.
\end{align}
Finally, by the law of total probability, we obtain
\begin{align}
\Pr\left(Y_{i'} > \sqrt{ \frac{8 \log p}{n}} \Big(\frac{b_*}{\sigma_*}\Big) \|\beta\|_2\right) &= \mathbb{E}_{V_i'}\left[\Pr\left(Y_{i'} > \sqrt{ \frac{8 \log p}{n}} \Big(\frac{b_*}{\sigma_*}\Big) \|\beta\|_2 \Big| \, \Gubar\right)\right]\nonumber\\
&\leq \mathbb{E}_{V_i'}\left[\frac{1}{p^2}\right] = \frac{1}{p^2}.
\label{eq:BoundGub}
\end{align}
Thus, the event $\mathcal{G}_{u,b}$ holds with probability exceeding $1 - \frac{1}{p^2}$.

\textbf{Step 1c:}
Conditioning on $\mathcal{G}_{u,a} \cap \mathcal{G}_{u,b}$, we have from \eqref{eq:BoundGua} and \eqref{eq:BoundGub} that
\begin{align}
\sum \limits_{\mathclap{\substack{j\in \Strue \\ j \neq i'}}} X_{i'}^{\top} \Xm_{j} \beta_j
\leq \sqrt{ \frac{8 \log p}{n}} \Big(\frac{b_*}{\sigma_*}\Big) \|\beta\|_2.
\label{eq:BoundGu}
\end{align}
Further, note that
\begin{align*}
\Pr( \mathcal{G}_{u,a} \cap \mathcal{G}_{u,b} )
&\geq 1 - \Pr(\mathcal{G}_{u,a}^c) - \Pr(\mathcal{G}_{u,b}^c)
\nonumber\\
&\geq 1 - (k-1) \exp \Big(- \frac{n}{8} \Big( \frac{\sigma_*}{4 b_*} \Big)^4 \Big) - \frac{1}{p^2}
\nonumber\\
&\geqa 1 - \frac{(k-1)}{p^2} - \frac{1}{p^2} = 1 - \frac{k}{p^2},
\end{align*}
where (a) follows since $\log p \leq \frac{n}{16} \big( \frac{\sigma_*}{4 b_*} \big)^4$. Thus, \eqref{eq:BoundGu} holds with probability exceeding $1 - \frac{k}{p^2}$.

\noindent \textbf{Step 2 (Lower Bound):} Our claim in this step is that $\sum \limits_{\mathclap{\substack{j\in \Strue \\ j \neq i'}}} X_{i'}^{\top} \Xm_{j} \beta_j \geq - \sqrt{ \frac{8 \log p}{n}} \Big(\frac{b_*}{\sigma_*}\Big) \|\beta\|_2$ with probability exceeding $1 - \frac{k}{p^2}$. To establish this claim, notice that
\begin{align}
  \sum \limits_{\mathclap{\substack{j\in \Strue \\ j \neq i'}}} X_{i'}^{\top} \Xm_{j} \beta_j \geq - \sqrt{ \frac{8 \log p}{n}} \Big(\frac{b_*}{\sigma_*}\Big) \|\beta\|_2 \quad \Longleftrightarrow \quad - \sum \limits_{\mathclap{\substack{j\in \Strue \\ j \neq i'}}} X_{i'}^{\top} \Xm_{j} \beta_j \leq \sqrt{ \frac{8 \log p}{n}} \Big(\frac{b_*}{\sigma_*}\Big) \|\beta\|_2.
\end{align}
Further, we have $- \sum \limits_{\mathclap{\substack{j\in \Strue \\ j \neq i'}}} X_{i'}^{\top} \Xm_{j} \beta_j \equiv - \sum \limits_{\mathclap{\substack{j\in \Strue \\ j \neq i'}}} \frac{V_{i'}^\top}{\|V_{i'}\|_2} \frac{V_j}{\|V_j\|_2} \beta_j = \sum \limits_{\mathclap{\substack{j\in \Strue \\ j \neq i'}}} \frac{V_{i'}^\top}{\|V_{i'}\|_2} \frac{\widetilde{V}_j}{\|V_j\|_2} \beta_j$, where $\widetilde{V}_j := -V_j$ is still distributed as $V_j$ because of the symmetry of sub-Gaussian distributions. The claim now follows from a repetition of the analysis carried out in Step~1.

\noindent \textbf{Final Step:}
Step~1 and Step~2, along with the union bound, imply that
\begin{align*}
\big| \sum \limits_{\mathclap{\substack{j\in \Strue \\ j \neq i}}} X_{i'}^{\top} \Xm_{j} \beta_j \big|
\leq \sqrt{\frac{8 \log p}{n}} \Big(\frac{b_*}{\sigma_*} \Big) \|\beta\|_2
\end{align*}
with probability exceeding $1 - \frac{2k}{p^2}$. Next, notice that
\begin{align}
&\Pr \bigg[\max \limits_{i\in \Strue} \big| \sum \limits_{\mathclap{\substack{j\in \Strue \\ j \neq i}}} X_i^{\top} \Xm_{j} \beta_j \big|
\leq \sqrt{\frac{8 \log p}{n}} \Big(\frac{b_*}{\sigma_*} \Big) \|\beta\|_2 \, \bigg]
\nonumber\\
&= 1 -  \Pr \bigg[ \bigcup \limits_{i \in \Strue} \Big\{ \big| \sum \limits_{\mathclap{\substack{j\in \Strue \\ j \neq i}}} X_i^{\top} \Xm_{j} \beta_j \big| > \sqrt{\frac{8 \log p}{n}} \Big(\frac{b_*}{\sigma_*} \Big) \|\beta\|_2 \, \Big\}  \bigg]
\nonumber\\
&\geq 1 -  \sum \limits_{i \in \Strue} \Pr \bigg[ \big| \sum \limits_{\mathclap{\substack{j\in \Strue \\ j \neq i}}} X_i^{\top} \Xm_{j} \beta_j \big| > \sqrt{\frac{8 \log p}{n}} \Big(\frac{b_*}{\sigma_*} \Big) \|\beta\|_2 \, \bigg]
\nonumber\\
&\geq 1 -  \sum \limits_{i \in \Strue} \frac{2k}{p^2} = 1 -  \frac{2k^2}{p^2}.
\label{eq:BoundFinala}
\end{align}
This complete the proof of the lemma.\qed

\section{Proof of Lemma~\ref{lemma:subGb}}\label{app:proof_RandomMatrices.subGb}
Once again, notice that
\begin{align*}
\max \limits_{i\in \Strue^c} | \sum \limits_{\mathclap{\substack{j\in \Strue}}} X_i^{\top} \Xm_{j} \beta_j |
= \max \limits_{i\in \Strue^c} \Big| \sum \limits_{\mathclap{\substack{j\in \Strue}}} \frac{V_i^\top}{\|V_i\|_2} \frac{V_j}{\|V_j\|_2} \beta_j \Big| \text{.}
\end{align*}
We next fix an $i' \in \Struec$. Similar to the proof of Lemma~\ref{lemma:subGa}, the plan is to first derive a probabilistic bound on $|\sum \limits_{\mathclap{\substack{j\in \Strue}}} X_{i'}^{\top} \Xm_{j} \beta_j|$ and then use the union bound to provide a probabilistic bound on $\max \limits_{i\in \Struec} | \sum \limits_{\mathclap{\substack{j\in \Strue}}} X_i^{\top} \Xm_{j} \beta_j |$. Using steps similar to the ones in the proof of Lemma~\ref{lemma:subGa}, it is straightforward to establish that
\begin{align*}
\big| \sum \limits_{\mathclap{\substack{j\in \Strue}}} X_{i'}^{\top} \Xm_{j} \beta_j \big|
\leq \sqrt{\frac{8 \log p}{n}} \Big(\frac{b_*}{\sigma_*} \Big) \|\beta\|_2
\end{align*}
with probability exceeding $1 - \frac{2(k+1)}{p^2}$. The union bound finally gives us
\begin{align}
&\Pr \bigg[\max \limits_{i\in \Struec} \big| \sum \limits_{\mathclap{\substack{j\in \Strue}}} X_i^{\top} \Xm_{j} \beta_j \big|
\leq \sqrt{\frac{8 \log p}{n}} \Big(\frac{b_*}{\sigma_*} \Big) \|\beta\|_2 \, \bigg]
\nonumber\\
&= 1 -  \Pr \bigg[ \bigcup \limits_{i \in \Struec} \Big\{ \big| \sum \limits_{\mathclap{\substack{j\in \Strue}}} X_i^{\top} \Xm_{j} \beta_j \big| > \sqrt{\frac{8 \log p}{n}} \Big(\frac{b_*}{\sigma_*} \Big) \|\beta\|_2 \, \Big\}  \bigg]
\nonumber\\
&\geq 1 -  \sum \limits_{i \in \Struec} \Pr \bigg[ \big| \sum \limits_{\mathclap{\substack{j\in \Strue}}} X_i^{\top} \Xm_{j} \beta_j \big| > \sqrt{\frac{8 \log p}{n}} \Big(\frac{b_*}{\sigma_*} \Big) \|\beta\|_2 \, \bigg]
\nonumber\\
&\geq 1 -  \sum \limits_{i \in \Struec} \frac{2(k+1)}{p^2} = 1 -  \frac{2(k+1)(p-k)}{p^2}.
\label{eq:BoundFinala.2}
\end{align}
This completes the proof of the lemma.\qed

\end{appendices}



\begin{thebibliography}{10}
\providecommand{\url}[1]{#1}
\csname url@samestyle\endcsname
\providecommand{\newblock}{\relax}
\providecommand{\bibinfo}[2]{#2}
\providecommand{\BIBentrySTDinterwordspacing}{\spaceskip=0pt\relax}
\providecommand{\BIBentryALTinterwordstretchfactor}{4}
\providecommand{\BIBentryALTinterwordspacing}{\spaceskip=\fontdimen2\font plus
\BIBentryALTinterwordstretchfactor\fontdimen3\font minus
  \fontdimen4\font\relax}
\providecommand{\BIBforeignlanguage}[2]{{%
\expandafter\ifx\csname l@#1\endcsname\relax
\typeout{** WARNING: IEEEtran.bst: No hyphenation pattern has been}%
\typeout{** loaded for the language `#1'. Using the pattern for}%
\typeout{** the default language instead.}%
\else
\language=\csname l@#1\endcsname
\fi
#2}}
\providecommand{\BIBdecl}{\relax}
\BIBdecl

\bibitem{ahmed2017correlation}
T.~Ahmed and W.~U. Bajwa, ``Correlation-based ultrahigh-dimensional variable
  screening,'' in \emph{Proc. {IEEE} Int. Workshop Comput. Advances
  Multi-Sensor Adaptive Process. (CAMSAP)}, 2017.

\bibitem{tibshirani1996regression}
R.~Tibshirani, ``Regression shrinkage and selection via the lasso,'' \emph{J.
  Roy. Statist. Soc. B.}, vol.~58, no.~1, pp. 267--288, 1996.

\bibitem{donoho2003optimally}
D.~L. Donoho and M.~Elad, ``Optimally sparse representation in general
  (nonorthogonal) dictionaries via $\ell_1$ minimization,'' \emph{Proc. Natl.
  Acad. Sci.}, vol. 100, no.~5, pp. 2197--2202, 2003.

\bibitem{candes2008introduction}
E.~J. Cand{\`e}s and M.~B. Wakin, ``An introduction to compressive sampling,''
  \emph{IEEE Signal Processing Mag.}, vol.~25, no.~2, pp. 21--30, 2008.

\bibitem{gribonval2003sparse}
R.~Gribonval and M.~Nielsen, ``Sparse representations in unions of bases,''
  \emph{IEEE Trans. Inform. Theory}, vol.~49, no.~12, pp. 3320--3325, 2003.

\bibitem{cohen2009compressed}
A.~Cohen, W.~Dahmen, and R.~DeVore, ``Compressed sensing and best $k$-term
  approximation,'' \emph{J. Ameram. Math. Soc.}, vol.~22, no.~1, pp. 211--231,
  2009.

\bibitem{candes2006robust}
E.~J. Cand{\`e}s, J.~Romberg, and T.~Tao, ``Robust uncertainty principles:
  {E}xact signal reconstruction from highly incomplete frequency information,''
  \emph{IEEE Trans. Inform. Theory}, vol.~52, no.~2, pp. 489--509, 2006.

\bibitem{candes2008restricted}
E.~J. Candes, ``The restricted isometry property and its implications for
  compressed sensing,'' \emph{Comptes Rendus Mathematique}, vol. 346, no. 9-10,
  pp. 589--592, 2008.

\bibitem{golub1999molecular}
T.~R. Golub~et al., ``Molecular classification of cancer: {C}lass discovery and
  class prediction by gene expression monitoring,'' \emph{Science}, vol. 286,
  no. 5439, pp. 531--537, 1999.

\bibitem{huang2003linear}
X.~Huang and W.~Pan, ``Linear regression and two-class classification with gene
  expression data,'' \emph{Bioinformatics}, vol.~19, no.~16, pp. 2072--2078,
  2003.

\bibitem{helleputte2009partially}
T.~Helleputte and P.~Dupont, ``Partially supervised feature selection with
  regularized linear models,'' in \emph{Proc. 26th Int’l Conf. Machine
  Learning}, 2009, pp. 409--416.

\bibitem{stingo2011incorporating}
F.~C. Stingo, Y.~A. Chen, M.~G. Tadesse, and M.~Vannucci, ``Incorporating
  biological information into linear models: {A} {B}ayesian approach to the
  selection of pathways and genes,'' \emph{Ann. Appl. Stat.}, vol.~5, no.~3,
  2011.

\bibitem{yasui2003data}
Y.~Yasui~et al., ``A data-analytic strategy for protein biomarker discovery:
  {P}rofiling of high-dimensional proteomic data for cancer detection,''
  \emph{Biostatistics}, vol.~4, no.~3, pp. 449--463, 2003.

\bibitem{clarke2008properties}
R.~Clarke~et al., ``The properties of high-dimensional data spaces:
  {I}mplications for exploring gene and protein expression data,'' \emph{Nature
  Rev. Cancer}, vol.~8, no.~1, pp. 37--49, 2008.

\bibitem{nesvizhskii2010survey}
A.~I. Nesvizhskii, ``A survey of computational methods and error rate
  estimation procedures for peptide and protein identification in shotgun
  proteomics,'' \emph{J. Proteomics}, vol.~73, no.~11, pp. 2092--2123, 2010.

\bibitem{pang2008opinion}
B.~Pang and L.~Lee, ``Opinion mining and sentiment analysis,''
  \emph{Foundations and Trends in Information Retrieval}, vol.~2, no. 1--2, pp.
  1--135, 2008.

\bibitem{liu2012sentiment}
B.~Liu, ``Sentiment analysis and opinion mining,'' \emph{Synthesis {L}ectures
  on Human Language Technologies}, vol.~5, no.~1, pp. 1--167, 2012.

\bibitem{feldman2013techniques}
R.~Feldman, ``Techniques and applications for sentiment analysis,''
  \emph{Commun. ACM.}, vol.~56, no.~4, pp. 82--89, 2013.

\bibitem{landgrebe2002hyperspectral}
D.~Landgrebe, ``Hyperspectral image data analysis,'' \emph{IEEE Signal
  Processing Mag.}, vol.~19, no.~1, pp. 17--28, 2002.

\bibitem{plaza2009recent}
A.~Plaza~et al., ``Recent advances in techniques for hyperspectral image
  processing,'' \emph{Remote Sens. Environ.}, vol. 113, pp. S110--S122, 2009.

\bibitem{bioucas2012hyperspectral}
J.~Bioucas-Dias~et al., ``Hyperspectral unmixing overview: {G}eometrical,
  statistical, and sparse regression-based approaches,'' \emph{IEEE J. Select.
  Topics Appl. Earth Observ. Remote Sensing}, vol.~5, no.~2, pp. 354--379,
  2012.

\bibitem{james2013introduction}
G.~James, D.~Witten, T.~Hastie, and R.~Tibshirani, \emph{An Introduction to
  Statistical Learning}.\hskip 1em plus 0.5em minus 0.4em\relax Springer Texts
  in Statistics, 2013.

\bibitem{zou2005regularization}
H.~Zou and T.~Hastie, ``Regularization and variable selection via the elastic
  net,'' \emph{J. Roy. Statist. Soc.}, vol.~67, no.~2, pp. 301--320, 2005.

\bibitem{fu1998penalized}
W.~J. Fu, ``Penalized regressions: {T}he bridge versus the lasso,'' \emph{J.
  {C}omputnl Graph. {S}tatist.}, vol.~7, no.~3, pp. 397--416, 1998.

\bibitem{huang2008asymptotic}
J.~Huang, J.~L. Horowitz, and S.~Ma, ``Asymptotic properties of bridge
  estimators in sparse high-dimensional regression models,'' \emph{Ann. Stat.},
  vol.~36, no.~2, pp. 587--613, 2008.

\bibitem{zou2006adaptive}
H.~Zou, ``The adaptive lasso and its oracle properties,'' \emph{J. Am. Statist.
  Ass.}, vol. 101, no. 476, pp. 1418--1429, 2006.

\bibitem{yuan2006model}
M.~Yuan and Y.~Lin, ``Model selection and estimation in regression with grouped
  variables,'' \emph{J. Royal Statistical Soc. B}, vol.~68, no.~1, pp. 49--67,
  2006.

\bibitem{candes2007dantzig}
E.~Candes and T.~Tao, ``The {D}antzig selector: {S}tatistical estimation when
  $p$ is much larger than $n$,'' \emph{Ann. Stat.}, vol.~35, no.~6, pp.
  2313--2351, 2007.

\bibitem{donoho2000high}
D.~L. Donoho, ``High-dimensional data analysis: The curses and blessings of
  dimensionality,'' \emph{AMS Math Challenges Lecture}, vol.~1, p.~32, 2000.

\bibitem{donoho2006most}
------, ``For most large underdetermined systems of linear equations the
  minimal $\ell_1$-norm solution is also the sparsest solution,'' \emph{Commun.
  Pure Appl. Math.}, vol.~59, no.~6, pp. 797--829, 2006.

\bibitem{genovese2012}
C.~R. Genovese, J.~Jin, L.~Wasserman, and Z.~Yao, ``A comparison of the lasso
  and marginal regression,'' \emph{J. Machine Learning Res.}, vol.~13, pp.
  2107--2143, 2012.

\bibitem{fan2008sure}
J.~Fan and J.~Lv, ``Sure independence screening for ultrahigh dimensional
  feature space,'' \emph{J. Roy. Statist. Soc. B.}, vol.~70, no.~5, pp.
  849--911, 2008.

\bibitem{fan2009ultrahigh}
J.~Fan, R.~Samworth, and Y.~Wu, ``Ultrahigh dimensional feature selection:
  {B}eyond the linear model,'' \emph{J. Machine Learning Res.}, vol.~10, pp.
  2013--2038, 2009.

\bibitem{fan2010sure}
J.~Fan and R.~Song, ``Sure independence screening in generalized linear models
  with {NP}-dimensionality,'' \emph{Ann. Stat.}, vol.~38, no.~6, pp.
  3567--3604, 2010.

\bibitem{fan2011nonparametric}
J.~Fan, Y.~Feng, and R.~Song, ``Nonparametric independence screening in sparse
  ultra-high dimensional additive models,'' \emph{J. Am. Statist. Ass.}, vol.
  106, no. 494, pp. 544--557, 2011.

\bibitem{fan2014nonparametric}
J.~Fan, Y.~Ma, and W.~Dai, ``Nonparametric independence screening in sparse
  ultra-high dimensional varying coefficient models,'' \emph{J. Am. Statist.
  Ass.}, vol. 109, no. 507, pp. 1270--1284, 2014.

\bibitem{fan2010high}
J.~Fan, Y.~Feng, and Y.~Wu, ``High-dimensional variable selection for
  {C}ox{’}s proportional hazards model,'' in \emph{Borrowing Strength: Theory
  Powering Applications--A Festschrift for Lawrence D. Brown}.\hskip 1em plus
  0.5em minus 0.4em\relax Institute of Mathematical Statistics, 2010, pp.
  70--86.

\bibitem{zhao2012principled}
S.~D. Zhao and Y.~Li, ``Principled sure independence screening for {C}ox models
  with ultra-high-dimensional covariates,'' \emph{J. Multivariate Anal.}, vol.
  105, no.~1, pp. 397--411, 2012.

\bibitem{hall2009}
P.~Hall and H.~Miller, ``Using generalized correlation to effect variable
  selection in very high dimensional problems,'' \emph{J. {C}omputnl Graph.
  {S}tatist.}, vol.~18, no.~3, pp. 533--550, 2009.

\bibitem{li2012robust}
G.~Li, H.~Peng, J.~Zhang, and L.~Zhu, ``Robust rank correlation based
  screening,'' \emph{Ann. Stat.}, vol.~40, no.~3, pp. 1846--1877, 2012.

\bibitem{ghaoui2010safe}
L.~E. Ghaoui, V.~Viallon, and T.~Rabbani, ``Safe feature elimination for the
  lasso and sparse supervised learning problems,'' \emph{arXiv preprint
  arXiv:1009.4219}, 2010.

\bibitem{tibshirani2012strong}
R.~Tibshirani~et al., ``Strong rules for discarding predictors in lasso-type
  problems,'' \emph{J. Roy. Statist. Soc. B.}, vol.~74, no.~2, pp. 245--266,
  2012.

\bibitem{xiang2012fast}
Z.~J. Xiang and P.~J. Ramadge, ``Fast lasso screening tests based on
  correlations,'' in \emph{Proc. {IEEE} Int. {C}onf. on Acoustics Speech and
  Sig. {P}roc. (ICASSP)}, 2012, pp. 2137--2140.

\bibitem{dai2012ellipsoid}
L.~Dai and K.~Pelckmans, ``An ellipsoid based, two-stage screening test for
  {BPDN},'' in \emph{Proc. 20th Eur. Sig. Proc. Conf.}, 2012, pp. 654--658.

\bibitem{wang2013lasso}
J.~Wang, P.~Wonka, and J.~Ye, ``Lasso screening rules via dual polytope
  projection,'' \emph{J. Mach. Learn. Res.}, vol.~16, pp. 1063--1101, 2015.

\bibitem{xiang2014screening}
Z.~J. Xiang, Y.~Wang, and P.~J. Ramadge, ``Screening tests for lasso
  problems,'' \emph{IEEE Trans. Pattern Anal. Mach. Intell.}, vol.~39, no.~5,
  pp. 1008--1027, 2017.

\bibitem{wainwright2009sharp}
M.~J. Wainwright, ``Sharp thresholds for high-dimensional and noisy sparsity
  recovery using-constrained quadratic programming (lasso),'' \emph{IEEE Trans.
  Inf. Theory}, vol.~55, no.~5, pp. 2183--2202, 2009.

\bibitem{zhao2006model}
P.~Zhao and B.~Yu, ``On model selection consistency of lasso,'' \emph{J.
  Machine Learning Res.}, vol.~7, pp. 2541--2563, 2006.

\bibitem{raskutti2010restricted}
G.~Raskutti, M.~J. Wainwright, and B.~Yu, ``Restricted eigenvalue properties
  for correlated {G}aussian designs,'' \emph{J. Machine Learning Res.},
  vol.~11, pp. 2241--2259, 2010.

\bibitem{rudelson2009smallest}
M.~Rudelson and R.~Vershynin, ``Smallest singular value of a random rectangular
  matrix,'' \emph{Commun Pure Appl. Math.}, vol.~62, no.~12, pp. 1707--1739,
  2009.

\bibitem{bajwa2010gabor}
W.~U. Bajwa, R.~Calderbank, and S.~Jafarpour, ``Why {G}abor frames? {T}wo
  fundamental measures of coherence and their role in model selection,''
  \emph{J. Commun. Netw.}, vol.~12, no.~4, pp. 289--307, 2010.

\bibitem{davis1997adaptive}
G.~Davis, S.~Mallat, and M.~Avellaneda, ``Adaptive greedy approximations,''
  \emph{J. Construct. Approx.}, vol.~13, no.~1, pp. 57--98, 1997.

\bibitem{bajwa2012two}
W.~U. Bajwa, A.~Calderbank, and D.~G. Mixon, ``Two are better than one:
  {F}undamental parameters of frame coherence,'' \emph{Appl. Comput. Harmon.
  Anal.}, vol.~33, pp. 58--78, 2012.

\bibitem{welch1974lower}
L.~Welch, ``Lower bounds on the maximum cross correlation of signals,''
  \emph{IEEE Trans. Inform. Theory}, vol.~20, no.~3, pp. 397--399, 1974.

\bibitem{maas-EtAl:2011}
A.~L. Maas~et al., ``Learning word vectors for sentiment analysis,'' in
  \emph{Proc. 49th Ann. Meeting of the Assoc. for Computational Linguistics:
  {H}uman Language Technologies}, June 2011, pp. 142--150.

\bibitem{fan2008high}
J.~Fan and Y.~Fan, ``High dimensional classification using features annealed
  independence rules,'' \emph{Ann. Stat.}, vol.~36, no.~6, p. 2605, 2008.

\bibitem{fan2014challenges}
J.~Fan, F.~Han, and H.~Liu, ``Challenges of big data analysis,'' \emph{Nat.
  Sci. Rev.}, vol.~1, no.~2, pp. 293--314, 2014.

\bibitem{IRManning}
C.~D. Manning, P.~Raghavan, and H.~Schütze, \emph{Introduction to Information
  Retrieval}.\hskip 1em plus 0.5em minus 0.4em\relax Cambridge University
  Press, 2008.

\bibitem{wainwright2015high}
\BIBentryALTinterwordspacing
J.~Wainwright, ``High-dimensional statistics: A non-asymptotic viewpoint,''
  \emph{In preparation. University of California, Berkeley}, 2015. [Online].
  Available:
  \url{https://www.stat.berkeley.edu/~mjwain/stat210b/Chap2_TailBounds_Jan22_2015.pdf}
\BIBentrySTDinterwordspacing

\end{thebibliography}
\end{document}